
\documentclass[11pt]{amsart}

\setlength{\hoffset}{-2cm}
\setlength{\voffset}{-1.3cm}
\setlength{\textwidth}{15.75cm}
\setlength{\textheight}{22.275cm}

\usepackage{algorithm}
\usepackage{algorithmic}
\usepackage{amsfonts}
\usepackage{amscd}
\usepackage{color}
\usepackage{amsmath}
\usepackage{float}
\usepackage{amsthm}
\usepackage{amssymb}
\usepackage{mathrsfs}
\usepackage{amstext}
\usepackage{todonotes}
\usepackage{graphicx}
\usepackage{tikz}
\usepackage{verbatim}

\usepackage{xcolor}

\newcommand{\BIGOP}[1]{\mathop{\mathchoice%
		{\raise-0.22em\hbox{\huge $#1$}}%
		{\raise-0.05em\hbox{\Large $#1$}}{\hbox{\large $#1$}}{#1}}}

\newcommand{\BIGboxplus}{\mathop{\mathchoice%
		{\raise-0.35em\hbox{\huge $\boxplus$}}%
		{\raise-0.15em\hbox{\Large $\boxplus$}}{\hbox{\large $\boxplus$}}{\boxplus}}}
\numberwithin{equation}{section}

\theoremstyle{plain}
\newtheorem{satz}{Theorem}[section]
\newtheorem{defi}[satz]{Definition}
\newtheorem{cor}[satz]{Corollary}
\newtheorem{lem}[satz]{Lemma}
\newtheorem{prop}[satz]{Proposition}
\newtheorem{rem}[satz]{Remark}

\newcommand{\re}{\ensuremath{\mathbb{R}}}
\newcommand{\N}{\ensuremath{\mathbb{N}}}
\newcommand{\zz}{\ensuremath{\mathbb{Z}}}

\newcommand{\Z}{{\ensuremath{\zz}^d}}

\newcommand{\R}{\ensuremath{{\re}^d}}
\newcommand{\jb}{\boldsymbol{j}}

\newcommand{\x}{\boldsymbol{x}}

\newcommand{\kb}{\boldsymbol{k}}
\newcommand{\lb}{\boldsymbol{l}}
\newcommand{\nb}{\boldsymbol{n}}
\newcommand{\tb}{\boldsymbol{t}}
\newcommand{\Nb}{\boldsymbol{N}}
\newcommand{\Lb}{\boldsymbol{L}}
\newcommand{\Jb}{\boldsymbol{J}}
\newcommand{\Qb}{\boldsymbol{Q}}
\newcommand{\Sb}{\boldsymbol{S}}

\newcommand{\cf}{\ensuremath{\mathcal F}}

\newcommand{\Span}{{\rm span \, }}

\newcommand{\supp}{{\rm supp \, }}

\newcommand{\bproof}{\begin{proof}}
	\newcommand{\eproof}{\end{proof}}

\newlength{\fixboxwidth}
\setlength{\fixboxwidth}{\marginparwidth}
\addtolength{\fixboxwidth}{-0pt}

\newcommand{\be}{\begin{equation}}
\newcommand{\ee}{\end{equation}}
\newcommand{\beq}{\begin{eqnarray}}
\newcommand{\beqq}{\begin{eqnarray*}}
\newcommand{\eeq}{\end{eqnarray}}
\newcommand{\eeqq}{\end{eqnarray*}}

\title{}
\author{}

\begin{document}
	\allowdisplaybreaks
	
	\title
	[Adaptive sampling recovery]
	{Adaptive sampling recovery of functions with higher mixed regularity}

\author[Nadiia Derevianko]{Nadiia Derevianko$^{\lowercase{\rm a, b}}$}
\address{Fakult\"at f\"ur Mathematik\\ Technische Universit\"at  Chemnitz\\09107 Chemnitz, Germany, Institute of Mathematics of NAS of Ukraine, Tereshchenkivska st. 3, 01601 Kyiv-4, Ukraine}
\email{nadiia.derevianko@mathematik.tu-chemnitz.de}

\author[Tino Ullrich]{Tino Ullrich$^{\lowercase{\rm a}}$}
\address{Fakult\"at f\"ur Mathematik\\ Technische Universit\"at  Chemnitz\\09107 Chemnitz, Germany}
\email{tino.ullrich@mathematik.tu-chemnitz.de}

\thanks{$^{\lowercase{\rm a}}$Fakult\"at f\"ur Mathematik, Technische Universit\"at  Chemnitz, 09107 Chemnitz, Germany}

\thanks{$^{\lowercase{\rm b}}$Institute of Mathematics of NAS of Ukraine, Tereshchenkivska st. 3, 01601 Kyiv-4, Ukraine}

\begin{abstract}
We tensorize the Faber spline system from \cite{DU2019} to prove sequence space isomorphisms for multivariate function spaces with higher mixed regularity. The respective basis coefficients are local linear combinations of discrete function values similar as for the classical Faber Schauder system. This allows for a sparse representation of the function using a truncated series expansion by only storing discrete (finite) set of function values. The set of nodes where the function values are taken depends on the respective function in a non-linear way. Indeed, if we choose the basis functions adaptively it requires significantly less function values to represent the initial function up to accuracy $\varepsilon>0$ (say in $L_\infty$) compared to hyperbolic cross projections. In addition, due to the higher regularity of the Faber splines we overcome the (mixed) smoothness restriction $r <2$ and benefit from higher mixed regularity of the function. As a byproduct we present the solution of Problem 3.13 in Triebel's monograph \cite{Tr2012} for the multivariate setting.
\end{abstract}

\maketitle

\section{Introduction}
This paper is a continuation of the research that was started in \cite{DU2019}, where we constructed a higher order Faber spline basis for the representation of univariate functions from Besov-Lizorkin-Triebel spaces. Compared to wavelet isomorphisms, the coefficient functionals in a Faber spline series expansion are given by a (local) linear combination of discrete function values. In the monographs \cite{Tr2010, Tr2012} Triebel used tensor products of the classical Faber-Schauder basis to obtain characterization for bivariate functions. Due to the limited regularity of the hat functions one is not able to benefit from higher smoothness of the respective functions. This leads to a saturation rate $n^{-2}$ for the approximation error. Here we present the corresponding modifications for the multivariate situation to resolve functions with higher mixed Sobolev-Besov regularity. The characterizations are used to sparsely represent a multivariate function by a truncated tensorized Faber spline series. As already observed by Byrenheid in \cite{Glen2018} one needs to store significantly less discrete information of the function compared to a hyperbolic cross projection. In fact, to benefit from a non-linear (adaptive) approximation framework we also need to establish characterizations in the quasi-Banach setting. It is well-known (see for instance \cite{Pie80, DeVore1998, TW09, HS2010, HS2011, HS2012}) that Besov spaces with mixed smoothness $S^r_{p,p}B$, where $p<1$, serve as nonlinear approximation spaces in $L_2$ with respect to a hyperbolic  wavelet dictionary since the spaces are isomorphic to $\ell_p$ if $r = 1/p-1/2$.

In our paper \cite{DU2019} we study this problem for the univariate situation.  We construct a higher order Faber spline basis that allows to get sampling discretizations of Besov function spaces $B^r_{p,\theta}$ with higher smoothness $1/p<r<\min\{2m-1+1/p,2m\}$, $m \in \N$ and $m\geq 2$ (see Subsection 2.1 for the definition of this basis). The main tool for the construction of higher order Faber splines are the celebrated Chui-Wang wavelets \cite{Chui1992}. They serve as the initial point. We apply a ``lifting procedure'' to the dual Chui-Wang wavelet via a certain integral operator similar as the Faber-Schauder system evolves from integrating the Haar system. With this technique we are able to generate Faber splines of arbitrary high (but limited) smoothness. Although the new basis functions are supported on the real line they are very well localized (exponentially decaying) and the main parts are concentrated on a segment. Note also that for the periodic case very well time-localized (polynomial decay) basis functions were constructed in \cite{PS2001} and \cite{DMP2019} for one- and two-dimensional cases. The corresponding characterization of Besov spaces (for the univariate case so far) was obtained in \cite{DMP2017}.

The sampling characterization of mixed Besov spaces via Faber-Schauder coefficients for dimension $d>1$ has been established in \cite{Glen2018}. It is known that coefficients in a series expansion with respect to the tensor product Faber-Schauder basis are represented via second order mixed difference  $$d_{\boldsymbol{j},\boldsymbol{k}}:=
\Big(-\frac{1}{2}\Big)^{|e(\jb)|}\Delta^{2,e(\jb)}_{2^{-(\jb+1)}}f(\x_{\jb,\kb})\quad,\quad \boldsymbol{j}\in \N_{-1}^d, \boldsymbol{k}\in \Z\,,$$
where we put $e(\boldsymbol{j}):=\{i\in [d]: \, j_i\geq 0\}$ for $\boldsymbol{j}\in \N_{-1}^d$ and $\Delta^{2,e(\jb)}_{2^{-(\jb+1)}}f:= \prod_{i \in e(\jb)}
	\Delta^{2,i}_{2^{-j_i-1}}\,,$
where the latter denotes the second order difference operator which is applied to the $i$-th variable of the function $f$.

The interesting question was if this extends to a higher order framework. This question is answered in this paper. Indeed, if we tensorize the Faber splines from \cite{DU2019} we obtain a linear combination of $2m$-th order mixed differences ($m \geq 1$)
$$
\lambda_{2m;\jb,\kb}(f)=\sum\limits_{\boldsymbol{l}\in \iota_m} \Big( \prod\limits_{i\in e(\jb)} (-1)^{l_i} \Big) N_{2m}(\boldsymbol{l}+\boldsymbol{1}) \Delta^{2m,e(\boldsymbol{j})}_{2^{-\boldsymbol{j}-1}}f(\boldsymbol{x}_{\boldsymbol{j};\boldsymbol{k},\boldsymbol{l}}) \quad,\quad \jb \in \N_{-1}^d, \kb \in \Z\,,
$$
where $\iota_m:=\{\boldsymbol{l}\in \Z: \, l_i=0, \ldots, 2m-2, \, i=1,\ldots,d \}$ and $\boldsymbol{N}_m$ is the $m$-th order tensorized $B$-spline (see Section 2 for details) and $\boldsymbol{x}_{\jb;\boldsymbol{k},\boldsymbol{l}}:=(x_{j_1;k_1,l_1},...,x_{j_d;k_d,l_d})$,
where
$$
x_{j;k,l}:=
\begin{cases}
k, & j=-1,\\
\frac{2k+l}{2^{j+1}}, & j\in \N_0.
\end{cases}
$$

 In the case of approximation by linear sampling algorithms such as for example Smolyak sparse grid operators the set of samples is fixed in advance for the whole class of functions. In this paper we consider adaptive sampling approximation where the set of sample points is chosen for each particular function differently. Namely, we work with the quantity of best $n$-term approximation with respect to this new Faber spline basis. A detailed overview on the history of this subject with regard to different dictionaries is given in \cite[Chapt.\ 10]{DTU-book}. We mention here only some papers of Hansen and Sickel \cite{HS2010}, \cite{HS2011}, \cite{HS2012}, D. D\~ung \cite{Dung2000, Dung2001}, Temlyakov \cite{Temlyakov1998, Temlyakov2000}, V.~Romanyuk \cite{Romanyuk2016}, \cite{Roman2016} and Bazarkhanov \cite{Baz2014}, where the authors considered wavelet type dictionaries. Best $n$-term trigonometric approximation was studied by Temlyakov \cite{Tem2015}, \cite{Tem2017}, A.~Romanyuk \cite{Rom1993}, \cite{Rom1995}, \cite{Rom2003}, \cite{Rom2007}, A.~Romanyuk and V.~Romanyuk \cite{RR2010}, Stasyuk \cite{Stas2014}, \cite{Stas2016} and others. Probably closest to us are the papers D. D\~ung \cite{DD2009,DD2013}, where the univariate and isotropic framework is considered. Here we consider classes of multivariate functions with mixed smoothness.

We study best $n$-term approximation with respect to the tensorized Faber spline basis $\mathcal{B}^d_{2m}$ constructed in this paper. The main advantage over classical wavelet dictionaries is the fact that the aggregate of best $n$-term approximation is constructed by using only finitely many discrete function values rather than wavelet inner products. The following order estimates are obtained below for function space embedding on a compact $K \subset \R$. In the so-called case of ``small smoothness'' we obtain
$$
		\sigma_n(S_{p,\theta}^rB(K),\mathcal{B}_{2m}^d)_{q}\lesssim n^{-r}
$$
for $0<p< q<\infty$ and smoothness $1/p<r<\min\{1/\theta-1/\min\{1,q\},2m\}$, whereas in the case of ``large smoothness''  $\max\{1/p,1/\theta-1/\max\{q,1\}\}<r<2m$ we find
$$
 \sigma_n(S_{p,\theta}^rB(K),\mathcal{B}_{2m}^d)_{q}\lesssim n^{-r} (\log n)^{(d-1)(r-1/\theta+1)}.
$$
Note that in case of wavelet type dictionaries and $1<q<\infty$ there is a slight improvement in the power of the $\log$, namely $r-1/\theta+1/2$.
These results complement the corresponding results for the Faber-Schauder system, see \cite{Glen2018}. Similarly to the Faber-Schauder case the estimates for best $n$-term approximation are realized by a so-called level-wise greedy algorithm that uses $C(d,m)n$  coefficients. Due to the construction the number of function values that have to be stored scales similarly.

\textbf{Outline.} This paper has the following structure. In Subsection 2.1 we give the definition of the 1D higher order Faber spline basis that was constructed in \cite{DU2019}. In Subsections 2.2 and 2.3 we define the corresponding basis for the multivariate setting and prove uniform convergence in $C_0(\R)$.  In Section 3 we prove the sampling characterization for Besov-Triebel-Lizorkin spaces of mixed smoothness. The main theorem is formulated in Subsection 3.4. In Section 2 we prove the sampling characterization of Besov-Triebel-Lizorkin spaces of mixed smoothness. The main theorem is formulated in Subsection 2.4. In Subsection 2.1 we give definition of 1D higher order Faber spline basis that was constructed in \cite{DU2019}.  In Section 4 we consider best $n$-term approximation of Besov-Triebel-Lizorkin spaces of with respect to the higher order Faber spline basis. The main results are presented in Subsections 4.1 and 4.2. The short Subsection 4.3 refers to the constructiveness of the algorithm we use.  Finally, definitions of functions spaces and some auxiliary results from harmonic analysis are put to the Appendix A. Examples of the construction of higher order Faber splines for 1D are presented in Appendix~B.

\textbf{Notation.} As usual $\N$, $\zz$ and $\re$ are reserved for natural, integer and real numbers. Then let $\N_0:=\N\cup \{0\}$, $\N_{-1}:=\N\cup \{0,-1\}$, $\zz_+:=\{k\in \zz: k\geq0\}$ and $\re_+:=\{x\in \re: x\geq0\}$. With letter $d$ we indicate dimension of spaces $\N^d$, $\Z$, $\R$ and etc. By symbols $\x$, $\tb$ we denote elements from vector spaces $\N^d$, $\Z$, $\R$. $|\x|$ means the following vector $|\x|=(|x_1|,\ldots,|x_d|)$ and by $\x\cdot \tb$ we denote a scaling product of two vectors $\x$ and $\tb$, i.e. $\x\cdot \tb=x_1t_1+\ldots+x_dt_d$. By $|\boldsymbol{x}|_1$ we mean $|\boldsymbol{x}|_1=x_1+\ldots+x_d$. For $a\in\re$ we denote by $a_+$ the number $a_+:=\max\{a,0\}$. For two non negative quantities $a$ and $b$ we write $a \lesssim b$ if there exists a positive constant $c$ that does not depend on one of the parameters known from the context such that $a\leq c \, b$. We write $a\asymp b$ if $a \lesssim b$ and $b \lesssim a$. Let $C(\R)$ be the space of continuous functions on $\R$ with the usual supremum norm, $C_0(\R)$ be the space of compactly supported continuous functions on $\R$. By $L_p(\R)$, $0<p\leq \infty$ as usual we denote the space of Lebesgue measurable functions with the finite norm
$$
\|f\|_p:=
\begin{cases}
\Big(\int\limits_{\R} |f(\x)|^p d\x \Big)^{1/p}, & 0<p<\infty, \\
\mathrm{ess}\sup\limits_{\x \in \R} |f(\x)|, & p=\infty.
\end{cases}
$$

\section{A higher order tensorized Faber spline basis}

In this section we present construction of multivariate higher order  Faber splines and prove sampling discretization of Besov and Lizorkin-Triebel spaces of functions with mixed smoothness.

First we give definition of this new basis for the univariate case. The main ideas of the construction for 1D are presented in \cite{DU2019} and further we give only necessary definitions.

\subsection{Definition of univariate basis functions}
Let $N_m$, $m \in \N$, be the $m$-th order B-spline with knots at $\zz$ defined by
$$
N_m(x)=(N_{m-1}*N_1)(x)=\int \limits_0^1 N_{m-1}(x-t) dt,
$$
where $N_1=\mathcal{X}_{[0,1)}$.
Let further $j,k \in \zz$ and $N_{m;j,k}:=N_m(2^j \cdot-k)$. It is well-known that the system
$$
V_j:=\text{span}\{N_{m;j,k}: k \in \zz \}.
$$
constitutes a multiresolution analysis of $L_2(\re)$.
In \cite{Chui1992} it was also proved that there is a compactly supported wavelet that generates the wavelet spaces $W_j$, usually defined as $W_j=V_{j+1}\ominus V_j, \, j \in \zz$, represented by the formula
$$
\psi_m(x)=\dfrac{1}{2^{m-1}}\sum\limits_{l=0}^{2m-2}(-1)^lN_{2m}(l+1)N_{2m}^{(m)}(2x-l).
$$
It means that $W_j=\mathrm{span}\{\psi_m(2^{j}\cdot -k), \, k \in \zz\}$. It is clear that $\supp N_m=[0,m]$ and $\supp \psi_m=[0,2m-1]$.

By $\psi_m^*$ we denote the dual wavelet of $\psi_m$, and by $N_m^*$ dual of $N_m$.
In \cite{DU2019} we prove an explicit representation for $\psi_m^*$.

\begin{satz} \cite{DU2019} \label{dual-wav-repr}
	The dual wavelet $\psi_m^*$ can be represented as
	$$
	\psi_m^*(x)=\sum\limits_{n\in \zz} a_n^{(m)}\psi_m(x-n),
	$$
	where the coefficients $a_n^{(m)}$ are exponentially decaying and can be computed precisely by \cite[Algorithm 1]{DU2019} (see also the Appendix for $m=2$ and $m=3$).
\end{satz}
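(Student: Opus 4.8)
The plan is to exploit the fact that the Chui--Wang wavelet $\psi_m$ is \emph{semi-orthogonal}: although the integer translates $\{\psi_m(\cdot-k)\}_{k\in\zz}$ are not orthonormal, the wavelet spaces are mutually orthogonal, $W_j\perp W_{j'}$ for $j\neq j'$. Consequently the biorthogonal dual system $\{\psi_m^*(\cdot-k)\}_{k\in\zz}$, characterised by $\langle\psi_m^*(\cdot-k),\psi_m(\cdot-l)\rangle=\delta_{k,l}$ together with orthogonality to all other scales, must itself lie in $W_0=\mathrm{span}\{\psi_m(\cdot-k):k\in\zz\}$. Since $\{\psi_m(\cdot-k)\}$ is a Riesz basis of $W_0$, its dual Riesz basis lives in the same space, and by shift-invariance it is generated by integer translates of a single function $\psi_m^*$. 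This forces the ansatz $\psi_m^*(x)=\sum_{n\in\zz}a_n^{(m)}\psi_m(x-n)$, and it remains to pin down the coefficients and establish their decay.

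First I would determine the $a_n^{(m)}$ from biorthogonality. Inserting the ansatz into $\langle\psi_m^*,\psi_m(\cdot-l)\rangle=\delta_{0,l}$ yields the discrete convolution equation
$$\sum_{n\in\zz}a_n^{(m)}\,g_{l-n}=\delta_{0,l},\qquad l\in\zz,$$
where $g_n:=\langle\psi_m,\psi_m(\cdot-n)\rangle$ is the autocorrelation sequence of $\psi_m$. Because $\supp\psi_m=[0,2m-1]$ this sequence is finitely supported, and the $g_n$ can be computed in closed form from the $B$-spline representation of $\psi_m$: derivatives of $B$-splines are finite differences, and translate-autocorrelations of $B$-splines evaluate to $B$-spline values at the integers (for instance $\langle N_{2m},N_{2m}(\cdot-n)\rangle=N_{4m}(2m+n)$). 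Passing to the symbol $G(\omega):=\sum_n g_n e^{-in\omega}$ on the torus, the convolution equation becomes $A(\omega)G(\omega)=1$, so the coefficients $a_n^{(m)}$ are precisely the Fourier coefficients of $1/G$.

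I would then establish the decay by analyticity. Since $\{\psi_m(\cdot-k)\}$ is a Riesz basis of $W_0$, the trigonometric polynomial $G$ is bounded below by a positive constant; substituting $z=e^{-i\omega}$ turns $G$ into an Euler--Frobenius type Laurent polynomial with no zeros on the unit circle, whose reciprocal is therefore holomorphic on an annulus $\rho<|z|<\rho^{-1}$ for some $\rho<1$. The Laurent coefficients of such a function decay like $\rho^{|n|}$, which is the asserted exponential decay. Concretely, a partial-fraction decomposition of $1/G$ with respect to the simple roots of the Euler--Frobenius polynomial produces two one-sided geometric series in these roots, and extracting their coefficients is exactly the content of \cite[Algorithm 1]{DU2019}; the values for $m=2,3$ listed in the Appendix arise this way.

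The main effort is not the existence of the representation, which is essentially forced by semi-orthogonality, but the effective and numerically stable computation of the coefficients: one must locate the roots of the Euler--Frobenius polynomial, which control the exact decay rate $\rho$, and carry out the partial-fraction/convolution-inverse step so that the two-sided sequence $(a_n^{(m)})$ is produced to arbitrary accuracy. This root-finding and inversion is the delicate part that Algorithm 1 handles.
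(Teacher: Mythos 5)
Your proposal is correct and follows essentially the same route as the cited proof in \cite{DU2019}: semi-orthogonality places $\psi_m^*$ in $W_0$, biorthogonality yields the Toeplitz/convolution equation against the autocorrelation sequence of $\psi_m$, and inverting the resulting Euler--Frobenius-type Laurent polynomial via its roots (which come in reciprocal pairs off the unit circle) gives both the exponential decay and the partial-fraction formula that is exactly \cite[Algorithm 1, Formulas (6.2), (6.4)]{DU2019}. Indeed, the explicit roots and the two-sided geometric expressions for $a_n^{(2)}$, $a_n^{(3)}$ in Appendix B of the present paper are precisely the output of the partial-fraction step you describe.
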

Further we define the cardinal spline function
\begin{equation}\label{card-sp}
L^m(x):=\sum \limits_{n \in \zz} c_n^{(m)}N_m(x+m/2-n).
\end{equation}
with precisely given exponentially decaying coefficients (see \cite{Chuibook} for details). By using the $m$-th order biorthogonal Chui-Wang wavelets and Theorem \ref{dual-wav-repr} we define the following $2m$-th order Faber spline basis for $j \in \N_0$ as
\begin{align}
s_{2m;j,k}(x)&=2^{mj}\int\limits_{-\infty}^x \frac{\psi_{m;j,k}^*(t)}{(m-1)!}(x-t)^{m-1} dt \notag\\
&=2^{mj} \sum \limits_{n \in \zz} a_n^{(m)} \int\limits_{-\infty}^x \frac{\psi_{m;j,k}(t)}{(m-1)!}(x-t)^{m-1} dt \label{b-basis}
\end{align}
and $s_{2m;-1,k}(x):=L^{2m}(x-k)$. Plotted examples of tensorized versions of these basis functions $s_{4;0,0}$ and $s_{6;0,0}$ are given in Figures \ref{b-basis-b2} and \ref{b-basis-b6}. For comparison the Faber-Schauder basis function is presented on Figure \ref{b-basis0}.

We would also like to mention a related univariate  construction in the papers \cite{Wang96, Wang95}. For further
details in this regard see \cite[Remark 3.6]{DU2019}.

\begin{figure}[h!]
	\center{\includegraphics[width=0.55\linewidth]{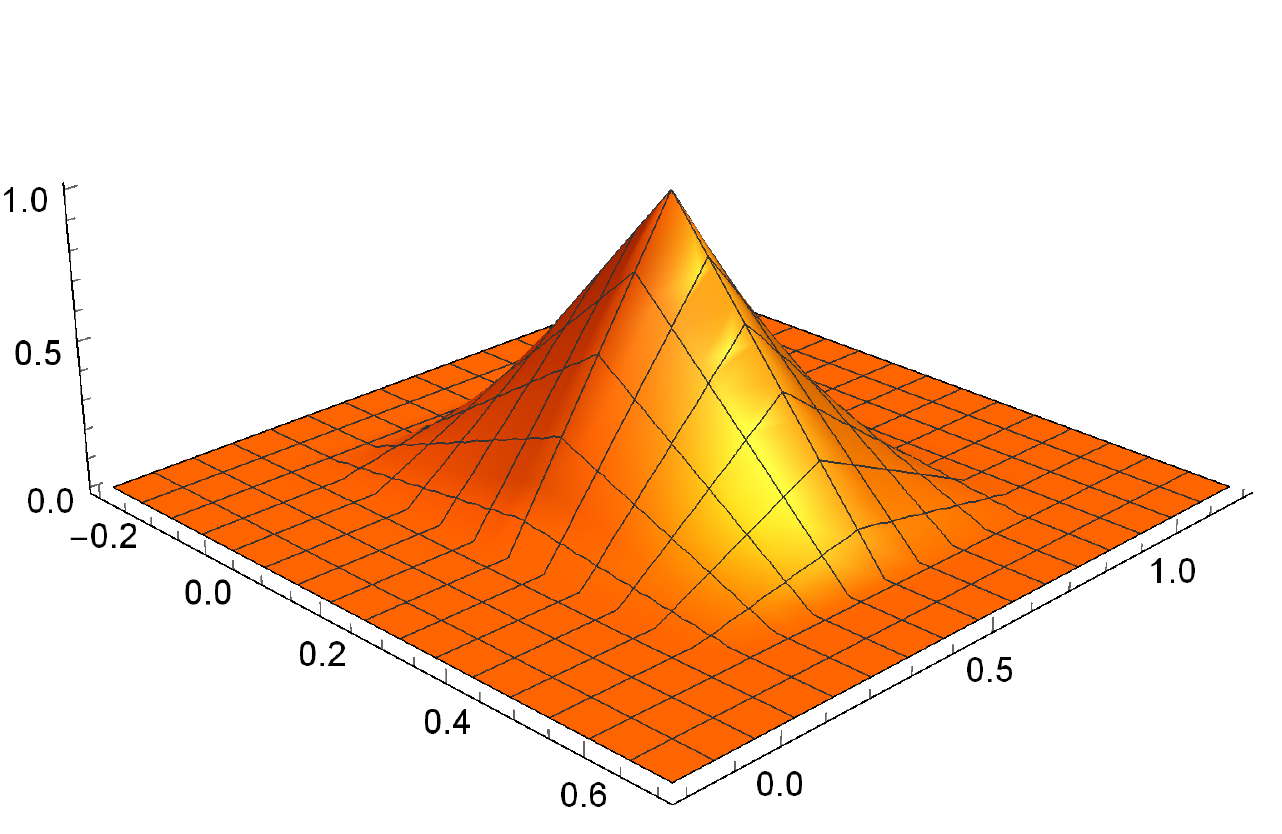}}
	\caption{The Faber-Schauder basis function with the dilation index $\boldsymbol{j}=(4,0)$.}
	\label{b-basis0}
\end{figure}


\subsection{Construction of multivariate basis functions}
 For $f \in C_0(\re)$ the following series converges uniformly
$$
f=\sum\limits_{j\in\N_{-1}}\sum\limits_{k\in \zz} \lambda_{2m;j,k}(f) s_{2m;j,k},
$$
where the coefficients $\lambda_{2m;j,k}(f)$ are defined as follows $\lambda^{(m)}_{-1,k}(f):=f(k)$ and for $j\in \N_0$
$$
\lambda_{2m;j,k}(f)=\sum\limits_{l=0}^{2m-2}(-1)^{l}N_{2m}(l+1)\Delta_{2^{-j-1}}^{2m} f\Big( \frac{2k+l}{2^{j+1}}\Big),
$$
where by $\Delta_{h}^{2m}f(x)$ we denote a $2m$-th order difference of a function $f$ with step $h$ at the point~$x$ (see Appendix A.3 for a definition).

Now let a function $f \in C_0(\R)$. Recall that for $\boldsymbol{j}\in \N_{-1}^d$, $\boldsymbol{k} \in \Z$, $e(\boldsymbol{j}):=\{i\in [d]: \, j_i\geq 0\}$. For $m\in \N$ we define a set $\iota_m:=\{\boldsymbol{l}\in \Z: \, l_i=0, \ldots, 2m-2, \, i=1,\ldots,d \}$ and let $\boldsymbol{x}_{\boldsymbol{j};\boldsymbol{k},\boldsymbol{l}}=(x_{j_1;k_1,l_1},...,x_{j_d;k_d,l_d})$,
where
$$
x_{j;k,l}:=
\begin{cases}
k, & j=-1,\\
\frac{2k+l}{2^{j+1}}, & j\in \N_0.
\end{cases}
$$
We fixed variables $x_2,...,x_d$ and consider the expansion with respect to a variable $x_1$
\begin{align*}
f(x_1,...,x_d)&=\sum\limits_{k_1\in \zz} f(k_1,x_2,...,x_d) s_{2m;-1,k_1}(x_1) \\
&+\sum\limits_{j_1\in\N_{0}}\sum\limits_{k_1\in \zz} \left(\sum\limits_{l=0}^{2m-2}(-1)^{l}N_{2m}(l+1)\Delta_{2^{-j_1-1}}^{2m} f\Big( \frac{2k_1+l}{2^{j_1+1}},x_2,...,x_d\Big)\right) s_{2m;j_1,k_1}(x_1).
\end{align*}
Further we expand functions $f(k_1,x_2,...,x_d)$ and $f\Big( \frac{2k_1+l}{2^{j_1+1}},x_2,...,x_d\Big)$ with respect to a variable $x_2$. Proceeding in this way for each variable we obtain the following pointwise expansion
\begin{equation}\label{exp-mul}
f=\sum\limits_{\boldsymbol{j}\in\N_{-1}^d}\sum\limits_{\boldsymbol{k} \in \Z} \lambda_{2m;\jb,\kb}(f) \boldsymbol{s}_{2m;\jb,\kb},
\end{equation}
where basis functions $\boldsymbol{s}_{2m;\jb,\kb}$ are defined as follows
$$
\boldsymbol{s}_{2m;\jb,\kb}(\boldsymbol{x})=s_{2m;j_1,k_1}(x_1)\cdot ... \cdot s_{2m;j_d,k_d}(x_d),
$$
and coefficients
$$
\lambda_{2m;\jb,\kb}(f)=\sum\limits_{\boldsymbol{l}\in \iota_m} \Big( \prod\limits_{i\in e(\jb)} (-1)^{l_i} \Big) N_{2m}(\boldsymbol{l}+\boldsymbol{1}) \Delta^{2m,e(\boldsymbol{j})}_{2^{-\boldsymbol{j}-1}}f(\boldsymbol{x}_{\boldsymbol{j};\boldsymbol{k},\boldsymbol{l}}),
$$
where $\Nb_{2m}(\boldsymbol{l}+\boldsymbol{1})=\prod\limits_{i=1}^d N_{2m}(l_i+1)$ and $\Delta^{m,e}_{\boldsymbol{h}}$ is $2m$-th order mixed difference operator acting in the directions contained in $e$ (see Appendix A.3 for definition). Further we will use the following notation for the sequence of coefficients $\lambda^{(m)}(f):=\{\lambda_{2m;\jb,\kb}(f): \, \boldsymbol{j}\in \N_{-1}^d$, $\boldsymbol{k} \in \Z \}$.

\subsection{Uniform convergence in the space $C_0(\R)$}
Further we show that series (\ref{exp-mul}) converges uniformly. First we give some necessary definitions. The univariate cardinal spline function is defined by (\ref{card-sp}).
Then for dimension $d>1$,
$$
\Lb^m(\boldsymbol{x})=\prod\limits_{i=1}^d L^m(x_i), \ \ \ \boldsymbol{x}=(x_1,\ldots,x_d),
$$
or we can also write
$$
\Lb^m(\boldsymbol{x})=\sum \limits_{\boldsymbol{n} \in \Z} c_{\boldsymbol{n}}^{(m)}\Nb_m(\boldsymbol{x}+\boldsymbol{m}/2-\boldsymbol{n}),
$$
where $c_{\boldsymbol{n}}^{(m)}=\prod\limits_{i=1}^d c_{n_i}^{(m)}$ and $\boldsymbol{m}/2=(m/2,...,m/2)$. It is clear that
 $\Lb^m(\boldsymbol{j})=\delta_{\boldsymbol{j},0}$ for $\boldsymbol{j}\in \Z$.
Further we define the fundamental spline interpolating operator $\Jb^m$ as
$$
(\Jb^mf)(\boldsymbol{x}):=\sum \limits_{\boldsymbol{n} \in \Z} f(\boldsymbol{n})\Lb^m(\boldsymbol{x}-\boldsymbol{n})
$$
and its scaled version $\Jb^m_N$, $N \in \N$, as
$$
(\Jb^m_Nf)(\boldsymbol{x})=\sum \limits_{\boldsymbol{n} \in \Z} f(2^{-N} n_1,...,2^{-N} n_d) \Lb^m(2^N \boldsymbol{x}-\boldsymbol{n})
$$
with the following interpolating property $(\Jb^m_Nf)(\boldsymbol{n}/2^N)=f(\boldsymbol{n}/2^N)$ for $\boldsymbol{n}\in \Z$.

Let $f_i$ be a univariate function defined as
\begin{equation} \label{1d}
 f_i(x_i)=\sum \limits_{n_i \in \zz} a_{n_i} N_m(2^Nx_i-n_i), \ \ \{a_{n_i}\}_{n_i\in \zz} \in l_1.
\end{equation}
By $V_N^m$ we denote the space of multivariate functions of the form
$$
f(\boldsymbol{x})=\prod\limits_{i=1}^d f_i(x_i),
$$
where $f_i$ is defined by (\ref{1d}).

\begin{figure}[h!]
	\center{\includegraphics[width=0.55\linewidth]{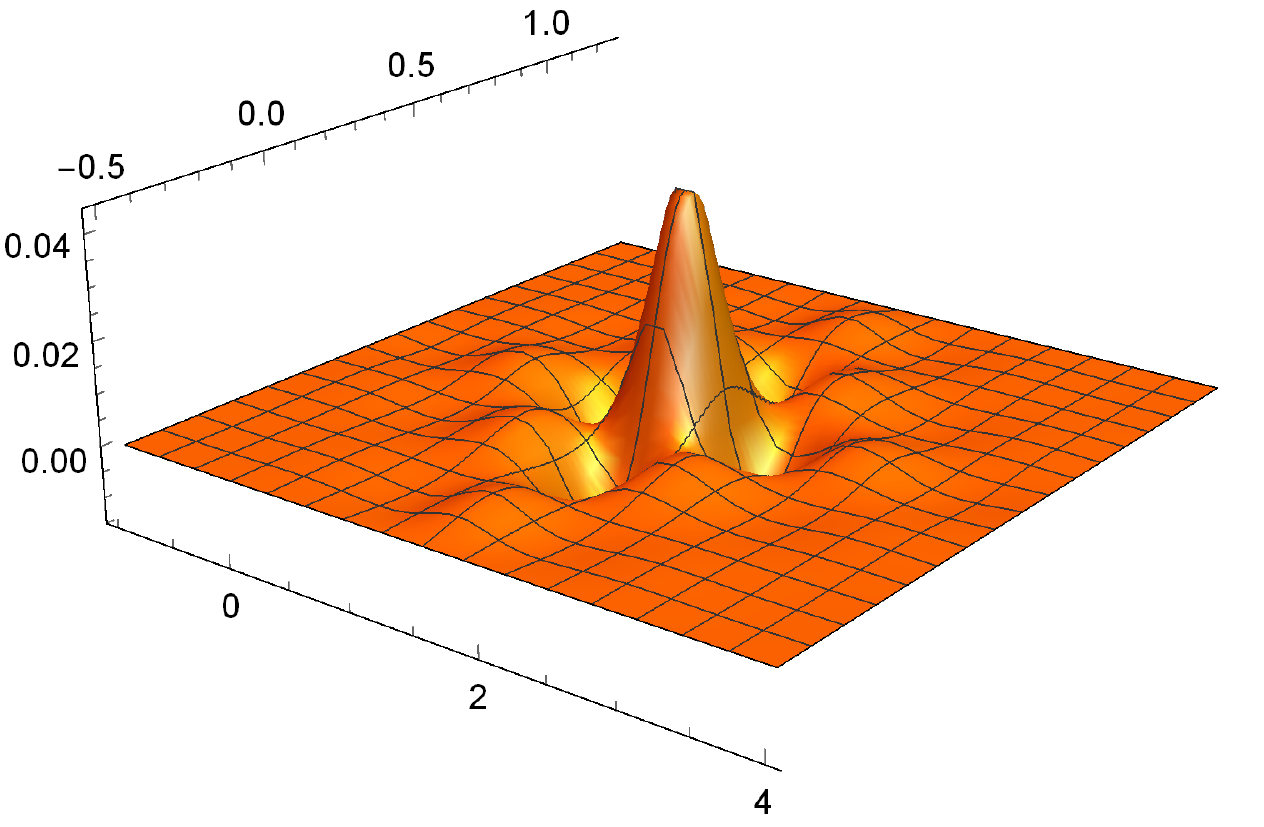}}
	\caption{The basis function $-\boldsymbol{s}_{4;\boldsymbol{j},\boldsymbol{0}}$ for $\boldsymbol{j}=(0,4)$.}
	\label{b-basis-b2}
\end{figure}

Further we state two lemmas.
\begin{lem} \label{recov1}
	Every $f \in V_N^m$ can be reproduced by the fundamental spline interpolating operator $\Jb_N^m$, i.e.
	$$
	(\Jb_N^mf)(\boldsymbol{x})=f(\boldsymbol{x}), \ \forall \boldsymbol{x} \in \R.
	$$
\end{lem}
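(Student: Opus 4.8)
The plan is to exploit the tensor-product structure of both $f$ and the operator $\Jb_N^m$ to reduce the claim to a one-dimensional reproduction statement, and then to settle the latter using the defining (interpolating) property of the cardinal spline $L^m$.

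First I would record the factorization. For $f(\boldsymbol{x})=\prod_{i=1}^d f_i(x_i)$ with each $f_i$ as in \eqref{1d}, the $\ell_1$-summability of the coefficients $\{a_{n_i}\}$, together with the product form $\Lb^m(\boldsymbol{x})=\prod_{i=1}^d L^m(x_i)$ and the exponential decay of the coefficients $c_n^{(m)}$, guarantees absolute convergence of all series involved, so that the $d$-fold sum may be rearranged as a product:
$$
(\Jb_N^m f)(\boldsymbol{x})=\sum_{\boldsymbol{n}\in\Z} \Big(\prod_{i=1}^d f_i(2^{-N}n_i)\Big)\prod_{i=1}^d L^m(2^N x_i-n_i)=\prod_{i=1}^d\Big(\sum_{n_i\in\zz} f_i(2^{-N}n_i)\,L^m(2^N x_i-n_i)\Big).
$$
Hence it suffices to prove the univariate identity $\sum_{n\in\zz} g(2^{-N}n)\,L^m(2^N x-n)=g(x)$ for every $g(x)=\sum_{k\in\zz} a_k N_m(2^N x-k)$ with $\{a_k\}\in\ell_1$.

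Next I would remove the dilation by the substitution $y=2^N x$. Writing $\tilde g(y):=g(2^{-N}y)=\sum_k a_k N_m(y-k)$, the claim becomes the level-zero reproduction $\sum_n \tilde g(n)\,L^m(y-n)=\tilde g(y)$, i.e. that the fundamental spline operator at integer knots acts as the identity on the integer-knot spline space $\Span\{N_m(\cdot-k):k\in\zz\}$. By linearity and the $\ell_1$ convergence that permits interchanging the sums over $k$ and $n$, it is enough to verify this on a single shifted B-spline, and by shift invariance it then reduces to the single identity $\sum_{n\in\zz} N_m(n)\,L^m(y-n)=N_m(y)$ for all $y\in\re$.

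Finally I would establish this core identity. Both sides are cardinal splines of order $m$ with integer knots; by the interpolation property $L^m(j)=\delta_{j,0}$, the left-hand side takes the value $N_m(j)$ at every integer $j$, so it interpolates the same data as $N_m$ itself. The coefficients $c_n^{(m)}$ in \eqref{card-sp} are, by construction, precisely those making $L^m$ the fundamental cardinal interpolant, and the resulting reproduction of the whole spline space $V_0^m$ is part of the classical cardinal spline interpolation theory (see \cite{Chuibook}); equivalently, the difference of the two sides is a bounded cardinal spline vanishing at all integers, hence identically zero by uniqueness of bounded cardinal spline interpolation. I expect the only genuinely delicate point to be the convergence bookkeeping, namely justifying the rearrangements of the doubly infinite sums, which the $\ell_1$ assumption on the coefficients and the exponential decay of $c_n^{(m)}$ render routine; the reproduction identity itself is then a direct appeal to the construction of $L^m$.
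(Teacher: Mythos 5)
Your argument is correct in the case the paper actually needs, and its multivariate skeleton coincides with the paper's: both reduce to dimension one via the tensor-product structure of $f$ and of $\Lb^m$, and both ultimately rest on the interpolation property $\Lb^m(\boldsymbol{j})=\delta_{\boldsymbol{j},0}$. The genuine difference is what happens after the reduction: the paper disposes of the univariate statement by citing Lemma 3.2 of \cite{DU2019}, whereas you re-prove it, first removing the dilation, then reducing by linearity and shift invariance to the single identity $\sum_{n\in\zz}N_m(n)L^m(y-n)=N_m(y)$, and finally settling that identity by uniqueness of bounded cardinal spline interpolation (the difference of the two sides is a bounded spline vanishing at all integers, hence zero). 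This uniqueness argument is a legitimate classical route through Schoenberg's theory as presented in \cite{Chuibook}, and it buys you a self-contained proof where the paper has only a citation; note that the interpolation property alone, which is all the paper's proof mentions, shows only that $\Jb_N^m f$ agrees with $f$ on the grid $2^{-N}\Z$, so the uniqueness step you supply is exactly the missing substance.

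One caveat you should make explicit: your claim that both sides of the core identity are splines with \emph{integer} knots silently uses that $m$ is even. By \eqref{card-sp}, $L^m$ is built from the shifts $N_m(\cdot+m/2-n)$, so its knots lie in $\zz-m/2$; these are integers only for even $m$, and for odd $m$ the left-hand side lives on the half-integer grid, the uniqueness argument breaks down, and in fact the identity and the lemma itself fail: for $m=1$ one has $L^1=N_1(\cdot+1/2)$, so $\Jb_N^1$ sends $\chi_{[0,2^{-N})}\in V_N^1$ to $\chi_{[-2^{-N-1},2^{-N-1})}$. This is a defect of the lemma's stated generality rather than of your proof — the paper only ever applies the construction with even order (the interpolant used is $L^{2m}$) — but your write-up should state the evenness assumption at the point where you assert the common knot set, and should also record that both sides are bounded (clear from the compact support of $N_m$ and the exponential decay of the $c_n^{(m)}$), since boundedness is what the uniqueness theorem requires.
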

\begin{proof}
The proof is similar to the one-dimensional case (see Lemma 3.2 in \cite{DU2019}). We use the fact that $\Lb^m(\boldsymbol{j})=\delta_{\boldsymbol{j},0}$ for $\boldsymbol{j}\in \Z$.
\end{proof}	

Using the expansion (\ref{exp-mul}) we define the operator $\Sb_N^{m}$
$$
\Sb_N^{m}f=\sum\limits_{|\boldsymbol{j}|_{\infty}<N}\sum\limits_{\boldsymbol{k} \in \Z} \lambda_{2m;\jb,\kb}(f) \boldsymbol{s}_{2m;\boldsymbol{j},\boldsymbol{k}}.
$$	
\begin{lem} \label{recov2}
	Every $f \in V_N^m$ can be reproduced by the  operator $\Sb_N^{m}$, i.e.
	$$
	(\Sb_N^{m}f)(\boldsymbol{x})=f(\boldsymbol{x}), \ \forall \boldsymbol{x} \in \R.
	$$
\end{lem}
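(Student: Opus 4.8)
The plan is to exploit the tensor product structure of both the space $V_N^m$ and the operator $\Sb_N^m$, thereby reducing the claim to the univariate reproduction property from \cite{DU2019}. Fix $f\in V_N^m$, so that $f(\boldsymbol{x})=\prod_{i=1}^d f_i(x_i)$ with each $f_i$ of the form \eqref{1d}; in particular each factor $f_i$ lies in the univariate space $V_N^m$.

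First I would verify that on such product functions the coefficient functional itself factorizes,
$$
\lambda_{2m;\jb,\kb}(f)=\prod_{i=1}^d \lambda_{2m;j_i,k_i}(f_i),
$$
where on the right hand side $\lambda_{2m;j_i,k_i}$ is the corresponding univariate functional from Subsection 2.2. This rests on three structural facts: the summation set $\iota_m=\{0,\dots,2m-2\}^d$ and the weight $\Nb_{2m}(\lb+\boldsymbol{1})=\prod_{i=1}^d N_{2m}(l_i+1)$ are Cartesian products; the mixed difference of a tensor product splits as $\Delta^{2m,e(\jb)}_{2^{-\jb-1}}f=\big(\prod_{i\in e(\jb)}\Delta^{2m}_{2^{-j_i-1}}f_i\big)\prod_{i\notin e(\jb)}f_i$; and the node $\boldsymbol{x}_{\jb;\kb,\lb}$ is a tensor point. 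Consequently the sum over $\lb\in\iota_m$ separates into a product of univariate sums over $l_i\in\{0,\dots,2m-2\}$. For a coordinate with $j_i\geq 0$ (i.e. $i\in e(\jb)$) this univariate sum is exactly $\lambda_{2m;j_i,k_i}(f_i)$. For a coordinate with $j_i=-1$ (i.e. $i\notin e(\jb)$) the node $x_{-1;k_i,l_i}=k_i$ and the value $f_i(k_i)$ are independent of $l_i$, so the sum collapses to $f_i(k_i)\sum_{l=0}^{2m-2}N_{2m}(l+1)=f_i(k_i)$ by the partition of unity identity $\sum_{l=0}^{2m-2}N_{2m}(l+1)=1$, which again matches $\lambda_{2m;-1,k_i}(f_i)=f_i(k_i)$.

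Combining this with the product form $\boldsymbol{s}_{2m;\jb,\kb}(\boldsymbol{x})=\prod_{i=1}^d s_{2m;j_i,k_i}(x_i)$ and observing that the cutoff $|\jb|_{\infty}<N$ together with $\jb\in\N_{-1}^d$ lets each index $j_i$ range independently over $\{-1,0,\dots,N-1\}$, the operator factors as
$$
(\Sb_N^m f)(\boldsymbol{x})=\prod_{i=1}^d\Big(\sum_{j_i=-1}^{N-1}\sum_{k_i\in\zz}\lambda_{2m;j_i,k_i}(f_i)\,s_{2m;j_i,k_i}(x_i)\Big)=\prod_{i=1}^d (S_N^m f_i)(x_i),
$$
where $S_N^m$ denotes the univariate truncated Faber spline operator. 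The rearrangement of this multiple sum (finite in $\jb$, infinite in $\kb$) into a product of univariate sums is legitimate because the coefficients in \eqref{1d} are $\ell_1$-summable and the Faber splines are exponentially localized, so the series converges absolutely. Since each $f_i$ belongs to the univariate $V_N^m$, the one-dimensional reproduction result from \cite{DU2019} gives $S_N^m f_i=f_i$, and taking the product over $i$ yields $\Sb_N^m f=\prod_{i=1}^d f_i=f$.

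The principal obstacle is the bookkeeping in the factorization of $\lambda_{2m;\jb,\kb}$, above all the treatment of the coordinates in $[d]\setminus e(\jb)$, where $j_i=-1$: there one must check that summing the B-spline weights against the $l_i$-independent values returns the bare point evaluation, which is precisely where the identity $\sum_{l=0}^{2m-2}N_{2m}(l+1)=1$ enters. Once the factorization is secured, the remainder is a routine tensorization of the univariate statement; alternatively, one may note that $\Sb_N^m$ and $\Jb_N^m$ then coincide on $V_N^m$ and invoke Lemma \ref{recov1} directly.
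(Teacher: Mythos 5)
Your proof is correct and follows exactly the route the paper takes: the paper's own proof is a one-line reduction citing the univariate identity $S_N^m f = f$ (Lemma 3.1 in \cite{DU2019}) together with the tensor structure of $V_N^m$, which is precisely what you carry out, just with the factorization of $\lambda_{2m;\jb,\kb}$ and of the sum over $(\jb,\kb)$ written out in full (including the partition-of-unity identity $\sum_{l=0}^{2m-2}N_{2m}(l+1)=1$ needed for the coordinates with $j_i=-1$). No discrepancies; your version simply makes explicit the bookkeeping the paper leaves to the reader.
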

\begin{proof}
The proof follows from the fact $\Sb_N^{m}f=f$ for $d=1$ (see Lemma 3.1 \cite{DU2019}) and tensor structure of functions from the space $V_N^m$.
\end{proof}	

As a consequence from this two lemmas we get the following
\begin{lem} \label{recov3}
	For every function $f \in C_0(\R)$ we have $\Sb_N^{m}f \equiv \Jb_N^mf$.
\end{lem}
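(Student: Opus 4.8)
The plan is to combine the two reproduction lemmas with the observation that, in the truncated range $|\jb|_\infty<N$, the Faber coefficients of $f$ depend only on the dyadic samples $f(\nb/2^N)$, $\nb\in\Z$.

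First I would record that $\Jb_N^m f$ lies in $V_N^m$. Since $f\in C_0(\R)$ is compactly supported, only finitely many values $f(\nb/2^N)$, $\nb\in\Z$, are nonzero, so that
$$
\Jb_N^m f=\sum_{\nb\in\Z} f(\nb/2^N)\,\Lb^m(2^N\cdot-\nb)
$$
is in fact a finite linear combination of the functions $\Lb^m(2^N\cdot-\nb)=\prod_{i=1}^d L^m(2^Nx_i-n_i)$. By (\ref{card-sp}) each factor $L^m(2^Nx_i-n_i)$ is a convergent ($l_1$) combination of the $B$-splines $N_m(2^Nx_i-\cdot)$ and hence has the form (\ref{1d}); consequently $\Lb^m(2^N\cdot-\nb)\in V_N^m$. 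Applying Lemma \ref{recov2} to each of these functions and using the linearity of $\Sb_N^m$, I obtain
$$
\Sb_N^m\big(\Jb_N^m f\big)=\Jb_N^m f.
$$

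It therefore suffices to prove $\Sb_N^m f=\Sb_N^m(\Jb_N^m f)$, which by the definition of $\Sb_N^m$ reduces to the coefficient identity $\lambda_{2m;\jb,\kb}(f)=\lambda_{2m;\jb,\kb}(\Jb_N^m f)$ for all $\kb\in\Z$ and all $\jb$ with $|\jb|_\infty<N$. The point I would establish is that such a coefficient depends only on the values of its argument on the grid $2^{-N}\Z$. Indeed, in a direction $i\in e(\jb)$ (so $0\le j_i\le N-1$) the $2m$-th order difference $\Delta^{2m}_{2^{-j_i-1}}$ based at $x_{j_i;k_i,l_i}=(2k_i+l_i)/2^{j_i+1}$ samples the function at the points $(2k_i+l_i+s)/2^{j_i+1}$, $s=0,\dots,2m$, all of which lie in $2^{-(j_i+1)}\zz\subseteq 2^{-N}\zz$ because $j_i+1\le N$; in a direction $i\notin e(\jb)$ (so $j_i=-1$) the corresponding coordinate is simply $k_i\in\zz\subseteq 2^{-N}\zz$. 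Hence every evaluation node occurring in $\lambda_{2m;\jb,\kb}$ belongs to $2^{-N}\Z$.

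Since $\Jb_N^m$ interpolates $f$ exactly on this grid, i.e.\ $(\Jb_N^m f)(\nb/2^N)=f(\nb/2^N)$ for all $\nb\in\Z$, the two functions $f$ and $\Jb_N^m f$ produce identical coefficients $\lambda_{2m;\jb,\kb}$ for every $|\jb|_\infty<N$, giving $\Sb_N^m f=\Sb_N^m(\Jb_N^m f)=\Jb_N^m f$, as claimed. I expect the only genuinely technical point to be the grid-containment bookkeeping for the difference stencils in the previous step; once the evaluation nodes are pinned down to $2^{-N}\Z$, the remainder is just linearity together with the two preceding lemmas.
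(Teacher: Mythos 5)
Your proof is correct and follows essentially the same route as the paper: both arguments establish $\Sb_N^m(\Jb_N^m f)=\Jb_N^m f$ by applying the reproduction property of $\Sb_N^m$ on $V_N^m$ (Lemma \ref{recov2}) to $\Jb_N^m f$, and then identify $\Sb_N^m(\Jb_N^m f)$ with $\Sb_N^m f$ using the interpolation property $(\Jb_N^m f)(\nb/2^N)=f(\nb/2^N)$ together with the fact that the coefficients $\lambda_{2m;\jb,\kb}$ with $|\jb|_\infty<N$ only evaluate their argument on the grid $2^{-N}\Z$. The only difference is that you spell out two points the paper leaves implicit (the finite-linear-combination/linearity argument placing $\Jb_N^m f$ in the span of $V_N^m$, and the stencil bookkeeping showing all evaluation nodes lie in $2^{-N}\Z$), which is a welcome clarification rather than a deviation.
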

\begin{proof}
	Let $f \in C_0(\R)$ and $\boldsymbol{x} \in \supp f$. Since $\Jb_N^mf \in V_N^m$ according to Lemma \ref{recov1} we have that $\Sb_N^{m}(\Jb_N^mf)(\boldsymbol{x})=\Jb_N^mf(\boldsymbol{x})$. On the other hand, since $\Jb_N^mf(\boldsymbol{k}/2^N)=f(\boldsymbol{k}/2^N)$, $\boldsymbol{k}  \in \Z \cap \supp f$, according to definition of $\Sb_N^{m}$ we get that $\Sb_N^{m}(\Jb_N^mf)(\boldsymbol{x})=\Sb_N^{m}f(\boldsymbol{x})$.  	
\end{proof}

\begin{lem} \label{norm-lemma}
For each $m \in \N$
	\begin{equation} \label{jm}
	\|\Jb^mf\|_{\infty \rightarrow \infty} \leq K(m),
	\end{equation}
	where the constant $K(m)$ depends only on $m$.
\end{lem}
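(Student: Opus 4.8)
The plan is to bound the operator $\Jb^m$ by controlling its interpolation kernel in an absolutely convergent (Wiener amalgam) sense, thereby reducing everything to the univariate cardinal spline $L^m$ and its exponential decay. First I would observe that, directly from the definition of $\Jb^m$ and the triangle inequality, for every $\boldsymbol{x} \in \R$ and every $f \in C_0(\R)$,
\begin{equation*}
|(\Jb^m f)(\boldsymbol{x})| = \Big| \sum_{\boldsymbol{n} \in \Z} f(\boldsymbol{n}) \Lb^m(\boldsymbol{x} - \boldsymbol{n}) \Big| \leq \|f\|_{\infty} \sum_{\boldsymbol{n} \in \Z} |\Lb^m(\boldsymbol{x} - \boldsymbol{n})|.
\end{equation*}
Hence it suffices to prove that $K(m) := \sup_{\boldsymbol{x} \in \R} \sum_{\boldsymbol{n} \in \Z} |\Lb^m(\boldsymbol{x} - \boldsymbol{n})|$ is finite; the same quantity also guarantees absolute convergence of the series defining $\Jb^m f$.

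Next I would exploit the tensor product structure. Since $\Lb^m(\boldsymbol{x}) = \prod_{i=1}^d L^m(x_i)$, the kernel sum factorizes over the nonnegative factors as
\begin{equation*}
\sum_{\boldsymbol{n} \in \Z} |\Lb^m(\boldsymbol{x} - \boldsymbol{n})| = \prod_{i=1}^d \Big( \sum_{n_i \in \zz} |L^m(x_i - n_i)| \Big),
\end{equation*}
so that $K(m) = S(m)^d$ with $S(m) := \sup_{x \in \re} \sum_{n \in \zz} |L^m(x - n)|$. Thus the multivariate bound follows immediately from the univariate one, and, $d$ being fixed throughout the paper, the resulting constant depends only on $m$.

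It remains to bound $S(m)$. I would first note that $x \mapsto \sum_{n} |L^m(x-n)|$ is $1$-periodic (reindex $n \mapsto n-1$), so the supremum may be taken over $x \in [0,1]$. The decisive ingredient is the exponential decay of $L^m$: from the defining formula $L^m(x) = \sum_n c_n^{(m)} N_m(x + m/2 - n)$, the exponential decay $|c_n^{(m)}| \leq C \rho^{|n|}$ of the coefficients (with $0 < \rho < 1$; classical, see \cite{Chuibook}), and the fact that $N_m$ is supported on $[0,m]$ with $0 \leq N_m \leq 1$, one obtains $|L^m(x)| \leq C' \rho^{|x|}$ for suitable $C' = C'(m)$. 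Indeed, for fixed $x$ only the at most $m+1$ indices $n$ with $x - m/2 \leq n \leq x + m/2$ contribute, and for those $|n| \geq |x| - m/2$, so $|c_n^{(m)}| \leq C \rho^{-m/2}\rho^{|x|}$. Feeding this into the periodic sum gives, for $x \in [0,1]$,
\begin{equation*}
\sum_{n \in \zz} |L^m(x - n)| \leq C' \sum_{n \in \zz} \rho^{|x-n|} \leq \frac{2\,C'}{1-\rho},
\end{equation*}
which is finite and uniform in $x$, so that $S(m) < \infty$ and therefore $K(m) = S(m)^d < \infty$.

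The main obstacle is precisely the pointwise decay estimate $|L^m(x)| \lesssim \rho^{|x|}$ for the cardinal interpolating spline. This is where the specific structure of $L^m$ as a superposition of compactly supported $B$-splines with exponentially decaying coefficients is essential: the localization of $N_m$ forces only $O(m)$ terms to be active at each point, with indices of order $|x|$, which is exactly what converts coefficient decay into pointwise decay. Everything else — the triangle inequality, the tensor factorization, the reduction to a fundamental period, and the summation of a geometric series — is routine.
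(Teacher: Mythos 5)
Your proof is correct, and it takes a genuinely different route from the paper's, although both rest on the same two facts (compact support of the $B$-spline $N_m$ and exponential decay of the coefficients $c_n^{(m)}$). You first factorize the kernel sum $\sum_{\boldsymbol{n}\in\Z}|\Lb^m(\boldsymbol{x}-\boldsymbol{n})|$ over the coordinates, reducing everything to the univariate quantity $\sup_{x\in\re}\sum_{n\in\zz}|L^m(x-n)|$, and then control that by proving pointwise exponential decay $|L^m(x)|\lesssim \rho^{|x|}$ (compact support of $N_m$ means only $O(m)$ coefficients are active at each $x$, with indices of size $|x|+O(m)$, which converts coefficient decay into kernel decay) and finally summing a geometric series. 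The paper never passes through pointwise decay of the cardinal spline: it inserts the expansion $\Lb^m=\sum_{\boldsymbol{k}\in\Z} c^{(m)}_{\boldsymbol{k}}\Nb_m(\cdot+\boldsymbol{m}/2-\boldsymbol{k})$ directly into the multivariate kernel sum and interchanges the sums over $\boldsymbol{n}$ and $\boldsymbol{k}$, so that the bound splits into $\sum_{\boldsymbol{k}\in\Z}|c^{(m)}_{\boldsymbol{k}}|<\infty$ (exponential decay) times the uniform bound $\sup_{\boldsymbol{y}}\sum_{\boldsymbol{n}\in\Z}|\Nb_m(\boldsymbol{y}-\boldsymbol{n})|\leq C(m)$ (local finiteness from compact support). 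The paper's interchange-of-summation argument is shorter and stays multivariate throughout; your version costs an extra localization argument but buys a reusable, strictly stronger intermediate statement, namely the exponential decay of the interpolation kernel $\Lb^m$ itself, which is the standard Wiener-amalgam-type way of seeing such bounds.
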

\begin{proof}
For $\boldsymbol{x} \in \R$ we have
\begin{align*}
|\Jb^mf(\boldsymbol{x})|&=\bigg|\sum \limits_{\boldsymbol{n} \in \Z} f(\boldsymbol{n})\Lb^m(\boldsymbol{x}-\boldsymbol{n}) \bigg| \leq \|f\|_{\infty} \sum \limits_{\boldsymbol{n} \in \Z} |\Lb^m(\boldsymbol{x}-\boldsymbol{n})|\\
& =  \|f\|_{\infty} \sum \limits_{\boldsymbol{n} \in \Z}  \bigg| \sum \limits_{\boldsymbol{k} \in \Z} c_{\boldsymbol{k}}^{(m)}\Nb_m(\boldsymbol{x}-\boldsymbol{n}-\boldsymbol{k}+\boldsymbol{m}/2)\bigg|\\
& \leq \|f\|_{\infty} \sum \limits_{\boldsymbol{k} \in \Z} |c_{\boldsymbol{k}}^{(m)}|  \sum \limits_{\boldsymbol{n} \in \Z}  |\Nb_m(\boldsymbol{x}-\boldsymbol{n}-\boldsymbol{k}+\boldsymbol{m}/2)|
\end{align*}
Since $\Nb_m$ is compactly supported then the sum $\sum_{\boldsymbol{n} \in \Z}  |\Nb_m(\boldsymbol{x}-\boldsymbol{n}-\boldsymbol{k}+\boldsymbol{m}/2)|$ is finite and depends only on $m$. Since coefficients $c_{\boldsymbol{k}}^{(m)}$ decay exponentially the sum $\sum_{\boldsymbol{k} \in \Z} |c_{\boldsymbol{k}}^{(m)}|$ is also finite and depends only on $m$. Taking this into account we get (\ref{jm}).
\end{proof}

\begin{satz}\label{conv-c}
	For a  function $f \in C_0(\R)$, we have that
	\begin{equation}\label{C-conv}
	\lim \limits_{N\rightarrow \infty} \|f - \Sb_N^{m}f\|_{\infty}=0.
	\end{equation}
\end{satz}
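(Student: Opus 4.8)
The plan is to transfer the statement from the Faber-spline partial sum operator $\Sb_N^m$ to the fundamental spline interpolation operator $\Jb_N^m$, where a hands-on error analysis is feasible. By Lemma~\ref{recov3}, for $f\in C_0(\R)$ we have $\Sb_N^m f\equiv \Jb_N^m f$, so it suffices to prove $\|f-\Jb_N^m f\|_\infty\to 0$. The first ingredient I would record is that the cardinal spline reproduces constants: combining the $B$-spline partition of unity $\sum_{n\in\zz}N_m(x-n)=1$ with $L^m(j)=\delta_{j,0}$ and the representation (\ref{card-sp}) gives $\sum_{n\in\zz}L^m(x-n)=1$ for every $x$, and hence, by the tensor structure of $\Lb^m$, $\sum_{\boldsymbol{n}\in\Z}\Lb^m(2^N\boldsymbol{x}-\boldsymbol{n})=1$ for all $\boldsymbol{x}\in\R$ and all $N$.

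Using this identity I would write the error as a single lattice sum,
$$
f(\boldsymbol{x})-\Jb_N^m f(\boldsymbol{x})=\sum_{\boldsymbol{n}\in\Z}\big(f(\boldsymbol{x})-f(2^{-N}\boldsymbol{n})\big)\,\Lb^m(2^N\boldsymbol{x}-\boldsymbol{n}),
$$
where $f(2^{-N}\boldsymbol{n})$ abbreviates $f(2^{-N}n_1,\dots,2^{-N}n_d)$. I would then split the lattice into a near region $|2^N\boldsymbol{x}-\boldsymbol{n}|_\infty\le R$ and a far region $|2^N\boldsymbol{x}-\boldsymbol{n}|_\infty> R$, with a cutoff $R$ to be fixed. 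The key uniform bound is $\sum_{\boldsymbol{n}\in\Z}|\Lb^m(2^N\boldsymbol{x}-\boldsymbol{n})|\le K(m)$, valid for all $\boldsymbol{x}$ and $N$, which is precisely the estimate extracted in the proof of Lemma~\ref{norm-lemma}. On the far region I would bound the contribution by $2\|f\|_\infty\sum_{|2^N\boldsymbol{x}-\boldsymbol{n}|_\infty>R}|\Lb^m(2^N\boldsymbol{x}-\boldsymbol{n})|$; since the coefficients $c_n^{(m)}$, and therefore $L^m$, decay exponentially, this tail tends to $0$ as $R\to\infty$ uniformly in $\boldsymbol{x}$ and $N$, reducing via the product structure of $\Lb^m$ to a one-dimensional exponential tail. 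On the near region, the condition $|2^N\boldsymbol{x}-\boldsymbol{n}|_\infty\le R$ is equivalent to $|\boldsymbol{x}-2^{-N}\boldsymbol{n}|_\infty\le 2^{-N}R$; since $f\in C_0(\R)$ is uniformly continuous, after fixing $R$ and letting $N$ be large we get $|f(\boldsymbol{x})-f(2^{-N}\boldsymbol{n})|<\eps$ throughout the near region, so this part is bounded by $\eps\,K(m)$. Adding the two contributions shows $\|f-\Jb_N^m f\|_\infty$ can be made arbitrarily small.

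I expect the main obstacle to be keeping every estimate uniform in $\boldsymbol{x}\in\R$ while juggling the two parameters $R$ and $N$ in the correct order: the cutoff $R$ must first be chosen, using exponential decay, to suppress the far tail uniformly in both $\boldsymbol{x}$ and $N$, and only afterwards may $N$ be sent to infinity to exploit uniform continuity on the now-fixed near region. The multivariate tail estimate is the place where a little care is needed, as it relies on the tensor factorization of $\Lb^m$ to split the far region into coordinatewise pieces and thereby inherit the one-dimensional exponential bound.
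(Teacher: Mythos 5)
Your proof is correct, but it takes a genuinely different route from the paper. The paper, like you, first invokes Lemma~\ref{recov3} to replace $\Sb_N^{m}$ by $\Jb_N^m$, but then it interposes the tensorized quasi-interpolation operator $\Qb_{N}^m$ of D\~{u}ng: since $\Qb_{N}^mf \in V_N^m$ is reproduced by $\Jb_N^m$ (Lemma~\ref{recov1}), one gets $\|f - \Jb_N^m f\|_{\infty} \leq \left(1+ \|\Jb_N^m\|_{\infty \rightarrow \infty}\right) \|f - \Qb_{N}^mf\|_{\infty}$, and the convergence $\|f - \Qb_{N}^mf\|_{\infty} \rightarrow 0$ is quoted from \cite{DD2016}, with Lemma~\ref{norm-lemma} supplying the operator-norm bound. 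You instead prove the convergence of $\Jb_N^m f$ directly: your partition-of-unity identity $\sum_{\boldsymbol{n}\in\Z}\Lb^m(2^N\boldsymbol{x}-\boldsymbol{n})=1$ is valid (interchanging the two absolutely convergent sums in (\ref{card-sp}) with $\sum_{n}N_m(x-n)=1$ shows $\sum_{k}L^m(x-k)$ is the constant $\sum_n c_n^{(m)}$, and evaluating at an integer with $L^m(j)=\delta_{j,0}$ forces that constant to be $1$); the error representation $f(\boldsymbol{x})-\Jb_N^m f(\boldsymbol{x})=\sum_{\boldsymbol{n}\in\Z}\bigl(f(\boldsymbol{x})-f(2^{-N}\boldsymbol{n})\bigr)\Lb^m(2^N\boldsymbol{x}-\boldsymbol{n})$ then follows; and your near/far splitting is sound, including the quantifier order (fix $R$ first by the exponential tail estimate, uniformly in $\boldsymbol{x}$ and $N$, then send $N\to\infty$ and use uniform continuity of the compactly supported continuous $f$) and the coordinatewise factorization of the far region, where one bad coordinate contributes a one-dimensional exponential tail and the remaining $d-1$ coordinates are absorbed into the uniform bound from Lemma~\ref{norm-lemma}. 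As for what each approach buys: the paper's argument is shorter and delegates the hard analysis to known quasi-interpolation results (which would also yield convergence rates for smoother $f$), whereas yours is self-contained and elementary, needing only the reproduction lemmas, the partition of unity, exponential decay of $L^m$, and uniform continuity --- in particular it removes the dependence on \cite{DD2016} for this theorem.
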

\begin{proof}
	By $Q_N^m$ we denote a univariate quasi-interpolation operator (see \cite{DD2016} for details)
	$$
	Q_{N}^m(x)=\sum\limits_{k \in \nu(N)} a_k^{(N)}(f) N_m(2^N x-k),
	$$
	where the set $\nu(N)$ is  finite and the functional $a_k^{(N)}(f)$ is defined by using finite number of function values.
	 	We define the operator $\Qb_{N}^m(\boldsymbol{x})$ for $\boldsymbol{x} \in \R$ as
	$$
	\Qb_{N}^m(\boldsymbol{x})=\prod\limits_{i=1}^d Q_{N}^m(x_i).
	$$	
	
	Since according to Lemma \ref{recov3} we have that $\Sb_N^{m}=\Jb_N^m$, we can write
	\begin{align*}
	\|f - \Sb_N^{m}f\|_{\infty}&=\|f - \Jb_N^mf\|_{\infty}\leq \|f - \Qb_{N}^mf\|_{\infty}+ \| \Qb_{N}^mf-\Jb_N^mf\|_{\infty}\\
	&=\|f - \Qb_{N}^mf\|_{\infty} + \|\Jb_N^m\left(f - \Qb_{N}^mf \right)\|_{\infty}\\
	& \leq \left(1+ \|\Jb_N^m\|_{\infty \rightarrow \infty}\right) \|f - \Qb_{N}^mf\|_{\infty}.
	\end{align*}
	We used that $\Qb_{N}^mf \in V_N^m$ and according to Lemma \ref{recov1} $\Jb_N^m(\Qb_{N}^mf)=\Qb_{N}^mf$.
	
	Further we use the facts that $\|f - \Qb_{N}^mf\|_{\infty} \rightarrow 0$ if $N\rightarrow \infty$ (see \cite{DD2016}) and the norm $\|\Jb_N^m\|_{\infty \rightarrow \infty}$ is bounded according to Lemma \ref{norm-lemma}. It implies  (\ref{C-conv}).
\end{proof}

\begin{figure}[h!]
	\center{\includegraphics[width=0.55\linewidth]{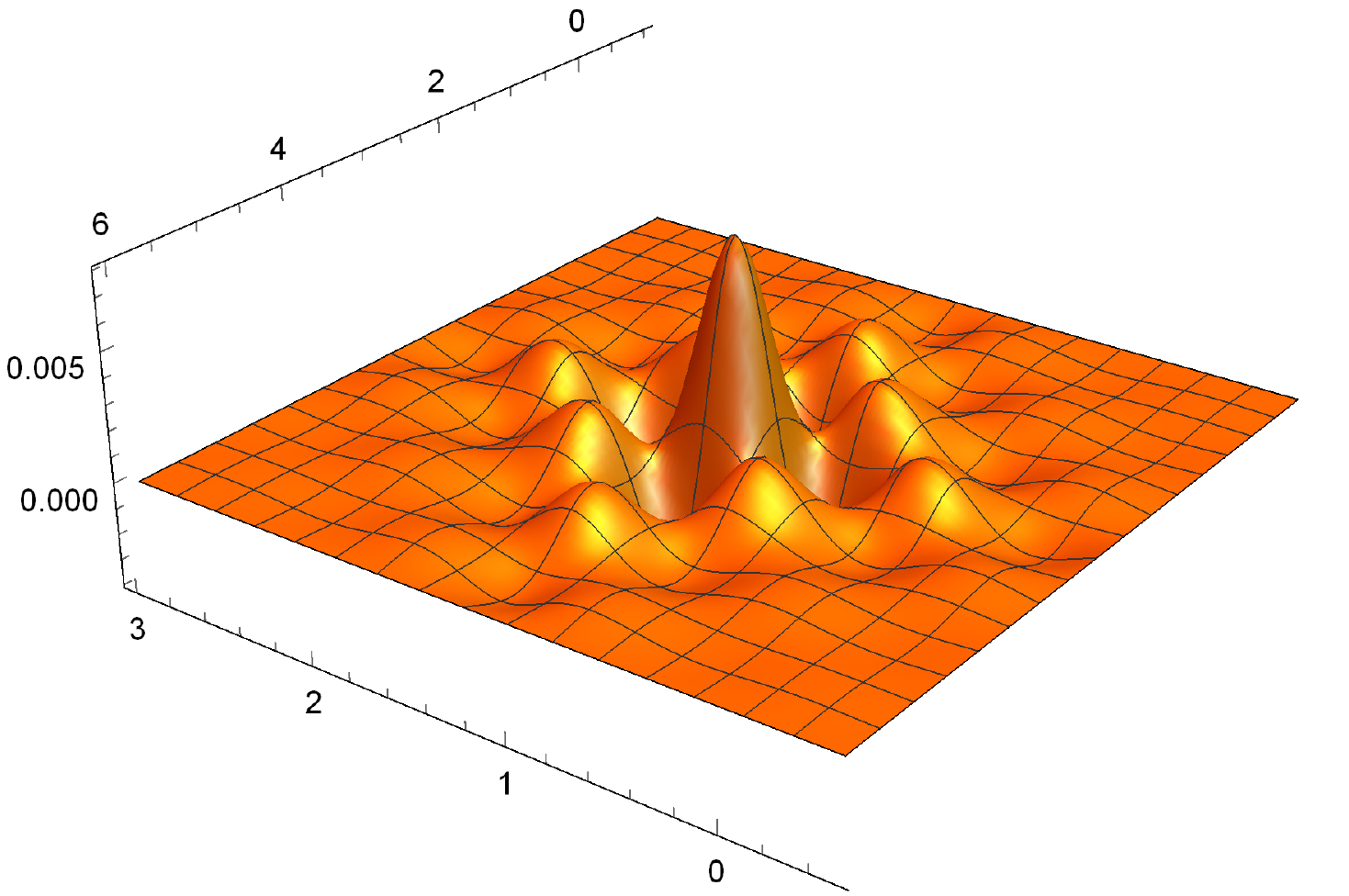}}
	\caption{The basis function $\boldsymbol{s}_{6;\boldsymbol{j},\boldsymbol{0}}$ where $\boldsymbol{j}=(2,0)$.}
	\label{b-basis-b6}
\end{figure}

Since for 1D the basis functions $s_{2m;j,k}$ are defined by
$$
s_{2m;j,k}= \sum \limits_{n \in \zz} a_n^{(m)} v_{2m;j,k+n},
$$
where $v_{2m;j,l}$ is defined for $j \in \N_0$ as
$$
v_{2m;j,l}(x):= 2^{mj}  \int\limits_{-\infty}^x \frac{\psi_{m;j,l}(t)}{(m-1)!}(x-t)^{m-1} dt
$$
and $v_{2m;-1,l}(x):=N_{2m}(x+m-l)$, it is clear that $v_{2m;j,l}$ are supported on the segment $[2^{-j}l,2^{-j}(l+2m-1)]$ and $\supp \, v_{2m;-1,l}=[-m+l,m+l]$. We denote a segment $K^{(m)}_{j,l}$
$$
K^{(m)}_{j,l}:= \begin{cases}
[2^{-j}l,2^{-j}(l+2m-1)], & j \in \N_0;\\
[-m+l,m+l], & j=-1.
\end{cases}
$$
For the multivariate case we will have
$$
\boldsymbol{s}_{2m;\jb,\kb}= \sum \limits_{\nb \in \Z} a_{\boldsymbol{n}}^{(m,e(\jb))} \boldsymbol{v}_{2m;\boldsymbol{j},\boldsymbol{k}+\nb},
$$
where $a_{\boldsymbol{n}}^{(m,e(\jb))}=\prod_{i=1}^d a_{n_i}^{(m)}$, where $a_{n_i}^{(m)}$ are given in Theorem \ref{dual-wav-repr} and \eqref{card-sp}. To be more precise, if $a_{n_i}^{(m)}=a_{n_i}^{(m)}$ if $i \in e(\jb)$ and $a_{n_i}^{(m)}=c_{n_i}^{(m)}$ if $i \not\in e(\jb)$. The function $\boldsymbol{v}_{2m;\boldsymbol{j},\boldsymbol{l}}$ is supported on the cube $K_{\jb,\lb}^{(m)}=\prod_{i=1}^dK^{(m)}_{j_i,l_i}$.

\section{Sampling isomorphisms in mixed smoothness spaces}
The main goal of this subsection is to prove sequence space isomorphisms with respect to the tensorized Faber spline basis.

\begin{satz}\label{charact-m} \begin{itemize}
		\item [(i)]
		Let $0< p,\theta\leq \infty$,  $p>1/2m$ and $1/p<r<\min\{2m-1+1/p,2m\}$ for $m\in \N$ and $m\geq 2$. Then every compactly supported $f \in S_{p,\theta}^rB$ can be represented by the series (\ref{exp-mul}), which is convergent unconditionally in the space $S_{p,\theta}^{r-\varepsilon}B$  for every $\varepsilon>0$. If $\max\{p,\theta\}<\infty$ we have unconditional convergence in the space $S_{p,\theta}^{r}B$.  Moreover, the following norms are equivalent
		$$
		\|\lambda^{(m)}(f) \|_{s_{p,\theta}^r b} \asymp \|f\|_{S_{p,\theta}^rB}.
		$$
		\item [(ii)] 	Let  $1/2m< p,\theta\leq \infty$, $p\neq \infty$, and $\max\{1/p,1/\theta\}<r<2m-1$ for $m\in \N$ and $m\geq 2$. Then every compactly supported $f \in S_{p,\theta}^rF$ can be represented by the series (\ref{exp-mul}), which is convergent unconditionally in the space $S_{p,\theta}^{r-\varepsilon}F$  for every $\varepsilon>0$. If $\theta<\infty$ we have unconditional convergence in the space $S_{p,\theta}^{r}F$.  Moreover, the following norms are equivalent
		$$
		\|\lambda^{(m)}(f) \|_{s_{p,\theta}^rf} \asymp \|f\|_{S_{p,\theta}^rF}.
		$$
	\end{itemize}
\end{satz}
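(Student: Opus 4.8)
The plan is to establish the norm equivalence as a two-sided estimate, proving the analysis inequality $\|\lambda^{(m)}(f)\|_{s^r_{p,\theta}b}\lesssim\|f\|_{S^r_{p,\theta}B}$ and the synthesis inequality $\|f\|_{S^r_{p,\theta}B}\lesssim\|\lambda^{(m)}(f)\|_{s^r_{p,\theta}b}$ separately, and then to deduce unconditional convergence from the equivalence together with the uniform convergence already supplied by Theorem~\ref{conv-c}. Throughout I would reduce the multivariate claim to its univariate counterpart from \cite{DU2019} by exploiting the tensor product structure of both the basis $\boldsymbol{s}_{2m;\jb,\kb}$ and the spaces $S^r_{p,\theta}B$, $S^r_{p,\theta}F$, bearing in mind that the index $e(\jb)\subseteq[d]$ splits every estimate into $2^d$ directional types that must be controlled uniformly.

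For the analysis direction I would start from a characterization of the mixed Besov/Triebel--Lizorkin norm by $2m$-th order mixed differences (a mixed modulus of smoothness), which is available precisely because $r<2m$. Since each coefficient $\lambda_{2m;\jb,\kb}(f)$ is a fixed finite linear combination of the sampled values $\Delta^{2m,e(\jb)}_{2^{-\jb-1}}f(\boldsymbol{x}_{\jb;\kb,\lb})$, the sequence norm $\|\lambda^{(m)}(f)\|_{s^r_{p,\theta}b}$ is controlled by a discretized version of that modulus. The key technical step is to pass from the point samples to the continuous $L_p$-modulus; in the quasi-Banach range this uses a maximal-function estimate for the differences and is exactly where the restriction $p>1/(2m)$ enters, guaranteeing boundedness of the relevant Peetre-type maximal operator. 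I would treat the level $\jb=-1$ (where the coefficients are plain function values and the building blocks are the cardinal splines $L^{2m}$) separately but along the same lines.

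For the synthesis direction I would invoke the atomic/molecular decomposition theory for spaces of mixed smoothness collected in the appendix. Using the representation $\boldsymbol{s}_{2m;\jb,\kb}=\sum_{\nb}a^{(m,e(\jb))}_{\boldsymbol{n}}\boldsymbol{v}_{2m;\jb,\kb+\nb}$ with exponentially decaying coefficients, it suffices to verify that the compactly supported building blocks $\boldsymbol{v}_{2m;\jb,\lb}$, supported on $K^{(m)}_{\jb,\lb}$, are atoms of the required order: their smoothness is $C^{2m-2}$ (coming from the regularity of the $2m$-th order B-spline), which supplies the smoothness parameter, while the moment conditions follow from the vanishing moments of the Chui--Wang wavelet $\psi_m$ carried through the lifting integral in \eqref{b-basis}. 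The upper constraint $r<2m-1+1/p$ is precisely the threshold up to which these atom parameters remain sufficient, and the exponential decay of $a^{(m,e(\jb))}_{\boldsymbol{n}}$ then lets me sum the shifted atoms without losing the sequence norm.

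I expect the main obstacle to be the analysis estimate in the genuinely quasi-Banach regime $\min\{p,\theta\}<1$: the coefficients are point samples of differences rather than integral functionals, so one cannot directly quote a modulus-of-smoothness characterization but must control the sampled differences by a maximal operator and then sum over the dyadic levels with the correct bookkeeping for the $2^d$ directional types $e(\jb)$. The Triebel--Lizorkin case~(ii) adds the further difficulty that the $\ell_\theta$-summation sits inside the $L_p$-norm, so the discretization must be carried out with a vector-valued (Fefferman--Stein type) maximal inequality, which is what forces the sharper lower bound $\max\{1/p,1/\theta\}<r$ and the restriction $p\neq\infty$.
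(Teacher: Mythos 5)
Your overall architecture (an analysis estimate, a synthesis estimate, and identification of the limit through the uniform convergence of Theorem~\ref{conv-c}) coincides with the paper's, which assembles Theorem~\ref{charact-m} from Propositions~\ref{charact2}, \ref{charact1}, \ref{charactf1} together with Theorem~\ref{conv-c}. The genuine gap lies in your synthesis step. First, no atomic or molecular decomposition theory is ``collected in the appendix'': the appendix contains only the local means characterization and maximal inequalities. More importantly, quoting a general atomic decomposition theorem for mixed smoothness spaces (say from \cite{Vy06}) cannot reach the claimed range $r<\min\{2m-1+1/p,2m\}$: such theorems require atoms of class $C^K$ with $K>r$, and the building blocks $\boldsymbol{v}_{2m;\jb,\lb}$ are splines of degree $2m-1$, hence of Lipschitz--H\"older smoothness at most $2m-1$, so this route caps at $r<2m-1$ and misses the interval $2m-1\le r<\min\{2m-1+1/p,2m\}$ asserted in part (i). The extra $+1/p$ is a spline-specific phenomenon (exactly as for Faber--Schauder, where Lipschitz hat functions reach $1+1/p$): because the kernel $\Psi_{\jb}$ annihilates polynomials and $\boldsymbol{s}_{2m;\jb+\lb,\kb}$ is piecewise polynomial, the convolution $\Psi_{\jb}*\boldsymbol{s}_{2m;\jb+\lb,\kb}$ for $l_i<0$ is not merely of size $2^{-(2m-1)|l_i|}$ spread over the support of the basis function, but is concentrated near its knots, i.e.\ on sets of measure $\asymp 2^{-j_i}$ per coordinate instead of $2^{-(j_i+l_i)}$. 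This is exactly what the bound (\ref{conv-b-1}) of Lemma~\ref{convol} encodes through the sets $A_{\jb+\lb,\kb+\nb}$, and it is this measure gain that turns the exponent $2m-1$ into the threshold $r-1/p<2m-1$ in the summation of Proposition~\ref{charact1}; your black-box atomic argument discards precisely this mechanism. In addition, your justification of the moment conditions is wrong: each integration in the lifting (\ref{b-basis}) consumes one vanishing moment of $\psi_m$ (which has $m$ of them), so the lifted functions $v_{2m;j,l}$ in general have no vanishing moments left. This happens to be harmless, since for $r>\max\{1/p-1,0\}$ atoms need no moment conditions, but an argument that leans on those conditions rests on a false premise.

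On the analysis side, your plan only becomes rigorous once it is converted into what the paper actually does. There is no direct passage from point samples of mixed differences to an integral modulus of smoothness: point evaluations of $\Delta^{2m,e(\jb)}_{2^{-\jb-1}}f$ are not dominated by $L_p$ moduli for a general continuous $f$, and the Peetre maximal operator you invoke is defined and bounded only for bandlimited functions. The paper therefore avoids the modulus characterization altogether: it inserts the decomposition $f=\sum_{\lb}\delta_{\jb+\lb}[f]$ into each coefficient, controls the sampled differences of every bandlimited piece pointwise by Lemma~\ref{peetre-ineq}, namely $|\Delta^{2m}_{2^{-j-1}}g(x_{j;k,s})|\lesssim \min\{1,2^{2ml}\}\max\{1,2^{al}\}\,P_{2^{j+l},a}g(x)$, and then applies the scalar and vector-valued Peetre maximal inequalities (Lemmas~\ref{peetre-ineq1} and \ref{peetre-ineq2}); this is where $a>1/p$ in the $B$-case and $a>\max\{1/p,1/\theta\}$ in the $F$-case enter, as you correctly anticipated. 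So once made precise your analysis step reproduces the paper's argument rather than offering an alternative. Finally, unconditional convergence is not a formal consequence of the norm equivalence plus Theorem~\ref{conv-c}: in the paper it is part of the synthesis statements (Propositions~\ref{charact1} and \ref{charactf1}, following \cite[Proposition 4.2]{DU2019}), obtained by applying the synthesis estimate to rearranged tails, and it is exactly there that the restrictions $\max\{p,\theta\}<\infty$ (respectively $\theta<\infty$) versus the $\varepsilon$-loss appear --- a distinction your sketch leaves unaccounted for.
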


\begin{rem}\label{rem-best-3}
For the one-dimensional setting these results were obtained in \cite{DU2019}. The case $m=1$ corresponds to the classical Faber-Schauder basis and the corresponding characterization for $d=1$ was obtained in \cite{Tr2010} and for $d>1$ this result was extended in \cite{Glen2018}. Note again that our results complement results from \cite{Tr2010} and \cite{Glen2018}, but do not include them.
\end{rem}

\begin{rem}\label{rem-best-4}
We would also like to mention about sharpness of the restrictions for the smoothness parameter $r$ in Theorem \ref{charact-m}. First we consider B-case. Lower restriction $r>1/p$ is natural and assures the embedding in the space of continuous functions. It looks like the upper bound $2m-1+1/p$ is also sharp because $\boldsymbol{s}_{2m,\boldsymbol{j},\boldsymbol{k}} \not\in S_{p,\theta}^rB$ for $r\geq 2m-1+1/p$ and $\theta < \infty$.

Now we make some comments about F-case. Note that by using interpolation technique similar to Theorem 4.16 \cite{Glen2018} for the $F$-case the range of the smoothness parameter $r$ can be extended to $\max\{1/p,1/\theta\}<r<\min\{2m-1+\min\{1/p,1/\theta\},2m\}$. In the recent preprint \cite{Sr2020} it was proved that Chui-Wang wavelets constitute an unconditional basis in $S_{p,\theta}^rB$ only if $\max\{1/p,1/\theta\}-m<r<\min\{1/p,1/\theta\}+m-1$ (so far for univariate case only). It is an extension of the corresponding results for Haar wavelets \cite{SeU2015} for wavelets of higher regularity. We can say that our new basis functions  $\boldsymbol{s}_{2m,\boldsymbol{j},\boldsymbol{k}}$ are in some sense $m$-lifted Chui-Wang wavelets. Therefore the following restriction $\max\{1/p,1/\theta\}<r<2m-1+\min\{1/p,1/\theta\}$ for smoothness parameter $r$ is probably  also sharp.

Below on Figure \ref{smooth} we show the range of smoothness parameter $r$ for B-case as it is written in Theorem \ref{charact-m} and for F-case as it is written in this remark.
\end{rem}

\begin{rem}\label{rem-ding}
 In his papers \cite{DD2009}-\cite{DD2016} D. D\~{u}ng offers a different approach to sampling characterization of Besov spaces of mixed smoothness with higher regularity. The main difference with respect to our construction is that D. D\~{u}ng  considers ``frame-type''\ system while the system $\boldsymbol{s}_{2m;\jb,
 	\kb}$  is the ``basis-type''\ system in the sense that it is linearly independent.  Further we also prove unconditional convergence of this system in Besov-Triebel-Lizorkin spaces.
\end{rem}

\begin{figure}[h!]
	\begin{minipage}[h]{0.45\linewidth}
		\center{\includegraphics[width=1\linewidth]{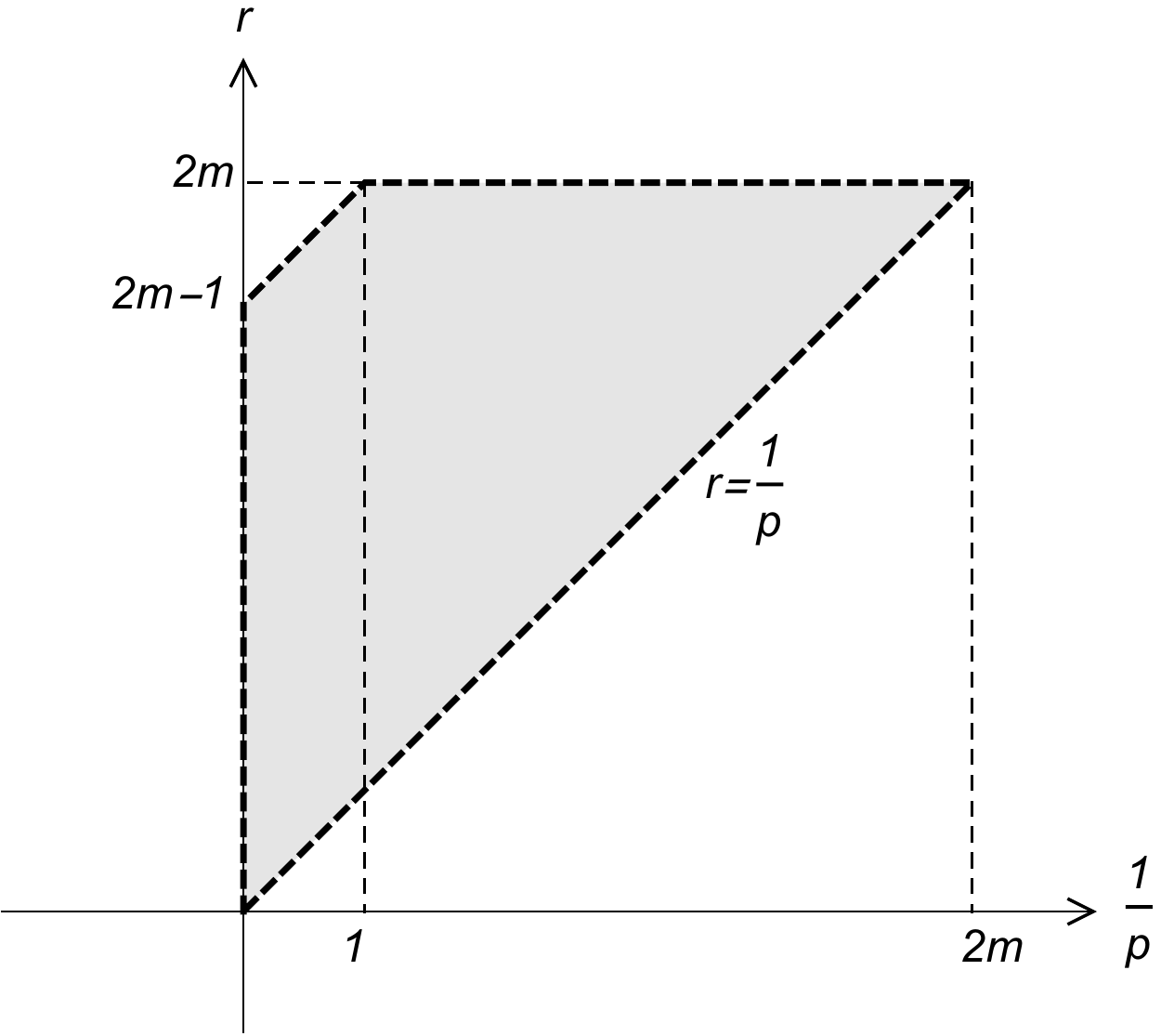}}
	\end{minipage}
	\hfill
	\begin{minipage}[h]{0.45\linewidth}
		\center{\includegraphics[width=1\linewidth]{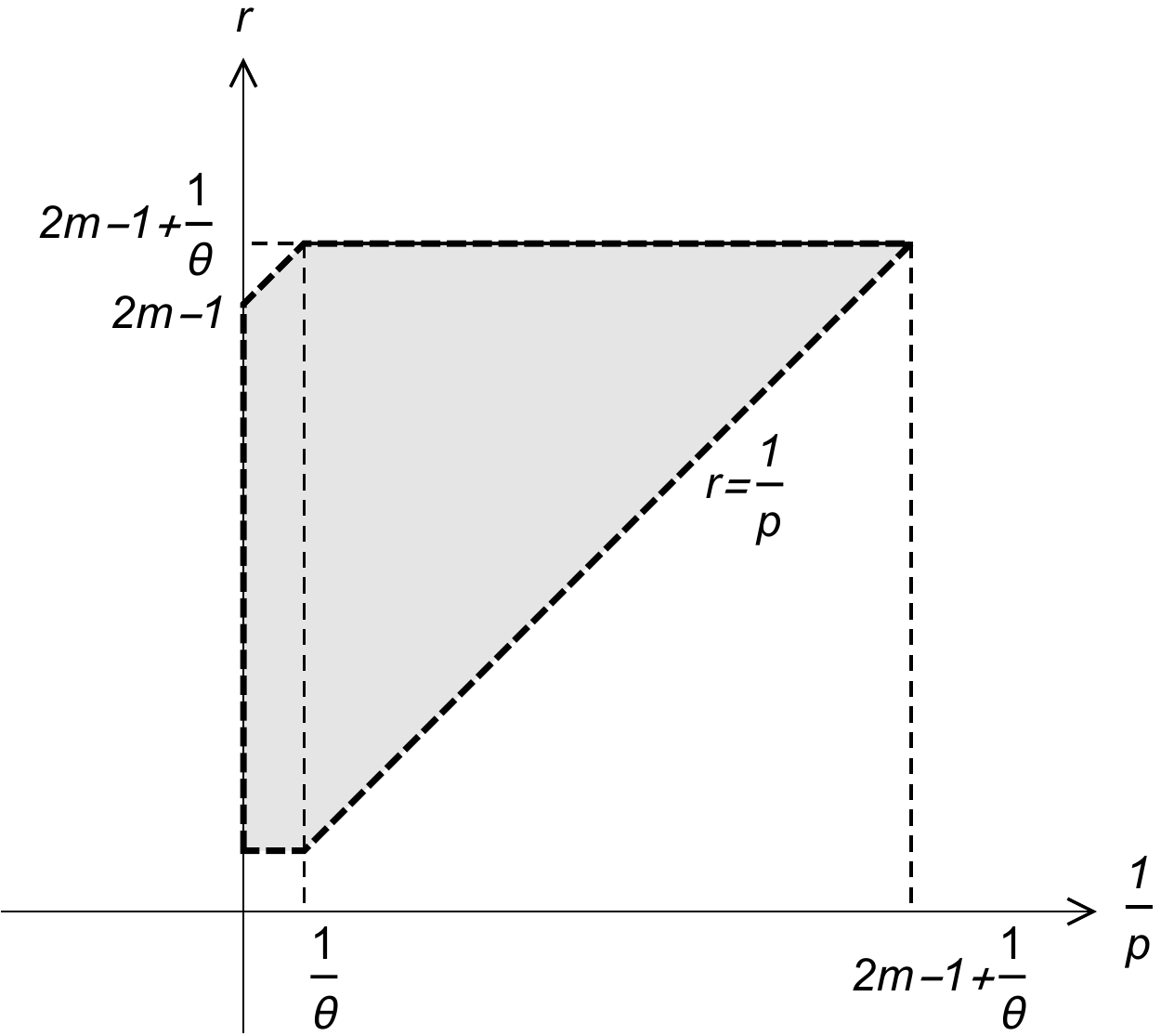}}
	\end{minipage}
	\caption{ Range of smoothness parameter $r$ for B-case (left) and for F-case (right, here $\theta>1$).}
	\label{smooth}
\end{figure}

First we prove some auxiliary statements. We apply  some known technique that was also used in \cite{Glen2018}, \cite{HMOT2016}, \cite{TV2019} and \cite{DU2019}.

The following lemma is a multivariate analog of Lemma A.11 from \cite{DU2019}. We formulate it without a proof.
\begin{lem} \label{convol}
	Let $\boldsymbol{j} \in \N_0^d$, $\boldsymbol{k}, \boldsymbol{l} \in \Z$ and $\boldsymbol{j}+\boldsymbol{l} \geq -1$. Then for the local means $\Psi_{\boldsymbol{j}}$ with  finitely many  vanishing moments of order $L$ the following estimates hold
	\begin{equation}\label{conv-b-1}
	|\Psi_{\boldsymbol{j}} * \boldsymbol{s}_{2m;\boldsymbol{j}+\boldsymbol{l},\kb}(\boldsymbol{x})|\leq C \prod\limits_{i=1}^d 2^{-\alpha_i|l_i|} \sum \limits_{\boldsymbol{n} \in \Z} |a_{\boldsymbol{n}}^{(m,e(\jb))}| \chi_{A_{\boldsymbol{j}+\boldsymbol{l},\boldsymbol{k}+\boldsymbol{n}}}(\boldsymbol{x})
	\end{equation}
	and
	\begin{equation}\label{conv-b-2}
	|\Psi_{\boldsymbol{j}} * \boldsymbol{s}_{2m;\boldsymbol{j}+\boldsymbol{l},\boldsymbol{k}}(\boldsymbol{x})|\leq C_R \prod\limits_{i=1}^d 2^{-\alpha_i|l_i|} (1+2^{\min\{j_i,j_i+l_i\}}|x_i-x_{j_i+l_i,k_i}|)^{-R},
	\end{equation}
	where $\alpha_i=1$ if $l_i\geq 0$, $\alpha_i=2m-1$ if $l_i<0$. The set $A_{\boldsymbol{j}+\boldsymbol{l},\boldsymbol{n}+\boldsymbol{k}}$ is the cross product of sets $A_{j+l,n+k}$ such that $A_{j+l,n+k}\subset \cup_{|u-k|\lesssim 2^{l_+}} I_{j+l_+,u+n}$.
\end{lem}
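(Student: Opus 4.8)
The plan is to reduce the multivariate estimate to the univariate Lemma A.11 of \cite{DU2019} by fully exploiting the tensor-product structure on both sides. In the mixed-smoothness setting the local means factorize as $\Psi_{\jb}(\boldsymbol{x})=\prod_{i=1}^d\Psi_{j_i}(x_i)$, where each univariate kernel $\Psi_{j_i}$ retains $L$ vanishing moments in its own variable, and by construction $\boldsymbol{s}_{2m;\jb+\lb,\kb}(\boldsymbol{x})=\prod_{i=1}^d s_{2m;j_i+l_i,k_i}(x_i)$. Hence the convolution itself factorizes,
\[
\Psi_{\jb}*\boldsymbol{s}_{2m;\jb+\lb,\kb}(\boldsymbol{x})=\prod_{i=1}^d\big(\Psi_{j_i}*s_{2m;j_i+l_i,k_i}\big)(x_i),
\]
so it suffices to bound each of the $d$ one-dimensional convolutions separately and then take the product.

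For each coordinate $i$ I would invoke the univariate Lemma A.11 of \cite{DU2019} applied to $\Psi_{j_i}*s_{2m;j_i+l_i,k_i}$. That lemma splits into two regimes governed by the sign of $l_i$. If $l_i\geq 0$ the atom sits on the finer scale $j_i+l_i\geq j_i$ and only the rate $\alpha_i=1$ is available; if $l_i<0$ the atom is coarser and the $L$ vanishing moments of $\Psi_{j_i}$, paired with the smoothness $2m-1$ of the Faber spline, produce the stronger decay $\alpha_i=2m-1$. These are exactly the exponents in \eqref{conv-b-1} and \eqref{conv-b-2}. The argument also records the direction type: the univariate atom $s_{2m;j_i+l_i,k_i}$ is expanded in its shifted building blocks $v_{2m;j_i+l_i,k_i+n_i}$ using the dual Chui-Wang coefficients $a_{n_i}^{(m)}$ of Theorem \ref{dual-wav-repr} when $j_i+l_i\geq 0$, and the cardinal-spline coefficients $c_{n_i}^{(m)}$ of \eqref{card-sp} when $j_i+l_i=-1$; the tensorization of these coefficients is the $a_{\boldsymbol{n}}^{(m,e(\jb))}$ appearing in the statement.

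Multiplying the $d$ univariate estimates yields the claim. For \eqref{conv-b-1} the product of univariate characteristic functions $\prod_{i=1}^d\chi_{A_{j_i+l_i,k_i+n_i}}(x_i)$ is by definition the indicator of the cross product $A_{\jb+\lb,\kb+\nb}=\prod_{i=1}^d A_{j_i+l_i,k_i+n_i}$, and after interchanging the finite product of coefficient sums with the sum over $\nb$ one obtains $\sum_{\nb\in\Z}|a_{\boldsymbol{n}}^{(m,e(\jb))}|\chi_{A_{\jb+\lb,\kb+\nb}}(\boldsymbol{x})$; the inclusion $A_{j+l,n+k}\subset\cup_{|u-k|\ls 2^{l_+}}I_{j+l_+,u+n}$ carries over verbatim from the univariate support bookkeeping, coordinate by coordinate. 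For \eqref{conv-b-2} the product of the univariate polynomial decay factors is already of the stated tensorized form, with $\min\{j_i,j_i+l_i\}$ being the coarser of the two scales in direction $i$.

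The multiplication and reindexing are routine; the genuine content is the univariate Lemma A.11, so the main obstacle — should one wish to prove the statement from scratch rather than cite it — lies one dimension down. There the delicate case is $l_i<0$, where one must test the $L$ vanishing moments of $\Psi_{j_i}$ against the degree-$(2m-1)$ piecewise polynomial spline to extract the full factor $2^{-(2m-1)|l_i|}$ while simultaneously using the exponential decay of the coefficients $a_{n_i}^{(m)}$ to keep the sum $\sum_{n_i}|a_{n_i}^{(m)}|\chi_{A_{j_i+l_i,k_i+n_i}}$ summable and genuinely localized to the support sets $A$.
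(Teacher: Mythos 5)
Your proof is correct and is exactly the argument the paper intends: the paper states this lemma \emph{without proof}, calling it ``a multivariate analog of Lemma A.11 from \cite{DU2019}'', and your factorization of the convolution via the tensor structure of $\Psi_{\jb}$ and $\boldsymbol{s}_{2m;\jb+\lb,\kb}$, followed by a coordinate-wise application of that univariate lemma and multiplication of the resulting bounds, is precisely the implicit justification. Your coefficient bookkeeping (dual Chui--Wang coefficients $a_{n_i}^{(m)}$ in directions with $j_i+l_i\geq 0$, cardinal-spline coefficients $c_{n_i}^{(m)}$ where $j_i+l_i=-1$) is in fact slightly more careful than the lemma's own notation, which writes $e(\jb)$ where $e(\jb+\lb)$ is meant.
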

\begin{prop}\label{charact2} \begin{itemize}
		Let $m\in \N$ and $m\geq 2$.
		\item [(i)] 	For $0< p,\theta\leq \infty$, $p>1/2m$, $1/p<r<2m$ and  $f \in S_{p,\theta}^rB$  the inequality
		\begin{equation}\label{ch2}
		\|\lambda^{(m)}(f) \|_{s_{p,\theta}^rb}\lesssim \|f\|_{S_{p,\theta}^rB}
		\end{equation}
		holds.
		\item [(ii)] 	For $1/2m< p,\theta\leq \infty$, $p\neq \infty$, $\max\{1/p,1/\theta\}<r<2m$ and  $f \in S_{p,\theta}^rF$ the inequality
		\begin{equation}\label{chf2}
		\|\lambda^{(m)}(f) \|_{s_{p,\theta}^rf}\lesssim \|f\|_{S_{p,\theta}^rF}
		\end{equation}
		holds.
	\end{itemize}
\end{prop}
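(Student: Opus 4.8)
The plan is to establish the two analysis (lower) estimates \eqref{ch2} and \eqref{chf2} in parallel, reducing both to a single family of pointwise bounds on the coefficients and then splitting according to whether one sums first over the scales (the $B$-case) or integrates first (the $F$-case). I would begin by recalling from Appendix~A that the norm of $S_{p,\theta}^rB$, resp.\ $S_{p,\theta}^rF$, admits a characterization by mixed moduli of smoothness of order $2m$ (legitimate precisely because $r<2m$), and, equivalently, a local means characterization with compactly supported kernels $\Psi_{\jb}$ having $L>2m$ vanishing moments in each coordinate. The sequence norms $\|\cdot\|_{s_{p,\theta}^rb}$ and $\|\cdot\|_{s_{p,\theta}^rf}$ are, by definition, the discrete $\ell_\theta(\ell_p)$- resp.\ $L_p(\ell_\theta)$-norms of the weighted coefficients $2^{|\jb|_1 r}\lambda_{2m;\jb,\kb}(f)$ distributed over the boxes $K^{(m)}_{\jb,\kb}$, so it suffices to control these weighted coefficients by $\Psi_{\jb}\ast f$.

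The first genuine step is to unwind the coefficient. Since the weights $N_{2m}(\lb+\boldsymbol 1)$ are bounded and $\iota_m$ is finite,
\[
|\lambda_{2m;\jb,\kb}(f)|\ls \sum_{\lb\in\iota_m}\big|\Delta^{2m,e(\jb)}_{2^{-\jb-1}}f(\boldsymbol x_{\jb;\kb,\lb})\big|,
\]
so that each coefficient is a finite average of $2m$-th order mixed differences of $f$ sampled on the grid of mesh $2^{-\jb}$ in the active directions $e(\jb)$. I would then insert a Littlewood--Paley decomposition $f=\sum_{\jb''\in\N_0^d}f_{\jb''}$, with $f_{\jb''}=\Psi_{\jb''}\ast f$ up to a reproducing convolution, and bound the sampled $\ell_p$-norm over $\kb$ of $\Delta^{2m,e(\jb)}_{2^{-\jb-1}}f_{\jb''}$ by its $L_p$-norm via a Nikol'skii--Plancherel--Polya sampling inequality for the frequency-localized pieces $f_{\jb''}$; in the $F$-case the pointwise control is instead supplied by the Peetre maximal function of $f_{\jb''}$. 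Here the assumption $r>1/p$ (together with $p>1/2m$) enters to guarantee the embedding into $C_0(\R)$ and the validity of the sampling/maximal estimates in the full quasi-Banach range.

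The decisive estimate is the scale interaction. Applying the mixed difference to a single block $f_{\jb''}$ yields, coordinatewise, two regimes: for the coarse blocks ($j''_i\le j_i$) the $2m$ vanishing moments of $\Delta^{2m}_{2^{-j_i}}$ give a Taylor gain $2^{-2m(j_i-j''_i)}$, while for the fine blocks ($j''_i\ge j_i$) the difference is merely bounded but the sampling normalization contributes a factor $2^{(j_i-j''_i)/p}$. Combined with the smoothness weight ratio $2^{(|\jb|_1-|\jb''|_1)r}$, the per-coordinate factors become $2^{-(2m-r)(j_i-j''_i)}$ on the coarse side and $2^{-(r-1/p)(j''_i-j_i)}$ on the fine side; both are summable geometric series exactly on the open range $1/p<r<2m$. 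These are precisely the decay rates that appear, in dual form, through the exponents $\alpha_i\in\{1,2m-1\}$ of Lemma \ref{convol}, and the exponentially decaying shift coefficients $a^{(m)}_{\nb}$ of Theorem \ref{dual-wav-repr} are absorbed harmlessly into the constants. Summing over $\jb''$ and invoking the triangle inequality in $\ell_{\min\{1,p,\theta\}}$ then collapses the double scale sum to $\|(2^{|\jb|_1 r}\Psi_{\jb}\ast f)\|_{\ell_\theta(L_p)}\asymp\|f\|_{S_{p,\theta}^rB}$, proving (i); for (ii) one applies instead the Fefferman--Stein vector-valued maximal inequality in $L_p(\ell_\theta)$ after the maximal-function bound, yielding \eqref{chf2} under $\max\{1/p,1/\theta\}<r<2m$.

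I expect the main obstacle to be the $F$-case in the quasi-Banach regime $p,\theta<1$: one must commute the sampling (or maximal) operation with the two opposite scale regimes while keeping the exponents in the Fefferman--Stein inequality admissible, which is exactly what forces $p>1/2m$ and the careful use of $\max\{1/p,1/\theta\}<r$, and requires the geometric decay above to hold uniformly across all coordinate patterns $e(\jb)\subseteq[d]$. Keeping track of the $d$-fold tensor product of the two regimes, so that each of the $2^d$ patterns contributes a convergent product of geometric series, is the book-keeping core of the argument, whereas the boundedness of the weights $N_{2m}$ and the summability of the $a^{(m)}_{\nb}$ are only routine.
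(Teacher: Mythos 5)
Your proposal is correct and follows essentially the same route as the paper: decompose $f$ into Littlewood--Paley/local-mean blocks, bound each coefficient by $2m$-th order mixed differences of the bandlimited pieces, exploit the two-regime scale interaction (moment gain $2^{-2m|l_i|}$ on the coarse side, sampling/Peetre loss of order $1/p$ on the fine side, implemented in the paper via the Peetre maximal function with an auxiliary parameter $a$ satisfying $1/p<a<r$, resp.\ $\max\{1/p,1/\theta\}<a<r$), and close the argument with the $u$-triangle inequality, geometric summation over scale shifts, and the scalar resp.\ vector-valued Peetre maximal inequalities. The only blemishes are cosmetic: your intermediate fine-block sampling factor should read $2^{(j''_i-j_i)/p}$ rather than $2^{(j_i-j''_i)/p}$ (your final exponent $2^{-(r-1/p)(j''_i-j_i)}$ is nonetheless correct), and in the quasi-Banach range one must keep the Peetre parameter strictly above $1/p$ (resp.\ $\max\{1/p,1/\theta\}$), which is harmless since $r$ exceeds these values strictly.
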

\begin{proof}
	We use the following pointwise representation of $f$ (formula (\ref{frep}))
	\begin{equation}\label{delta-char}
	f=\sum \limits_{\lb \in \Z} \delta_{\jb+\lb}[f], \ \ \jb \in \N_0^d.
	\end{equation}
Let us give a proof for the case $\theta<\infty$. For $\theta=\infty$ one can obtain the results by using similar technique with trivial modification. First we prove an additional inequality. We denote $F_{\jb,\lb}(\x):=\sum\limits_{\kb \in \Z} \lambda_{2m;\jb,\kb}(f)\left( \delta_{\jb+\lb}[f] \right) \chi_{\jb,\kb}(\x)$, $\x \in \R$.
	For $\x \in I_{\jb,\kb}$ we  have that
	\begin{equation}\label{f-oper}
	|F_{\jb,\lb}(\x)| \leq |\lambda_{2m;\jb,\kb}(f)\left( \delta_{\jb+\lb}[f] \right)| \leq \sum\limits_{\boldsymbol{s}\in \iota_m} \boldsymbol{N}_{2m}(\boldsymbol{s}+\boldsymbol{1}) |\Delta^{2m,e(\boldsymbol{j})}_{2^{-\boldsymbol{j}-1}}\delta_{\jb+\lb}[f](\boldsymbol{x}_{\boldsymbol{j};\boldsymbol{k},\boldsymbol{s}})|.
	\end{equation}
	
	Now we estimate difference $\Delta^{2m,e(\boldsymbol{j})}_{2^{-\boldsymbol{j}-1}}f(\boldsymbol{x}_{\boldsymbol{j};\boldsymbol{k},\boldsymbol{s}})$. We fixed direction $i$. If $i \in e(\boldsymbol{j})$ we do the following.
	By using Lemma \ref{peetre-ineq} we get for some bandlimited function $g$ with $\mathcal{F}g\subset [-A2^{j+l},B2^{j+l}]$
	$$
	|\Delta^{2m}_{2^{-j-1}} g(x_{j;k,s})|\lesssim \min\{1,2^{2ml}\} \max\{1,2^{al}\} P_{2^{l+j},a}g (x_{j;k,s}).
	$$
	From this inequality for $l<0$ and $|x-x_{j;k,s}|\leq C(m) 2^{-j}$ we get
	\begin{align}
	|\Delta^{2m}_{2^{-j-1}} g(x_{j;k,s})|& \lesssim 2^{2ml} P_{2^{l+j},a}g (x_{j;k,s})  \leq 2^{2ml} \sup\limits_{y \in \re} \frac{|g(y)|}{(1+2^{l+j}|y-x|)^a} (1+2^{l+j}|x-x_{j;k,s}|)^a\notag \\
	& \lesssim 2^{2ml} \sup\limits_{y \in \re} \frac{|g(y)|}{(1+2^{l+j}|y-x|)^a}=2^{2ml} P_{2^{l+j},a}g (x). \label{dif1}
	\end{align}
	
	For $l\geq 0$ and $|x-x_{j;k,s}|\leq C(m) 2^{-j}$ by using definition of the 2m-th order  difference we write
\begin{align*}
|\Delta^{2m}_{2^{-j-1}} g(x_{j;k,s})|& \lesssim \sup\limits_{|y|\lesssim 2^{-j}} |g(x_{j;k,s}+y)| \lesssim \sup\limits_{|y|\lesssim 2^{-j}} \frac{|g(x_{j;k,s}+y)|}{(1+2^j|y|)^a} \leq P_{2^j,a}(x_{j;k,s}) \\
&= \sup\limits_{y \in \re} \frac{|g(y)|}{(1+2^j|y-x|)^a}\frac{(1+2^j|y-x|)^a}{(1+2^j|y-x_{j;k,s}|)^a}  \\
&\leq \sup\limits_{y \in \re} \frac{|g(y)|}{(1+2^j|y-x|)^a} \frac{(1+2^j|y-x_{j;k,s}|)^a(1+2^j|x-x_{j;k,s}|)^a}{(1+2^j|y-x_{j;k,s}|)^a} \lesssim P_{2^j,a}(x) .
\end{align*}
Since for $l\geq 0$ we have
\begin{equation}\label{pin}
P_{2^j,a}(x)=\sup\limits_{y \in \re} \frac{|g(y)|}{(1+2^{j+l}|y-x|)^a} \frac{(1+2^{j+l}|y-x|)^a}{(1+2^{j}|y-x|)^a}  \leq 2^{la}P_{2^{l+j},a}g (x),
\end{equation}
then
\begin{equation}\label{dif2}
|\Delta^{2m}_{2^{-j-1}} g(x_{j;k,s})| \lesssim 2^{la}P_{2^{l+j},a}g (x).
\end{equation}

Let now $i \notin e(\boldsymbol{j})$. That means that $j=-1$. Let again $g$ be some bandlimited function. We consider only the case $l\geq 0$ because $g\equiv 0$ for $l<0$ and there is nothing to prove. So, for $l\geq 0$
\begin{align*}
|g(k)| \leq \sup \limits_{|y|\leq 1} |g(x+y)| =	\sup \limits_{|y|\leq 1} \dfrac{|g(x+y)|}{(1+2^j|y|)^a} (1+2^j|y|)^a \lesssim P_{2^j,a}(x).
\end{align*}	
Using (\ref{pin}) we get
\begin{equation}\label{dif3}
|g(k)| \lesssim 2^{la}P_{2^{l+j},a}g (x).
\end{equation}

Applying the pointwise inequalities (\ref{dif1}), (\ref{dif2}) and (\ref{dif3})  to the right side of (\ref{f-oper})
	\begin{align}\label{f-oper-1}
|F_{\jb,\lb}(\x)| & \lesssim \sum\limits_{\boldsymbol{s}\in \iota_m} \boldsymbol{N}_{2m}(\boldsymbol{s}+\boldsymbol{1}) P_{2^{\jb+\lb},a} \delta_{\jb+\lb}[f] \prod \limits_{i \in e(\jb)} \min\{2^{2ml_i},1\} \max\{2^{al_i},1\} \notag\\
& \lesssim  P_{2^{\jb+\lb},a} \delta_{\jb+\lb}[f] \prod \limits_{i \in e(\jb)} \min\{2^{2ml_i},1\} \max\{2^{al_i},1\}
\end{align}

	Let us now prove part (i). From definition of the space of sequences $b_{p,\theta}^r$ by using the $u$-triangle inequality with $u=\min\{p,\theta,1\}$ we can write
	\begin{align}
	\|\lambda^{(m)}(f)\|_{s_{p,\theta}^rb}& = 	\Bigg(\sum\limits_{\jb \in \N^d_{-1}} 2^{\theta r |\jb|_1} \Big\|\sum\limits_{\kb \in \Z} \lambda_{2m;\jb,\kb}(f)(f) \chi_{\jb,\kb} \Big\|_p^\theta \Bigg)^{1/\theta} \notag\\
	&=	\Bigg(\sum\limits_{\jb \in \N^d_{-1}} 2^{\theta r |\jb|_1} \Big\|\sum\limits_{\kb \in \Z} \lambda_{2m;\jb,\kb}(f)\Big(\sum \limits_{\lb \in \Z} \delta_{\jb+\lb}[f] \Big) \chi_{\jb,\kb} \Big\|_p^\theta \Bigg)^{1/\theta} \notag\\
	& \leq \Bigg( \sum\limits_{\lb \in \Z} \Big( \sum\limits_{\jb\in \N^d_{-1}} 2^{\theta r |\jb|_1} \Big\|\sum\limits_{\kb \in \Z} \lambda_{2m;\jb,\kb}(f)\left( \delta_{\jb+\lb}[f] \right) \chi_{\jb,\kb} \Big\|_p^\theta \Big)^{u/\theta}\Bigg)^{1/u}. \label{lamb-norm}
	\end{align}
	
	By using Lemma \ref{peetre-ineq1} and inequality (\ref{f-oper-1}) we have that for $a>1/p$
	\begin{align*}
	\|F_{\jb,\lb}\|_p & \lesssim \prod \limits_{i\in e(\jb)}\min\{2^{2ml_i},1\} \max\{2^{al_i},1\} \, \|P_{2^{\jb+\lb},a}\delta_{\jb+\lb}[f]\|_p \\
	& \lesssim \prod \limits_{i\in e(\jb)} \min\{2^{2ml_i},1\} \max\{2^{al_i},1\} \, \|\delta_{\jb+\lb}[f]\|_p.
	\end{align*}
	
	We denote $\beta_i=a$ if $l_i\geq 0$ and $\beta_i=2m$ if $l_i<0$. Now taking into account Definition \ref{local-mean} we can proceed estimation (\ref{lamb-norm})
	\begin{align*}
	\|\lambda^{(m)}(f)\|_{s_{p,\theta}^rb}
	&\leq \Bigg(\sum\limits_{\lb \in \Z} \Big( \sum\limits_{\jb\in \N^d_{-1}} 2^{\theta r |\jb|_1} \|F_{\jb,\lb}\|_p^\theta \Big)^{u/\theta}\Bigg)^{1/u}\\
	&\lesssim \Bigg(\sum\limits_{\lb \in \Z} \Big(\prod\limits_{i=1}^d 2^{l_i(\beta_i-r)} \Big)^u \Big( \sum\limits_{\jb\in \N^d_{-1}} 2^{\theta r (|\jb|_1+|\lb|_1)} \|\delta_{\jb+\lb}[f]\|_p^\theta \Big)^{u/\theta}\Bigg)^{1/u}\\
	&\leq\Big(\sum\limits_{\lb \in \Z} \prod\limits_{i=1}^d 2^{l_i(\beta_i-r)u} \Big)^{1/u}\|f\|_{S_{p,\theta}^rB}.
	\end{align*}
	Now if $r$ satisfies $1/p<a<r<2m$ with $p>1/2m$ we have that
	for $l_i\in \zz$ holds $2^{(\beta_i-r)l_i}\leq 2^{-\gamma_i |l_i|}$ where $\gamma_i=r-a$ if $l_i\geq 0$ and $\gamma_i=2m-r$ if $l_i<0$. It implies that the series $\sum_{\lb \in \Z} 2^{- u \boldsymbol{\gamma} \cdot |\lb|}$
	 converges and the inequality (\ref{ch2}) holds.

	Now we prove part (ii). We use the representation (\ref{delta-char}) and the $u$-triangle inequality
	\begin{align*}
	\|\lambda^{(m)}(f)\|_{s_{p,\theta}^{r}f}&=
	\Big\| \Big(\sum\limits_{\jb \in \N_{-1}^d} 2^{\theta r |\jb|_1} \Big|\sum\limits_{\kb \in \Z} \lambda_{2m;\jb,\kb}(f)(f) \chi_{\jb,\kb} \Big|^\theta \Big)^{1/\theta}\Big\|_p\\
	&=\Big\| \Big(\sum\limits_{\jb \in \N_{-1}^d} 2^{\theta r |\jb|_1} \Big|\sum\limits_{\kb \in \Z} \lambda_{2m;\jb,\kb}(f)\Big(\sum \limits_{\lb \in \Z} \delta_{\jb+\lb}[f]\Big) \chi_{\jb,\kb} \Big|^\theta \Big)^{1/\theta}\Big\|_p\\
	& \leq \Bigg(\sum \limits_{\lb \in \Z} \Big\| \Big(\sum\limits_{\jb \in \N_{-1}^d} 2^{\theta r |\jb|_1} \Big|\sum\limits_{\kb \in \Z} \lambda_{2m;\jb,\kb}(f)\Big( \delta_{\jb+\lb}[f]\Big) \chi_{\jb,\kb} \Big|^\theta \Big)^{1/\theta}\Big\|_p^u \Bigg)^{1/u}.
	\end{align*}
	By using inequality (\ref{f-oper-1}) and Lemma \ref{peetre-ineq2} we write for $a>\max\{1/p,1/\theta\}$
	\begin{align*}
	\|\lambda^{(m)}(f)\|_{s_{p,\theta}^{r}f}& \lesssim \Bigg(\sum \limits_{\lb \in \Z} \Big(\prod \limits_{i=1}^d2^{\beta_i l_i } \Big)^u \Big\| \Big(\sum\limits_{\jb \in \N_{-1}^d} 2^{\theta r |\jb|_1} \big(P_{2^{\jb+\lb},a}\delta_{\jb+\lb}[f]\big)^\theta \Big)^{1/\theta}\Big\|_p^u\Bigg)^{1/u} \\
	& =\Bigg(\sum \limits_{\lb \in \Z} \Big(\prod \limits_{i=1}^d2^{(\beta_i-r) l_i } \Big)^u\Big\| \Big(\sum\limits_{\jb \in \N_{-1}^d} 2^{\theta r (|\jb|_1+|\lb|_1)} \big(\delta_{\jb+\lb}[f]\big)^\theta \Big)^{1/\theta}\Big\|_p^u\Bigg)^{1/u}\\
	&\leq\Big(\sum\limits_{\lb \in \Z} \prod\limits_{i=1}^d 2^{l_i(\beta_i-r)u} \Big)^{1/u}\|f\|_{S_{p,\theta}^rF},
	\end{align*}
	Due to the choice of the parameter $\max\{1/p,1/\theta\}<a<r<2m$ with $p,\theta>1/2m$ the series in the last inequality is convergent (see explanation above) and inequality (\ref{chf2}) holds.
\end{proof}

 Let $\lambda^{(m)}:=\{\lambda_{2m;\jb,\kb}: \, \boldsymbol{j}\in \N_{-1}^d$, $\boldsymbol{k} \in \Z \}$ be some sequence of real numbers that satisfy certain conditions (later we specify these conditions). We denote
\begin{equation}\label{series}
f:=\sum \limits_{\jb \in \N^d_{-1}} \sum \limits_{\kb \in \Z} \lambda_{2m;\jb,\kb} \boldsymbol{s}_{2m;\jb,\kb}.
\end{equation}
\begin{prop}\label{charact1}
Let $0< p,\theta\leq \infty$, $m \in \N$ and $m\geq 2$, $\max\{1/p-1,0\}<r<2m-1+1/p$ and a sequence $\lambda^{(m)} \in s_{p,\theta}^rb$. Then the series (\ref{series}) converges unconditionally in the space $S_{p,\theta}^{r-\varepsilon}B$  for every $\varepsilon>0$. If $\max\{p,\theta\}<\infty$ we have unconditional convergence in the space $S_{p,\theta}^{r}B$. Moreover, the following inequality holds
\begin{equation}\label{ch1}
\|f\|_{S_{p,\theta}^rB}  \lesssim \|\lambda^{(m)} \|_{s_{p,\theta}^rb}.
\end{equation}
\end{prop}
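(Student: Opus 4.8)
The plan is to prove the synthesis estimate (\ref{ch1}) as the counterpart of the analysis estimate from Proposition~\ref{charact2}, and then to upgrade it to the stated unconditional convergence. The starting point is a local means characterization of $S_{p,\theta}^rB$: one fixes kernels $\Psi_{\jb}$, $\jb\in\N_0^d$, with sufficiently many vanishing moments $L>r$ (the same family that underlies Lemma~\ref{convol}) so that
$$
\|f\|_{S_{p,\theta}^rB}\asymp\Big(\sum_{\jb\in\N_0^d}2^{\theta r|\jb|_1}\|\Psi_{\jb}*f\|_p^\theta\Big)^{1/\theta}.
$$
Applying $\Psi_{\jb}$ termwise to the series (\ref{series}) and reindexing the dyadic level by $\jb'=\jb+\lb$ gives
$$
\Psi_{\jb}*f=\sum_{\lb:\,\jb+\lb\in\N_{-1}^d}\sum_{\kb\in\Z}\lambda_{2m;\jb+\lb,\kb}\,\big(\Psi_{\jb}*\boldsymbol{s}_{2m;\jb+\lb,\kb}\big),
$$
and I would insert the pointwise bound (\ref{conv-b-1}) from Lemma~\ref{convol}, obtaining
$$
|\Psi_{\jb}*f(\x)|\lesssim\sum_{\lb}\prod_{i=1}^d2^{-\alpha_i|l_i|}\sum_{\kb}|\lambda_{2m;\jb+\lb,\kb}|\sum_{\nb}|a_{\nb}^{(m,e(\jb))}|\,\chi_{A_{\jb+\lb,\kb+\nb}}(\x),
$$
with $\alpha_i=1$ if $l_i\ge0$ and $\alpha_i=2m-1$ if $l_i<0$.

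Next I would take the $L_p$ quasi-norm, multiply by $2^{r|\jb|_1}$, and form the $\ell_\theta(\jb)$ norm; with $u=\min\{p,\theta,1\}$ the $u$-triangle inequality pulls the outer sum over $\lb$ in front, exactly as in (\ref{lamb-norm}). For each fixed $\lb$ the inner object is $\big\|\sum_{\kb}|\lambda_{2m;\jb+\lb,\kb}|\sum_{\nb}|a_{\nb}^{(m,e(\jb))}|\chi_{A_{\jb+\lb,\kb+\nb}}\big\|_p$. The exponentially decaying coefficients $a_{\nb}^{(m)}$ let me carry out the $\nb$-summation after a shift of the $\kb$-index, and the boxes $A_{\jb+\lb,\kb+\nb}$ — each a union of $\lesssim2^{|\lb_+|_1}$ fine dyadic boxes at level $\jb+\lb_+$ (with $\lb_+$ the componentwise positive part) of total measure $\asymp2^{-|\jb|_1}$ — have bounded overlap, so their $\kb$-sum is comparable, through an $L_p$-rescaling between the coarse scale $2^{-|\jb|_1}$ carried by $A$ and the fine scale $2^{-|\jb+\lb|_1}$ carried by $\chi_{\jb+\lb,\kb}$, to $\big\|\sum_{\kb}|\lambda_{2m;\jb+\lb,\kb}|\chi_{\jb+\lb,\kb}\big\|_p$. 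This reduces the whole expression to $\|\lambda^{(m)}\|_{s_{p,\theta}^rb}$ times a sum over $\lb$ of explicit geometric factors.

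Collecting the kernel decay $2^{-\alpha_i|l_i|}$, the $L_p$-rescaling, and the smoothness weight ratio $2^{r(|\jb|_1-|\jb+\lb|_1)}=\prod_i2^{-rl_i}$, the sum over $\lb$ factors into one-dimensional geometric series. In the directions $l_i\ge0$ the net ratio is of the form $2^{(\max\{1/p-1,0\}-r)|l_i|}$ (the $\max$ reflecting the $\ell_p$-cost $2^{l_i}$ of summing $2^{l_i}$ neighbouring coefficients when $p\ge1$ versus the spreading loss $2^{l_i/p}$ when $p<1$), so convergence forces $r>\max\{1/p-1,0\}$; in the directions $l_i<0$ the net ratio is $2^{(r-(2m-1)-1/p)|l_i|}$, so convergence forces $r<2m-1+1/p$. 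Both series converge precisely in the stated window $\max\{1/p-1,0\}<r<2m-1+1/p$, and summing them gives a finite constant, which yields (\ref{ch1}) first for finite partial sums and then, by a Fatou passage, for the full series.

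Finally, unconditional convergence is read off from (\ref{ch1}) applied to tails: since $\lambda^{(m)}\in s_{p,\theta}^rb$, the partial sums over any increasing sequence of finite index sets are Cauchy in $S_{p,\theta}^rB$ whenever finitely supported sequences are dense in $s_{p,\theta}^rb$, i.e.\ when $\max\{p,\theta\}<\infty$, and the bound being on absolute values makes the limit independent of the ordering. If $p=\infty$ or $\theta=\infty$ density fails at smoothness $r$; there I would pass to $r-\varepsilon$, use the elementary embedding $s_{p,\theta}^rb\hookrightarrow s_{p,\theta}^{r-\varepsilon}b$ in which finite sequences are dense, and obtain unconditional convergence in $S_{p,\theta}^{r-\varepsilon}B$ together with (\ref{ch1}) at smoothness $r$. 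The main obstacle is the middle step: the precise $L_p$-comparison between the smeared boxes $A_{\jb+\lb,\kb+\nb}$ and the fine characteristic functions $\chi_{\jb+\lb,\kb}$, carried out simultaneously with the $\nb$-summation, especially in the quasi-Banach range $p<1$ where the overlap and spreading factors must be tracked exactly; it is this bookkeeping that pins the admissible smoothness to $\max\{1/p-1,0\}<r<2m-1+1/p$.
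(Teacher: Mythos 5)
Your proof of the core estimate (\ref{ch1}) is, in substance, the paper's own argument: the same local-means characterization, the same reindexing of levels $\jb'=\jb+\lb$, the same insertion of the kernel bound (\ref{conv-b-1}) from Lemma \ref{convol}, the $u$-triangle inequality with $u=\min\{p,\theta,1\}$, the treatment of the $\nb$-sum via the exponential decay of the $a^{(m)}_{\nb}$, and the same box-geometry bookkeeping (H\"older for $p\ge 1$, the embedding $\ell_p\hookrightarrow\ell_1$ for $p<1$, bounded overlap of the sets $A_{\jb+\lb,\kb+\nb}$ of measure $\asymp 2^{-|\jb|_1}$). Your per-direction geometric factors $2^{(\max\{1/p-1,0\}-r)|l_i|}$ for $l_i\ge 0$ and $2^{(r-(2m-1)-1/p)|l_i|}$ for $l_i<0$ agree exactly with the exponents the paper obtains, and they pin down the window $\max\{1/p-1,0\}<r<2m-1+1/p$ for the same reasons. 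This part is correct.

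The gap is in your unconditional-convergence step when $p=\infty$. Your tail argument for $\max\{p,\theta\}<\infty$ is the standard one and is fine, and it also survives the $\varepsilon$-loss when $\theta=\infty$ but $p<\infty$, since the factor $2^{-\varepsilon|\jb|_1}$ makes the $\jb$-tails summable while $p<\infty$ makes the $\kb$-tails small level by level. But for $p=\infty$ the claim that finitely supported sequences are dense in $s^{r-\varepsilon}_{p,\theta}b$ (or that the embedded image of $s^{r}_{p,\theta}b$ lies in their closure) is false: the $\varepsilon$-loss gains only in the $\jb$-direction and nothing in the $\kb$-direction, where the $L_\infty$ norm of a fixed level is $\sup_{\kb}|\lambda_{\jb,\kb}|$. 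Concretely, $\lambda_{\jb,\kb}=2^{-r|\jb|_1}$ for all $\kb$ belongs to $s^{r}_{\infty,\infty}b$ with norm $1$, yet its distance in $s^{r-\varepsilon}_{\infty,\theta}b$ to \emph{every} finitely supported sequence is at least $1$ (look at the level $\jb=\boldsymbol{0}$). So for $p=\infty$ your route produces no Cauchy property of the finite partial sums, and it is not clear that norm convergence of the partial-sum net can hold at all for such $\lambda$, since in a sup-norm based space the contribution near spatial infinity is never captured by finite sections. Note that the paper itself does not prove this part either: it defers exactly this point to the one-dimensional argument of \cite[Proposition 4.2]{DU2019}. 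Whatever one does for $p=\infty$, it must be something other than density of finite sequences (for instance a weaker notion of convergence), and your proposal as written does not close this case.
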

\begin{proof}	
First we prove the inequality (\ref{ch1}) for the case $\theta<\infty$. For $\theta=\infty$ the proof is similar.
We denote $f_{\jb}:=\sum \limits_{\kb \in \Z} \lambda_{2m;\jb,\kb} \boldsymbol{s}_{2m;\jb,\kb}$ for $\jb \in \N^d_{-1}$. Then
\begin{equation}\label{repr}
f=\sum \limits_{\lb \in \Z} f_{\jb+\lb}.
\end{equation}
By using characterization of Besov spaces via local means (Theorem \ref{local-mean})
and $u$-triangle inequality with $u:=\min\{p,\theta,1\}$ we have
\begin{align*}
\|f\|_{S_{p,\theta}^rB}&\asymp\Big( \sum\limits_{\jb \in \N^d_0} 2^{\theta r |\jb|_1 } \|\Psi_{\jb} * f\|_p^\theta \Big)^{1/\theta}  \\
&=\Big( \sum\limits_{\jb \in \N^d_0} 2^{\theta r |\jb|_1 } \Big\|\Psi_{\jb} * \Big(\sum \limits_{\lb \in \Z} \sum \limits_{\kb \in \Z} \lambda_{2m;\jb+\lb,\kb} \boldsymbol{s}_{\jb+\lb,\kb} \Big)\Big\|_p^\theta \Big)^{1/\theta} \\
& \leq \Big( \sum \limits_{\lb \in \Z} \Big( \sum\limits_{\jb \in \N^d_0} 2^{\theta r |\jb|_1} \Big\| \sum \limits_{\kb \in \Z} \lambda_{2m;\jb+\lb,\kb} (\Psi_{\jb}*\boldsymbol{s}_{\jb+\lb,\kb}) \Big\|_p^\theta   \Big)^{u/\theta}\Big)^{1/u}.
\end{align*}
By using inequality (\ref{conv-b-1}) we can proceed for $v=\min\{p,1\}$
\begin{align}
\|f\|_{S_{p,\theta}^rB}& \lesssim \Bigg(\sum \limits_{\lb \in \Z} \Bigg( \sum\limits_{\jb \in \N^d_0} 2^{\theta r \jb} \Big\| \sum \limits_{\kb \in \Z} \lambda_{2m;\jb+\lb,\kb} \, 2^{-\boldsymbol{\alpha} \cdot|\lb|} \sum \limits_{\nb \in \Z} |a_{\nb}| \chi_{A_{\jb+\lb,\kb+\nb}}(\cdot) \Big\|_p^\theta   \Bigg)^{u/\theta} \Bigg)^{1/u}\notag \\
& \leq \Bigg( \sum \limits_{\lb \in \Z} \Bigg( \sum\limits_{\jb \in \N^d_0} 2^{\theta r |\jb|_1} 2^{-\boldsymbol{\alpha} \cdot|\lb| \theta} \Big( \sum \limits_{\nb \in \Z} |a_{\nb}|^v \Big\| \sum \limits_{\kb \in \Z} \lambda_{2m;\jb+\lb,\kb} \, \chi_{A_{\jb+\lb,\kb+\nb}}(\cdot) \Big\|^v_p\Big)^{\theta/v}  \Bigg)^{u/\theta}\Bigg)^{1/u} .  \label{norm}
\end{align}
Further we consider the following norm $\big\| \sum_{\kb \in \Z} \lambda_{2m;\jb+\lb,\kb} \,  \chi_{A_{\jb+\lb,\kb+\nb}}(\cdot) \big\|_p$. For $\x \in \R$ since $A_{\jb+\lb,\nb+\kb}\subset \bigcup_{|\tb-\kb|\lesssim 2^{\lb_+}} I_{\jb+\lb_+,\tb+\nb}$ we can write
\begin{align*}
\Big|\sum \limits_{\kb \in \Z} \lambda_{2m;\jb+\lb,\kb} \,  \chi_{A_{\jb+\lb,\kb+\nb}}(\x) \Big|^p &\leq \Big|\sum \limits_{\kb \in \Z} |\lambda_{2m;\jb+\lb,\kb}| \,  \chi_{A_{\jb+\lb,\kb+\nb}}(\x) \Big|^p\\
&\leq \Big|\sum \limits_{\kb \in \Z} |\lambda_{2m;\jb+\lb,\kb}| \sum \limits_{\tb \in G_{\lb}(\kb)} \chi_{I_{\jb+\lb_+,\tb+\nb}}(\x) \Big|^p,
\end{align*}
where $G_{\lb}(\kb):=\{\tb: \, |\tb-\kb|\lesssim 2^{\lb_+}\}$ with $|G_{\lb}(\kb)|\asymp 2^{\lb+}$. By changing order of summation and on the viewpoint that segments $I_{\jb+\lb_+,\tb+\nb}$ do not intersect for different $\tb $ we have
\begin{align*}
\Big|\sum \limits_{\kb \in \Z} \lambda_{2m;\jb+\lb,\kb} \, \chi_{A_{\jb+\lb,\kb+\nb}}(\x) \Big|^p &\leq \Big|\sum \limits_{\tb \in \Z} \chi_{I_{\jb+\lb_+,\tb+\nb}}(\x) \sum \limits_{\kb \in G_{\lb}(\tb)} |\lambda_{2m;\jb+\lb,\kb}|  \Big|^p\\
&=\sum \limits_{\tb \in \Z} \chi_{I_{\jb+\lb_+,\tb+\nb}}(\x) \Big( \sum \limits_{\kb \in G_{\lb}(\tb)} |\lambda_{2m;\jb+\lb,\kb}| \Big)^p.
\end{align*}
From the H\"{o}lder inequality for $p>1$ we obtain
$$
\Big|\sum \limits_{\kb \in \Z} \lambda_{2m;\jb+\lb,\kb} \,  \chi_{A_{\jb+\lb,\kb+\nb}}(\x) \Big|^p \lesssim   2^{|\lb_+|_1(p-1)} \sum \limits_{\tb \in \Z} \chi_{I_{\jb+\lb_+,\tb+\nb}}(\x) \sum \limits_{\kb \in G_{\lb}(\tb)} |\lambda_{2m;\jb+\lb,\kb}|^p.
$$
For $p<1$ we use the embedding $l_p \hookrightarrow l_1$ to get
$$
\Big|\sum \limits_{\kb \in \Z} \lambda_{2m;\jb+\lb,\kb} \, \chi_{A_{\jb+\lb,\kb+\nb}}(\x) \Big|^p \leq  \sum \limits_{\tb \in \Z} \chi_{I_{\jb+\lb_+,\tb+\nb}}(\x) \sum \limits_{\kb \in G_{\lb}(\tb)} |\lambda_{2m;\jb+\lb,\kb}|^p.
$$
By using last inequality we can write for the norm
\begin{align*}
\Big\|\sum \limits_{\kb \in \Z} \lambda_{2m;\jb+\lb,\kb} \,  \chi_{A_{\jb+\lb,\kb+\nb}} \Big\|_p^p &= \int\limits_{\R} \Big|\sum \limits_{\kb \in \Z} \lambda_{2m;\jb+\lb,\kb} \, \chi_{A_{\jb+\lb,\kb+\nb}}(\x) \Big|^p \, d\x \\
& \lesssim  2^{|\lb_+|_1(p-1)_+}  \sum \limits_{\tb \in \Z} \int\limits_{\R} \chi_{I_{\jb+\lb_+,\tb+\nb}} (\x) \, d\x \sum \limits_{\kb \in G_{\lb}(\tb)} |\lambda_{2m;\jb+\lb,\kb}|^p\\
&=2^{|\lb_+|_1(p-1)_+} 2^{-|\jb|_1-|\lb_+|_1} \sum \limits_{\tb \in \Z}\sum \limits_{\kb\in G_{\lb}(\tb)} |\lambda_{2m;\jb+\lb,\kb}|^p\\
& \asymp 2^{|\lb_+|_1(p-1)_+}2^{-|\jb|_1} \sum \limits_{\kb \in \Z}|\lambda_{2m;\jb+\lb,\kb}|^p.
\end{align*}

 By using this inequality we can continue the estimation of (\ref{norm})
\begin{align*}
\|f\|_{S_{p,\theta}^rB}& \lesssim   \Bigg(  \sum \limits_{\lb \in \Z} \Bigg( \sum\limits_{\jb \in \N^d_0} 2^{\theta r |\jb|_1-\boldsymbol{\alpha} \cdot |\lb| \theta }  \Big( \sum \limits_{\nb \in \Z} |a_{\nb}|^v 2^{v(|\lb_+|_1(1-1/p)_+-|\jb|_1/p)} \Big(\sum \limits_{\kb \in \Z}|\lambda_{2m;\jb+\lb,k}|^p \Big)^{\frac{v}{p}} \Big)^{\frac{\theta}{v}}  \Bigg)^{\frac{u}{\theta}} \Bigg)^{\frac{1}{u}} \\
& = \Big(\sum \limits_{\nb \in \Z} |a_{\nb}|^v \Big)^{\frac{1}{v}} \cdot  \Bigg(  \sum \limits_{\lb \in \Z} \Bigg( \sum\limits_{\jb \in \N^d_0} 2^{\theta (r |\jb|_1-\boldsymbol{\alpha} \cdot|\lb| +|\lb_+|_1(1-1/p)_+-|\jb|_1/p)}  \Big(\sum \limits_{\kb \in \Z}|\lambda_{2m;\jb+\lb,\kb}|^p \Big)^{\frac{\theta}{p}} \Bigg)^{\frac{u}{\theta}}\Bigg)^{\frac{1}{u}}.
\end{align*}
From definition of coefficients $a_{\nb}$ we conclude that $\sum_{\nb \in \Z} |a_{\nb}|^v < \infty$, so we can proceed as follows
 \begin{align*}
 \|f\|_{S_{p,\theta}^rB}& \lesssim   \Bigg( \sum \limits_{\lb \in \Z} 2^{-\boldsymbol{\alpha} \cdot|\lb|u} 2^{|\lb_+|_1(1-1/p)_+u} \Bigg( \sum\limits_{\jb \in \N^d_0} 2^{\theta |\jb|_1 (r-1/p)}   \Big(\sum \limits_{\kb \in \Z}|\lambda_{2m;\jb+\lb,\kb}|^p \Big)^{\frac{\theta}{p}} \Bigg)^{\frac{u}{\theta}}\Bigg)^{\frac{1}{u}}\\
 &=\Bigg(\sum \limits_{\lb \in \Z} 2^{-\boldsymbol{\alpha}\cdot|\lb|} 2^{|\lb_+|_1(1-1/p)_+} 2^{-|\lb|(r-1/p)} \Big( \sum\limits_{\jb \in \N^d_0} 2^{\theta (|\jb|_1+|\lb|_1) (r-1/p)}   \Big(\sum \limits_{\kb \in \Z}|\lambda_{2m;\jb+\lb,\kb}|^p \Big)^{\frac{\theta}{p}} \Big)^{\frac{u}{\theta}}\Bigg)^{\frac{1}{u}}\\
 & \leq \Big( \sum \limits_{\lb \in \Z} 2^{-\boldsymbol{\alpha}\cdot|\lb|u} 2^{|\lb_+|_1(1-1/p)_+u} 2^{-|\lb|_1(r-1/p)u} \Big)^{\frac{1}{u}} \|\lambda^{(m)}\|_{s_{p,\theta}^rb}.
 \end{align*}
Due to the choice of the parameter $\max\{1/p-1,0\}<r<2m-1+1/p$ the series in the last inequality
 is convergent. Therefore, inequality (\ref{ch1}) holds.

The unconditional convergence of the series (\ref{series}) in the space $S_{p,\theta}^rB$ can be proved by using similar technique as in \cite[Proposition 4.2]{DU2019}
\end{proof}

Now we prove the analogue of this proposition for $F$-spaces.

\begin{prop}\label{charactf1}
	Let $0< p,\theta\leq \infty$, $p\neq \infty$, $m\in \N$ and $m\geq 2$, $\max\{1/\theta-1,1/p-1,0\}<r<2m-1$ and a sequence $\lambda^{(m)} \in s_{p,\theta}^rf$. Then the series (\ref{series})
	converges unconditionally in the space $S_{p,\theta}^{r-\varepsilon}F$  for every $\varepsilon>0$. If $\theta<\infty$ we have unconditional convergence in the space $S_{p,\theta}^rF$. Moreover, the following inequality holds
	$$
	\|f\|_{S_{p,\theta}^rF} \lesssim \|\lambda^{(m)} \|_{s_{p,\theta}^rf}.
$$
\end{prop}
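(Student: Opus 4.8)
The plan is to mirror the structure of the Besov-space proof in Proposition \ref{charact1}, replacing the outer $\ell_p$ summation over $\jb$ by the $F$-space quasi-norm, in which the $\ell_\theta$ summation sits \emph{inside} the $L_p$ norm. First I would set $f_{\jb}:=\sum_{\kb \in \Z} \lambda_{2m;\jb,\kb}\boldsymbol{s}_{2m;\jb,\kb}$ and write $f=\sum_{\lb \in \Z} f_{\jb+\lb}$ as in (\ref{repr}). Using the local-means characterization of $S_{p,\theta}^rF$ together with the $u$-triangle inequality ($u=\min\{p,\theta,1\}$) applied to the outer summation over the shift $\lb$, I reduce to estimating, for each fixed $\lb$, the quantity
$$
\Big\|\Big(\sum\limits_{\jb \in \N^d_0} 2^{\theta r|\jb|_1}\Big|\sum\limits_{\kb \in \Z} \lambda_{2m;\jb+\lb,\kb}\,(\Psi_{\jb}*\boldsymbol{s}_{\jb+\lb,\kb})\Big|^\theta\Big)^{1/\theta}\Big\|_p.
$$
Then I apply the pointwise bound (\ref{conv-b-1}) from Lemma \ref{convol}, which produces the factor $2^{-\boldsymbol{\alpha}\cdot|\lb|}$ and the sum over $\nb$ of $|a_{\nb}|\chi_{A_{\jb+\lb,\kb+\nb}}$, with the coefficients $a_{\nb}$ exponentially decaying.

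The core estimate is the bound on $\sum_{\kb}|\lambda_{2m;\jb+\lb,\kb}|\,\chi_{A_{\jb+\lb,\kb+\nb}}(\x)$ in terms of $\chi_{I_{\jb+\lb_+,\tb+\nb}}$, which I would reproduce exactly as in Proposition \ref{charact1}: since $A_{\jb+\lb,\nb+\kb}\subset \bigcup_{|\tb-\kb|\lesssim 2^{\lb_+}} I_{\jb+\lb_+,\tb+\nb}$, a change of summation order using the disjointness of the $I_{\jb+\lb_+,\tb+\nb}$ over distinct $\tb$ yields a clean characteristic-function sum. The key difference from the $B$-case is that the $\ell_\theta$ summation over $\jb$ cannot be pulled out before taking the $L_p$ norm, so I must instead use the maximal-function / vector-valued inequality for $F$-spaces, namely Lemma \ref{peetre-ineq2}, together with the $v$-triangle inequality ($v=\min\{p,1\}$, or $v=\min\{p,\theta,1\}$ as needed) to separate the exponentially decaying $\nb$-sum from the norm. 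After collecting exponents, the shift $\lb$ contributes a geometric factor $2^{-\boldsymbol{\alpha}\cdot|\lb|}2^{|\lb_+|_1(1-1/p)_+}2^{-|\lb|_1(r-1/p)}$ (possibly with a $1/\theta$ appearing in place of $1/p$ in the small-smoothness correction), and the $\jb$-sum reassembles into $\|\lambda^{(m)}\|_{s_{p,\theta}^rf}$ after reindexing $\jb \mapsto \jb+\lb$.

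The main obstacle I expect is verifying absolute convergence of the resulting geometric series in $\lb$ under the stated range $\max\{1/\theta-1,1/p-1,0\}<r<2m-1$. For $l_i\geq 0$ one has $\alpha_i=1$ and the correction term $(1-1/p)_+$ (or the $F$-analogue involving $\theta$), so convergence of the positive part of the sum forces the lower bound on $r$; for $l_i<0$ one has $\alpha_i=2m-1$, and the exponent $2m-1-r$ must be positive, which produces the upper bound $r<2m-1$. The delicate point specific to the $F$-setting is that the substitute for the Hölder step used for $p>1$ in the $B$-case must now be carried out \emph{inside} the $L_p(\ell_\theta)$ structure, so the correction exponent may involve $\min\{p,\theta\}$ rather than $p$ alone; getting this exponent right is what dictates whether the lower threshold is $1/p-1$ or $1/\theta-1$, and this is precisely why both appear in the hypothesis. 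Finally, the unconditional convergence in $S_{p,\theta}^{r-\varepsilon}F$ (and in $S_{p,\theta}^rF$ when $\theta<\infty$) follows from the same norm estimate applied to tail sums, exactly as in \cite[Proposition 4.2]{DU2019}.
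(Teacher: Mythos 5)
There is a genuine gap in your plan, and it sits exactly at the point where the $F$-case genuinely differs from the $B$-case. After applying (\ref{conv-b-1}) you are left with sums of the form $\sum_{\kb}|\lambda_{2m;\jb+\lb,\kb}|\,\chi_{A_{\jb+\lb,\kb+\nb}}$, and you propose to control the resulting $L_p(\ell_\theta)$ expression by Lemma \ref{peetre-ineq2}. But Lemma \ref{peetre-ineq2} is the Peetre maximal inequality: it applies only to systems of \emph{bandlimited} functions, i.e.\ it requires $\supp \mathcal{F}f_{\lb}\subset [-\boldsymbol{b}^{\lb},\boldsymbol{b}^{\lb}]$. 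Characteristic functions (and finite linear combinations of them) are not bandlimited, so this lemma simply cannot be invoked here. In the paper, Lemma \ref{peetre-ineq2} is used only in the \emph{converse} direction (Proposition \ref{charact2}), where the relevant objects are the blocks $\delta_{\jb+\lb}[f]$, which are bandlimited; in the direction you are proving, no such Fourier support information is available.

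What the paper does instead — and what your proposal is missing — is a two-step replacement for that maximal bound. First, it uses the \emph{polynomial-decay} estimate (\ref{conv-b-2}) rather than (\ref{conv-b-1}), because that form feeds directly into the lemma of Kyriazis \cite[Lem.\ 7.1]{Kyr2003}, which converts the $\kb$-sum pointwise into $2^{|\lb_+|_1/\tau}\bigl[M\bigl|\sum_{\kb}\lambda_{2m;\jb+\lb,\kb}\chi_{\jb+\lb,\kb}\bigr|^{\tau}\bigr]^{1/\tau}(\x)$ for $0<\tau\le 1$, $R>1/\tau$, where $M$ is the Hardy--Littlewood maximal operator. Second, it applies the Fefferman--Stein vector-valued maximal inequality (Lemma \ref{max-ineq}) — not the Peetre inequality — via the rescaling identity (\ref{m-ineq}), which forces $\tau<\min\{1,p,\theta\}$. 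This is the mechanism that produces the factor $2^{|\lb_+|_1/\tau}$ and hence, after summing the geometric series in $\lb$, the thresholds $r>1/\tau-1\ge\max\{1/p-1,1/\theta-1,0\}$ and $r<2m-1$. Your intuition about \emph{why} $1/\theta-1$ must appear (the maximal-operator step sees $\min\{p,\theta\}$) is correct, and your treatment of the upper bound $r<2m-1$ and of the unconditional convergence via \cite[Proposition 4.2]{DU2019} is fine; but without the Kyriazis-type pointwise maximal bound and the Fefferman--Stein inequality in place of Lemma \ref{peetre-ineq2}, the central estimate of the proof does not go through. (A patched version of your route is conceivable — one can dominate smeared characteristic functions by $\bigl[M|\cdot|^\tau\bigr]^{1/\tau}$ directly — but this amounts to reproving the Kyriazis lemma, and the Hardy--Littlewood/Fefferman--Stein machinery is still unavoidable.)
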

\begin{proof}
We use $u$-triangle inequality with $u=\min\{1,p,q\}$, representation (\ref{repr}) and  Theorem \ref{local-mean}
\begin{align*}
\|f\|_{S_{p,\theta}^{r}F} &\asymp
\Big\|\Big(\sum\limits_{\jb \in \N^d_0} 2^{\theta r |\jb|_1} |\Psi_{\jb}*f|^\theta   \Big)^{1/\theta}\Big\|_p \\
&= \Big\|\Big(\sum\limits_{\jb \in \N^d_0} 2^{\theta r |\jb|_1} \Big|\Psi_{\jb}*\Big(\sum \limits_{\lb \in \Z} \sum \limits_{\kb \in \Z} \lambda_{2m;\jb+\lb,\kb} \, \boldsymbol{s}_{2m;\jb+\lb,\kb} \Big)\Big|^\theta   \Big)^{1/\theta}\Big\|_p \\
& \leq \Bigg( \sum \limits_{\lb \in \Z} \Big\|\Big(\sum\limits_{\jb \in \N^d_0} 2^{\theta r |\jb|_1} \Big|\Psi_{\jb}*\Big( \sum \limits_{\kb \in \Z} \lambda_{2m;\jb+\lb,\kb} \, \boldsymbol{s}_{2m;\jb+\lb,\kb} \Big)\Big|^\theta   \Big)^{1/\theta}\Big\|_p^u\Bigg)^{1/u}\\
& \leq \Bigg( \sum \limits_{\lb \in \Z} \Big\|\Big(\sum\limits_{\jb \in \N^d_0} 2^{\theta r |\jb|_1} \Big( \sum \limits_{\kb \in \Z} |\lambda_{2m;\jb+\lb,\kb}| \, |\Psi_{\jb}*\boldsymbol{s}_{2m;\jb+\lb,\kb}| \Big)^\theta   \Big)^{1/\theta}\Big\|_p^u\Bigg)^{1/u}.
\end{align*}
By using inequality (\ref{conv-b-2}) we obtain
$$
\|f\|_{S_{p,\theta}^rF} \lesssim \Bigg(  \sum \limits_{\lb \in \Z} 2^{-\boldsymbol{\alpha} \cdot |\lb|u} \Big\|\Big(\sum\limits_{\jb \in \N^d_0} 2^{\theta r |\jb|_1} \Big( \sum \limits_{\kb \in \Z} |\lambda_{2m;\jb+\lb,\kb}| \prod\limits_{i=1}^d(1+2^{\min\{j_i,j_i+l_i\}}|x_i-x_{j_i+l_i,k_i}|)^{-R} \Big)^\theta   \Big)^{\frac{1}{\theta}}\Big\|_p^u \Bigg)^{\frac{1}{u}}.
$$
From the following property (\cite[Lem. 7.1]{Kyr2003})
$$
\sum\limits_{\kb \in \Z} |\lambda_{2m;\jb+\lb,\kb}| \prod\limits_{i=1}^d (1+2^{\min\{j_i,j_i+l_i\}}|x_i-x_{j_i+l_i,k_i}|)^{-R}\lesssim 2^{|\lb_+|_1/\tau} \Big[M\Big|\sum\limits_{\kb \in \Z} \lambda_{2m;\jb+\lb,\kb} \chi_{\jb+\lb,\kb}\Big|^\tau \Big]^{1/\tau}(\x)
$$
for $0<\tau \leq 1$ and $R>1/\tau$, we get
$$
\|f\|_{S_{p,\theta}^{r}F} \lesssim \Bigg(  \sum \limits_{\lb \in \Z} 2^{-\boldsymbol{\alpha} \cdot|\lb|u} 2^{u |\lb_+|_1/\tau} \Big\|\Big(\sum\limits_{\jb \in \N^d_0} 2^{\theta r |\jb|_1}  \Big[M\Big|\sum\limits_{\kb \in \Z} \lambda_{2m;\jb+\lb,\kb} \, \chi_{\jb+\lb,\kb}\Big|^\tau \Big]^{\theta/\tau} \Big)^{1/\theta}\Big\|_p^u \Bigg)^{1/u}.
$$
It is obvious that
	\begin{equation}\label{m-ineq}
\Big\|\Big( \sum \limits_{\lb} \big[M|f_{\lb}|^\tau\big]^{\theta/\tau}\Big)^{1/\theta} \Big\|_p=\Big\|\Big( \sum \limits_{\lb} \big[M|f_{\lb}|^\tau\big]^{\theta/\tau}\Big)^{\tau/\theta} \Big\|_{p/\tau}^{1/\tau}.
	\end{equation}
We assume that $\min\{\theta/\tau,p/\tau\}>1$.
By using the Hardy-Littlewood maximal inequality we have
\begin{align*}
\|f\|_{S_{p,\theta}^{r}F} &\lesssim \Bigg( \sum \limits_{\lb \in \Z} 2^{-\boldsymbol{\alpha} \cdot|\lb|u} 2^{u |\lb_+|_1/\tau} \Big\|\Big(\sum\limits_{\jb \in \N^d_0} 2^{\theta r |\jb|_1}  \Big|\sum\limits_{\kb \in \Z} \lambda_{2m;\jb+\lb,\kb} \, \chi_{\jb+\lb,\kb}\Big|^{\theta} \Big)^{1/\theta}\Big\|_p^u \Bigg)^{1/u}\\
&=\Bigg( \sum \limits_{\lb \in \Z} 2^{-\boldsymbol{\alpha} \cdot|\lb|u} 2^{u |\lb_+|_1/\tau}2^{-ru |\lb|_1} \Big\|\Big(\sum\limits_{\jb \in \N^d_0} 2^{\theta r (|\jb|_1+|\lb|_1)}  \Big|\sum\limits_{\kb \in \Z} \lambda_{2m;\jb+\lb,\kb} \, \chi_{\jb+\lb,\kb}\Big|^{\theta} \Big)^{1/\theta}\Big\|_p^u\Bigg)^{1/u}\\
& \leq \Big(\sum \limits_{\lb \in \Z} 2^{-\boldsymbol{\alpha} \cdot|\lb|u} 2^{ u|\lb_+|_1/\tau}2^{-ru |\lb|_1}\Big)^{1/u} \|\lambda^{(m)}\|_{s_{p,\theta}^rf}
\end{align*}
for $\tau<\min\{1,p,\theta\}$.
If $\max\{1/\theta-1,1/p-1,0\}\leq 1/\tau-1<r<2m-1$ then the series $\sum_{l \in \zz} 2^{-\alpha |l|u} 2^{ ul_+/\tau}2^{-rlu}$ converges.
\end{proof}	

\textbf{Proof of Theorem \ref{charact-m}.} This theorem can be proved by a similar technique as in the proof of Theorem 4.1 \cite{DU2019}  with the use of Theorem \ref{conv-c} and Propositions \ref{charact2}, \ref{charact1} and \ref{charactf1}.
$\blacksquare$

\section{Best $n$-term approximation with respect to higher order Faber splines}

In this section we consider the quantity of best $n$-term approximation with respect to higher order Faber spline basis. In \cite[Chapter 6]{Glen2018} the author obtain the corresponding order estimates with respect to Faber-Schauder basis. These results hold for restricted smoothness $r$ due to restricted smoothness of Faber-Schauder basis functions.  Our goal was to  extend these results for Besov and Lizorkin-Triebel spaces with higher regularity.

First we give a definition of the quantity of best $n$-term approximation. Let $X$ be a Banach space and $U=\{u_\alpha\}_{\alpha \in \Omega}$ be a system of elements from $X$ such that $\overline{\Span U}=X$. Here $\Omega$ is a countable set of indices, in particular $\Omega=\Z$. The quantity of best $n$-term approximation of an element $f \in X$ with respect to the system $U$ is defined as
 $$
 \sigma_n(f,U,X):= \inf_{\substack{\Lambda(f)\subset \Omega \\ \#\Lambda(f) = n}} \, \inf \limits_{c_\alpha \in \re}  \Big\| f - \sum\limits_{\alpha\in \Lambda(f)} c_\alpha u_\alpha\Big\|_X.
 $$
For some subset $F\subset X$
	\begin{equation}\label{best-n}
\sigma_n(F,U,X):= \sup_{f\in F}  \sigma_n(f,U,X).
\end{equation}

 By $\nabla$ we define a set of indices for the basis  $\boldsymbol{s}_{2m;\jb,\kb}$, i.e
$$
\nabla=\{(\jb,\kb): \jb\in \N_{-1}^d, \kb \in \Z\},
$$
and we denote $\mathcal{B}_{2m}^d=\{\boldsymbol{s}_{2m;\jb,\kb}: \, (\jb,\kb)\in \nabla\}$.  Let us fix a compact set $K\subset \R$. We consider approximation of functions $f$ from Besov $S_{p,\theta}^rB(\R)$ and Lizorkin-Triebel $S_{p,\theta}^rF(\R)$ spaces in the space $L_q(K)$. Further for simplicity instead of the norm $\|\cdot\|_{L_q(K)}$ we will write $\|\cdot\|_{q}$ and instead of $\sigma_n(S_{p,\theta}^rB(\R),\mathcal{B}_{2m}^d, L_q(K))$ we write simply $\sigma_n(S_{p,\theta}^rB,\mathcal{B}_{2m}^d)_q$.

\subsection{Upper order estimates.}  In this subsection we present our main results with respect to upper order estimates of best $n$-term approximation of Besov and Lizorkin-Triebel spaces. Let us first give some notations. Let $\Omega$ be some compact set such that $K\subset \Omega \subset \R$. We define a function $\phi$ by the following three properties: 1) $\phi \in C^\infty$; 2) $\phi(\x)=1$ for $\x \in K$; 3) $\supp \phi=\Omega$.

 To get results we substantially use the estimates for best $n$-term approximation for sequence spaces $s_{p,\theta}^r b(K)$ and $s_{p,\theta}^r f(K)$ that were obtained by Hansen and Sickel  \cite{HS2012}.

\begin{satz} \label{best-m-term-1} Let $1/2m<p<q\leq \infty$, $\theta\leq \min\{q,1\}$ and $1/p<r<\min\{2m,1/\theta-1/\min\{q,1\}\}$ or $1/p<r=1/\theta-1/\min\{q,1\}<2m$. Then 	
$$
\sigma_n(S_{p,\theta}^rB,\mathcal{B}_{2m}^d)_q\lesssim n^{-r}.
$$
\end{satz}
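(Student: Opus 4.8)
We want to show $\sigma_n(S_{p,\theta}^rB, \mathcal{B}_{2m}^d)_q \lesssim n^{-r}$ for the "small smoothness" regime, where $1/2m < p < q \le \infty$, $\theta \le \min\{q,1\}$, and $1/p < r < \min\{2m, 1/\theta - 1/\min\{q,1\}\}$ (or the boundary case where $r = 1/\theta - 1/\min\{q,1\}$).

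**Strategy.** The key tool is the isomorphism from Theorem \ref{charact-m}(i), which says that for the relevant range of parameters, the Faber spline coefficient map $f \mapsto \lambda^{(m)}(f)$ is an isomorphism between $S_{p,\theta}^r B$ and the sequence space $s_{p,\theta}^r b$. So the whole problem should reduce to best $n$-term approximation in sequence spaces. The approach is:

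1. **Reduce to sequences.** Since $\{s_{2m;\jb,\kb}\}$ is a basis and the coefficient functionals are given by the $\lambda_{2m;\jb,\kb}$, approximating $f$ in $L_q(K)$ by an $n$-term combination of basis functions corresponds (up to constants, via the isomorphism) to approximating the coefficient sequence $\lambda^{(m)}(f)$ by an $n$-term sequence in the target sequence norm.

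2. **Match norms.** The $L_q(K)$ norm on the function side needs to be related to a sequence-space norm on the coefficient side. Here I would use the embedding/isomorphism machinery: $L_q(K)$ corresponds to (roughly) $s_{q,q}^0 f$ or the sequence space $s_{0,q}$ type norm. The cutoff function $\phi$ (smooth, $=1$ on $K$, supported on $\Omega$) is introduced precisely to localize and pass between the compact $K$ and $\R$ while keeping the isomorphism.

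3. **Invoke Hansen–Sickel.** The paper explicitly states it "substantially uses" the best $n$-term estimates for sequence spaces $s_{p,\theta}^r b(K)$ from \cite{HS2012}. So the plan is: having reduced to the sequence-space problem, directly apply the Hansen–Sickel results for best $n$-term approximation of the embedding $s_{p,\theta}^r b \hookrightarrow s_{q}$ (or the relevant target), which yield exactly the rate $n^{-r}$ in this parameter regime.

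**The plan in detail.** First I would use Theorem \ref{charact-m}(i) to write $\|f\|_{S_{p,\theta}^r B} \asymp \|\lambda^{(m)}(f)\|_{s_{p,\theta}^r b}$, so the unit ball of $S_{p,\theta}^r B$ maps to (a set comparable to) the unit ball of $s_{p,\theta}^r b$. Next I would express the $L_q(K)$ error of an $n$-term approximant in terms of a sequence norm: since the approximant $\sum_{\Lambda} c_{\jb,\kb} s_{2m;\jb,\kb}$ has its own coefficient sequence, the difference $f - \sum_\Lambda c_{\jb,\kb} s_{2m;\jb,\kb}$ has $L_q$ norm controlled (via the lower smoothness embedding $S_{p,\theta}^r B \hookrightarrow L_q$ for $r>1/p$, and the isomorphism at a suitable smoothness $0$ or adjusted level) by the corresponding sequence-space norm of the coefficient difference. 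This turns the best $n$-term problem for the dictionary $\mathcal{B}_{2m}^d$ in $L_q(K)$ into the best $n$-term problem for the canonical basis of the target sequence space, with source ball $s_{p,\theta}^r b$. Then I would quote \cite{HS2012}, whose theorems give $\sigma_n(s_{p,\theta}^r b, L_q\text{-type sequence space}) \lesssim n^{-r}$ precisely under $\theta \le \min\{q,1\}$ and $1/p < r < 1/\theta - 1/\min\{q,1\}$ (with equality allowed at the endpoint). The boundary case $r = 1/\theta - 1/\min\{q,1\}$ is handled by the same Hansen–Sickel statement, which is stated to include that equality.

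**Main obstacle.** The delicate step is the norm-matching in item 2: the dictionary $\mathcal{B}_{2m}^d$ is a basis for the source space $S_{p,\theta}^r B$, but the error is measured in the different space $L_q(K)$, so one cannot directly invoke a single isomorphism for both. The correct bookkeeping requires identifying $L_q(K)$ with (or embedding it into) an appropriate sequence space in the Faber spline coordinates — effectively characterizing $\|\sum c_{\jb,\kb} s_{2m;\jb,\kb}\|_{L_q(K)}$ from above by a sequence norm — and ensuring the cutoff $\phi$ does not destroy compact support or the coefficient structure. This is where the smoothness restriction $r < 2m$ (so the Faber splines are smooth enough) and $r > 1/p$ (embedding into $C$, hence into $L_q$) both enter, and it is the technically heaviest part; once it is in place, the actual rate is an immediate consequence of the cited sequence-space estimates and no independent greedy construction is needed for the upper bound.
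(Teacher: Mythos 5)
Your overall strategy coincides with the paper's: multiply by the cutoff $\phi$ (using $\|f\phi\|_{S^r_{p,\theta}B}\lesssim \|f\|_{S^r_{p,\theta}B}$ from \cite{NUU2017}), pass from $f\phi$ to its Faber-spline coefficients, and reduce everything to best $n$-term approximation in sequence spaces via Hansen--Sickel \cite{HS2012}. However, there is a genuine gap exactly at the point you yourself label the ``main obstacle'': the bound of the $L_q(K)$-norm of an arbitrary Faber-spline series by a discrete sequence norm is never proved, and this is the actual mathematical content of the paper's proof. The paper carries it out by writing $\boldsymbol{s}_{2m;\jb,\kb}=\sum_{\nb\in\Z}a_{\nb}^{(m,e(\jb))}\boldsymbol{v}_{2m;\jb,\kb+\nb}$, where each $\boldsymbol{v}_{2m;\jb,\lb}$ is supported on the cube $K^{(m)}_{\jb,\lb}$; covering that cube by the finitely many dyadic cells $I_{\jb,\lb+\tb}$, $\tb\in\iota_m^{e(\jb)}$, exploiting their disjointness in $\lb$ and the summability of the exponentially decaying $a_{\nb}$, one gets, level by level,
$$
\Big\|\sum_{\kb\in\Z}c_{\jb,\kb}\,\boldsymbol{s}_{2m;\jb,\kb}\Big\|_{L_q(K)}\lesssim 2^{-|\jb|_1/q}\Big(\sum_{\kb\in\Z}|c_{\jb,\kb}|^q\Big)^{1/q},
$$
and then aggregates the levels with the $u$-triangle inequality, $u=\min\{1,q\}$. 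Without this estimate (or some substitute) the reduction to sequence spaces is only a declaration of intent; note also that no isomorphism can do this job, since the Faber-spline characterization is simply unavailable at smoothness $0$ (it requires $r>1/p$), so an upper bound of this hands-on type is unavoidable.

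Second, and closely related: your identification of the target sequence space is off. You propose ``roughly $s^0_{q,q}f$'', but the space produced by the computation above is $s^0_{q,u}b$ with $u=\min\{1,q\}$ --- the $\ell_u$-aggregation over levels is forced by the $u$-triangle inequality, not a free choice. This matters for the parameters: Corollary 5.11 of \cite{HS2012} gives $\sigma_n(s^r_{p,\theta}b(\Omega),\mathcal{D})_{s^0_{q,u}b}\lesssim n^{-r}$ precisely for $1/p-1/q<r\le 1/\theta-1/u$, which is where the hypothesis $r\le 1/\theta-1/\min\{q,1\}$ of the theorem comes from; with a target of $s^0_{q,q}$-type the quoted result and the stated parameter range would not match. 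A minor simplification compared to your plan: for the upper bound one does not need the full isomorphism of Theorem \ref{charact-m}(i); the one-sided estimate of Proposition \ref{charact2}(i) (coefficient norm bounded by the function norm) is all that is used on the source side.
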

\begin{proof}
Let $f \in S_{p,\theta}^rB(\R)$. Then $f \phi$ is compactly supported and according to 	Theorem \ref{charact-m} can be expanded in the series (\ref{exp-mul}). On the other hand, from Theorem 1.3 of the paper \cite{NUU2017} for $r>\max\{1/p-1,0\}$ we have $\|f\phi\|_{S_{p,\theta}^rB(\R)}\leq C(\phi)\|f\|_{S_{p,\theta}^rB(\R)} $. Therefore, we can write
\begin{align*}
\sigma_n(S_{p,\theta}^rB,\mathcal{B}_{2m}^d)_q&=\sup \limits_{f \in S_{p,\theta}^rB(\R)} \, \inf_{\substack{\Lambda\subset \nabla \\ \#\Lambda = n}} \, \inf \limits_{\mu_{\jb,\kb}}  \Big\| f - \sum\limits_{(\jb,\kb)\in \Lambda} \mu_{\jb,\kb} \boldsymbol{s}_{2m;\jb,\kb} \Big\|_q\\
&\lesssim \sup \limits_{(f\phi) \in S_{p,\theta}^rB(\R)} \, \inf_{\substack{\Lambda\subset \nabla \\ \#\Lambda = n}} \, \inf \limits_{\mu_{\jb,\kb}}  \Big\| f\phi - \sum\limits_{(\jb,\kb)\in \Lambda} \mu_{\jb,\kb} \boldsymbol{s}_{2m;\jb,\kb} \Big\|_q\\
&= \sup \limits_{(f\phi) \in S_{p,\theta}^rB(\R)} \, \inf_{\substack{\Lambda\subset \nabla \\ \#\Lambda = n}} \, \inf_{\substack{\mu_{\jb,\kb} \\ \mu_{\jb,\kb}=0 \, \text{if} \, (\jb,\kb)\not\in \Lambda}} \Big\| \sum \limits_{\jb \in \N_{-1}^d}\sum\limits_{\kb \in \Z} (\lambda_{2m;\jb,\kb}(f\phi) -\mu_{\jb,\kb}) \boldsymbol{s}_{2m;\jb,\kb}\Big\|_q.
\end{align*}
Let $u=\min\{1,q\}$. Then
	\begin{equation}\label{sigma1}
\sigma_n(S_{p,\theta}^rB,\mathcal{B}_{2m}^d)_q^u \leq \sup \limits_{(f\phi) \in S_{p,\theta}^rB(\R)} \, \inf_{\substack{\Lambda\subset \nabla \\ \#\Lambda = n}} \, \inf_{\substack{\mu_{\jb,\kb} \\ \mu_{\jb,\kb}=0 \, \text{if} \, (\jb,\kb)\not\in \Lambda}} \sum \limits_{\jb \in \N_{-1}^d} \Big\| \sum\limits_{\kb \in \Z} (\lambda_{2m;\jb,\kb}(f\phi) -\mu_{\jb,\kb}) \boldsymbol{s}_{2m;\jb,\kb}\Big\|^u_{q}.
\end{equation}
Recall that the basis function $\boldsymbol{s}_{2m;\jb,\kb}$ is defined as
$$
\boldsymbol{s}_{2m;\jb,\kb}(\boldsymbol{x})= \sum \limits_{\nb \in \Z} a_{\boldsymbol{n}}^{(m,e(\jb))} \boldsymbol{v}_{2m;\boldsymbol{j},\boldsymbol{k}+\nb}(\boldsymbol{x}),
$$
where the function $\boldsymbol{v}_{2m;\jb,\lb}$ is supported on the cube $K_{\jb,\lb}^{(m)}$. Then
$$
\boldsymbol{v}_{2m;\jb,\lb}(\x)=\boldsymbol{v}_{2m;\jb,\lb}(\x)\cdot\chi_{K_{\jb,\lb}^{(m)}}(\x) = \boldsymbol{v}_{2m;\jb,\lb}(\x) \cdot\sum\limits_{\tb\in \iota_m^{e(\jb)} } \chi_{I_{\jb,\lb+\tb}}(\x),
$$
where $\iota_m^{e(\jb)}:=\{\tb \in \zz_0^d: \,  t_i=0,...,2m-2 \text{ if } i \in e(\jb) \text{ and }  t_i=0,...,2m-1 \text{ if } i \not\in e(\jb) \}$ and
$I_{\jb,\lb+\tb}=\prod \limits_{i=1}^d I_{j_i,l_i+t_i}$, where
$$
I_{j,l+t}(x)=\begin{cases}
[2^{-j}(l+t),2^{-j}(l+t+1)], & j \in \N_0,\\
[l+t-m,l+t-m+1], & j=-1.
\end{cases}
$$
 Therefore, we have
\begin{align*}
\sum\limits_{\kb \in \Z} (\lambda_{2m;\jb,\kb}(f\phi) -\mu_{\jb,\kb}) &\boldsymbol{s}_{2m;\jb,\kb}(\x)= \sum\limits_{\kb \in \Z} (\lambda_{2m;\jb,\kb}(f\phi) -\mu_{\jb,\kb}) \sum \limits_{\nb \in \Z} a_{\boldsymbol{n}}^{(m,e(\jb))} \boldsymbol{v}_{2m;\boldsymbol{j},\boldsymbol{k}+\nb}(\boldsymbol{x})\\
&= \sum\limits_{\lb \in \Z} \bigg(\sum \limits_{\nb\in \Z} a_{\boldsymbol{n}}^{(m,e(\jb))} (\lambda_{2m;\jb,\lb-\nb}(f\phi) -\mu_{\jb,\lb-\nb}) \bigg) \boldsymbol{v}_{2m;\jb,\lb}(\boldsymbol{x})\\
&= \sum\limits_{\lb \in \Z} \bigg(\sum \limits_{\nb\in \Z} a_{\boldsymbol{n}}^{(m,e(\jb))} (\lambda_{2m;\jb,\lb-\nb}(f\phi) -\mu_{\jb,\lb-\nb}) \bigg) \boldsymbol{v}_{2m;\jb,\lb}(\boldsymbol{x}) \sum\limits_{\tb\in \iota_m} \chi_{I_{\jb,\lb+\tb}}(\x)\\
& \lesssim \sum\limits_{\lb \in \Z} \bigg(\sum \limits_{\nb\in \Z} a_{\boldsymbol{n}}^{(m,e(\jb))} (\lambda_{2m;\jb,\lb-\nb}(f\phi) -\mu_{\jb,\lb-\nb}) \bigg)  \sum\limits_{\tb\in \iota_m} \chi_{I_{\jb,\lb+\tb}}(\x)\\
&=\sum\limits_{\tb\in \iota_m} \sum\limits_{\lb \in \Z} \bigg(\sum \limits_{\nb\in \Z} a_{\boldsymbol{n}}^{(m,e(\jb))} (\lambda_{2m;\jb,\lb-\nb}(f\phi) -\mu_{\jb,\lb-\nb}) \bigg)   \chi_{I_{\jb,\lb+\tb}}(\x).
\end{align*}
From this we obtain for $q<\infty$. For $q=\infty$ modification is trivial
\begin{align*}
\bigg|\sum\limits_{\kb \in \Z} (\lambda_{2m;\jb,\kb}(f\phi) &-\mu_{\jb,\kb}) \boldsymbol{s}_{2m;\jb,\kb}(\x) \bigg|^q \\ &\lesssim \sum\limits_{\tb\in \iota_m} \bigg|\sum\limits_{\lb \in \Z} \bigg(\sum \limits_{\nb\in \Z} a_{\boldsymbol{n}}^{(m,e(\jb))} (\lambda_{2m;\jb,\lb-\nb}(f\phi) -\mu_{\jb,\lb-\nb}) \bigg)   \chi_{I_{\jb,\lb+\tb}}(\x) \bigg|^q\\
& \leq \sum\limits_{\tb\in \iota_m}\sum\limits_{\lb \in \Z}  \bigg| \sum \limits_{\nb\in \Z} a_{\boldsymbol{n}}^{(m,e(\jb))} (\lambda_{2m;\jb,\lb-\nb}(f\phi) -\mu_{\jb,\lb-\nb}) \bigg|^q   \chi_{I_{\jb,\lb+\tb}}(\x),
\end{align*}
which yields
 \begin{align*}
 \int\limits_{K} \bigg|\sum\limits_{\kb \in \Z} (\lambda_{2m;\jb,\kb}(f\phi) &-\mu_{\jb,\kb}) \boldsymbol{s}_{2m;\jb,\kb}(\x) \bigg|^q \, d \x \\ & \lesssim \sum\limits_{\tb\in \iota_m}\sum\limits_{\lb \in \Z}  \bigg| \sum \limits_{\nb\in \Z} a_{\boldsymbol{n}}^{(m,e(\jb))} (\lambda_{2m;\jb,\lb-\nb}(f\phi) -\mu_{\jb,\lb-\nb}) \bigg|^q   \int\limits_{K} \chi_{I_{\jb,\lb+\tb}}(\x) \,  d \x\\
 & \lesssim 2^{-|\jb|_1} \sum\limits_{\tb\in \iota_m}\sum\limits_{\lb \in \Z}  \bigg| \sum \limits_{\nb\in \Z} a_{\boldsymbol{n}}^{(m,e(\jb))} (\lambda_{2m;\jb,\lb-\nb}(f\phi) -\mu_{\jb,\lb-\nb}) \bigg|^q \\
& \lesssim 2^{-|\jb|_1}  \sum\limits_{\kb \in \Z} \bigg| \sum \limits_{\nb\in \Z} a_{\boldsymbol{n}}^{(m,e(\jb))} (\lambda_{2m;\jb,\kb}(f\phi) -\mu_{\jb,\kb}) \bigg|^q\\
& \lesssim 2^{-|\jb|_1}   \sum\limits_{\kb \in \Z} |\lambda_{2m;\jb,\kb}(f\phi) -\mu_{\jb,\kb}|^q.
 \end{align*}
Now we can proceed estimation of (\ref{sigma1})
$$
\sigma_n(S_{p,\theta}^rB,\mathcal{B}_{2m}^d)_q^u \leq \sup \limits_{(f\phi) \in S_{p,\theta}^rB(\R)} \, \inf_{\substack{\Lambda\subset \nabla \\ \#\Lambda = n}} \, \inf_{\substack{\mu_{\jb,\kb} \\ \mu_{\jb,\kb}=0 \, \text{if} \, (\jb,\kb)\not\in \Lambda}} \sum \limits_{\jb \in \N_{-1}^d}  2^{-|\jb|_1u/q}   \bigg(\sum\limits_{\kb \in \Z} |\lambda_{2m;\jb,\kb}(f\phi) -\mu_{\jb,\kb}|^q\bigg)^{u/q}.
$$
Since now $\supp \, (f\phi) =\Omega$ then by $\nabla_{\jb}(\Omega)$ we denote $\nabla_{\jb}(\Omega):=\{\kb \in\Z: \lambda_{2m;\jb,\kb}(f\phi)\neq 0\}$. The set $\nabla_{\jb}(\Omega)$ is finite for each $\jb$ since $\lambda_{2m;\jb,\kb}(f\phi)$ are defined via function values at dyadic points (see (\ref{exp-mul})).
By $\mathcal{D}$ we denote the canonical basis of unit vectors $\{e_{\jb,\kb}\}$, where $\jb \in \N_{-1}^d$ and $\kb \in \nabla_{\jb}(\Omega)$.

Proposition \ref{charact2} for $1/p<r<2m$ implies
\begin{align*}
\sigma_n(S_{p,\theta}^rB,\mathcal{B}_{2m}^d)_q^u & \leq \sup \limits_{a \in s_{p,\theta}^rb(\Omega)} \, \inf_{\substack{\Lambda\subset \nabla \\ \#\Lambda = n}} \, \inf_{\substack{\mu_{\jb,\kb} \\ \mu_{\jb,\kb}=0 \, \text{if} \, (\jb,\kb)\not\in \Lambda}} \sum \limits_{\jb \in \N_{-1}^d}  2^{-|\jb|_1u/q}   \bigg(\sum\limits_{\kb \in \Z} |a_{\jb,\kb} -\mu_{\jb,\kb}|^q\bigg)^{u/q}\\
&= \sigma_n (s_{p,\theta}^rb(\Omega),\mathcal{D})^u_{s_{q,u}^0 b}.
\end{align*}
From Corollary 5.11 \cite{HS2012} we obtain for $1/p-1/q<r\leq 1/\theta-1/u$
$$
\sigma_n (s_{p,\theta}^rb(\Omega),\mathcal{D})_{s_{q,u}^0 b} \lesssim n^{-r}.
$$
Therefore, for $1/p<r<\min\{2m,1/\theta-1/\min\{q,1\}\}$ or $1/p<r=1/\theta-1/\min\{q,1\}<2m$
$$
\sigma_n(S_{p,\theta}^rB(\R),\mathcal{B}_{2m}^d)_{q} \lesssim n^{-r}.
$$
\end{proof}

\begin{rem}\label{rem-best-1}
	The corresponding result for best $n$-term approximation with respect to the Faber-Schauder basis was obtained for the following range of smoothness parameter $1/p<r<\min\{1/\theta-1/\min\{q,1\},2\}$ or $1/p<r=1/\theta-1/\min\{q,1\}<2$ (see \cite[Theorem 6.21]{Glen2018} for details).
\end{rem}

\begin{satz} \label{best-m-term-2} Let $1/2m<p,q\leq \infty$, $0<\theta\leq \infty$ .
	\begin{itemize}
		\item [(i)]  For $\max\{1/p,1/\theta-1/\max\{q,1\}\}<r<2m$
		$$
	\sigma_n(S_{p,\theta}^rB,\mathcal{B}_{2m}^d)_q\lesssim n^{-r} (\log n)^{(d-1)(r-1/\theta+1)}
		$$
		holds.
		\item [(ii)]  For $\max\{1/p,1/\theta\}<r<2m$	
         $$
\sigma_n(S_{p,\theta}^rF,\mathcal{B}_{2m}^d)_q\lesssim n^{-r} (\log n)^{(d-1)(r-1/\theta+1)}
		$$
		holds.
	\end{itemize}
\end{satz}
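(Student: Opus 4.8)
The plan is to mirror the proof of Theorem \ref{best-m-term-1} and reduce both assertions to best $n$-term approximation estimates in the associated sequence spaces, the only new ingredient being that I would invoke the \emph{large-smoothness} best $n$-term estimates of Hansen and Sickel \cite{HS2012} in place of the small-smoothness Corollary 5.11 used there. The entire function-to-sequence reduction carries over verbatim, because it rests solely on the spatial localization of the tensorized Faber splines and not on the admissible smoothness range.

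For part (i) I would argue as in Theorem \ref{best-m-term-1}. Given $f\in S_{p,\theta}^rB(\R)$, the cut-off $f\phi$ is compactly supported, obeys $\|f\phi\|_{S_{p,\theta}^rB}\lesssim\|f\|_{S_{p,\theta}^rB}$ by \cite{NUU2017}, and expands in the series (\ref{exp-mul}) by Theorem \ref{charact-m}. The level-wise bound
$$
\int_{K}\Big|\sum_{\kb\in\Z}(\lambda_{2m;\jb,\kb}(f\phi)-\mu_{\jb,\kb})\boldsymbol{s}_{2m;\jb,\kb}(\x)\Big|^q\,d\x\lesssim 2^{-|\jb|_1}\sum_{\kb\in\Z}|\lambda_{2m;\jb,\kb}(f\phi)-\mu_{\jb,\kb}|^q
$$
is independent of the smoothness, so with $u=\min\{1,q\}$ and Proposition \ref{charact2}(i) (applicable since $1/p<r<2m$) the same splitting yields
$$
\sigma_n(S_{p,\theta}^rB,\mathcal{B}_{2m}^d)_q^u\leq\sigma_n\big(s_{p,\theta}^rb(\Omega),\mathcal{D}\big)^u_{s_{q,u}^0 b}.
$$
It then remains to insert the large-smoothness estimate from \cite{HS2012}, which in the range $\max\{1/p,1/\theta-1/\max\{q,1\}\}<r<2m$ gives $\sigma_n(s_{p,\theta}^rb(\Omega),\mathcal{D})_{s_{q,u}^0 b}\lesssim n^{-r}(\log n)^{(d-1)(r-1/\theta+1)}$, and the claim follows.

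For part (ii) the structure is identical, the differences being that $f\in S_{p,\theta}^rF(\R)$, that the corresponding pointwise-multiplier estimate $\|f\phi\|_{S_{p,\theta}^rF}\lesssim\|f\|_{S_{p,\theta}^rF}$ is used, and that Proposition \ref{charact2}(ii) (valid precisely for $\max\{1/p,1/\theta\}<r<2m$) controls the $f$-type sequence norm of $\lambda^{(m)}(f\phi)$. Since the reduction estimate above does not see the source space, the same argument produces
$$
\sigma_n(S_{p,\theta}^rF,\mathcal{B}_{2m}^d)_q^u\leq\sigma_n\big(s_{p,\theta}^rf(\Omega),\mathcal{D}\big)^u_{s_{q,u}^0 b},
$$
after which I would finish by applying the $f$-to-$b$ large-smoothness best $n$-term estimate of \cite{HS2012}, yielding the same exponent $(d-1)(r-1/\theta+1)$.

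The routine part is thus the reduction, copied from Theorem \ref{best-m-term-1} together with Proposition \ref{charact2}. The main obstacle is locating the correct large-smoothness corollaries in \cite{HS2012} and matching their hypotheses to the stated ranges: in particular checking that the target space is $s_{q,u}^0 b$ with $u=\min\{1,q\}$, verifying the role of $\max\{q,1\}$ in the lower threshold for $r$ in (i) (which differs from the small-smoothness threshold $1/\theta-1/\min\{q,1\}$ of Theorem \ref{best-m-term-1}), and confirming that the logarithmic exponent is exactly $(d-1)(r-1/\theta+1)$ rather than the wavelet value $(d-1)(r-1/\theta+1/2)$. The endpoint $q=\infty$ should be handled separately, where the $L_q$ splitting degenerates and only trivial modifications are required.
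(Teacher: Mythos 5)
Your reduction to sequence spaces is correct as an inequality, and for $1\le q\le\infty$ your route for part (i) does work: there $u=\min\{1,q\}=1$, the target is $s^0_{q,1}b$, and the quantity $\sigma_n(s^r_{p,\theta}b(\Omega),\mathcal{D})_{s^0_{q,1}b}$ is exactly what Corollary 5.8 of \cite{HS2012} (recorded as \eqref{sic-lem} with $y=b$) controls; this is also how the paper itself treats the endpoint $q=\infty$. The genuine gap is the range $1/(2m)<q<1$, which the theorem allows. There your target becomes $s^0_{q,q}b$, whose norm dominates the $s^0_{q,1}b$-norm (for $q<1$ the outer $\ell_q$-sum dominates the $\ell_1$-sum), and the estimate you propose to ``insert'', namely $\sigma_n(s^r_{p,\theta}b(\Omega),\mathcal{D})_{s^0_{q,q}b}\lesssim n^{-r}(\log n)^{(d-1)(r-1/\theta+1)}$, is not what \cite{HS2012} provides. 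In these sequence-space widths the logarithmic exponent is sensitive to the outer index $v$ of the target $s^0_{q,v}$: it has the form $(d-1)(r-1/\theta+1/v)$ (compare the remark in the introduction that wavelet-type targets, with $v=2$, give the better power $r-1/\theta+1/2$). With $v=q<1$ one expects the strictly worse power $(d-1)(r-1/\theta+1/q)$, so your final step fails precisely in the regime where the large-smoothness theorem must improve on Theorem \ref{best-m-term-1}. The paper avoids this by a different reduction, which is the key ingredient your proposal misses: exploiting that the $\boldsymbol{s}_{2m;\jb,\kb}$ are smooth and well localized, Lemma 4.3 of \cite{TV2019} gives, for $1/(2m)<\tau<\min\{q,1\}$, the pointwise bound
$$
\sum_{\kb\in\Z}|c_{\jb,\kb}|\,|\boldsymbol{s}_{2m;\jb,\kb}(\x)|\lesssim\Big(M\Big(\sum_{\kb\in\Z}c_{\jb,\kb}\chi_{I_{\jb,\kb}}\Big)^{\tau}\Big)^{1/\tau}(\x),
$$
and then \eqref{m-ineq} together with the Fefferman--Stein maximal inequality (Lemma \ref{max-ineq}) yield \eqref{add-est}: the $L_q$-norm of any coefficient series in the Faber splines is controlled by the $s^0_{q,1}f$-norm of its coefficients, for all $1/(2m)<q<\infty$. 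This upgrades the target to inner index $1$ uniformly in $q$, after which \eqref{sic-lem} applies and gives the claimed logarithm.

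Part (ii) inherits the same defect for $q<1$ and has an additional citation gap: you reduce to the mixed width $\sigma_n(s^r_{p,\theta}f(\Omega),\mathcal{D})_{s^0_{q,u}b}$ and appeal to an ``$f$-to-$b$'' large-smoothness estimate, but no such result is quoted in the paper; \eqref{sic-lem} has a $b$-type source, and the paper's own proof of (ii) instead keeps the $f$-type target produced by \eqref{add-est}, combining Proposition \ref{charact2}(ii) (the $f$-type source bound, valid for $\max\{1/p,1/\theta\}<r<2m$) with the corresponding $f$-source estimate of \cite{HS2012}. So to be salvageable for all admissible $q$, your plan needs the maximal-function reduction \eqref{add-est} (or a substitute for it) in both parts.
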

\begin{proof}
We prove part (i) first. We start as in the proof of Theorem \ref{best-m-term-1}
\begin{align*}
\sigma_n(S_{p,\theta}^rB,\mathcal{B}_{2m}^d)_q&=\sup \limits_{f \in S_{p,\theta}^rB(\R)} \, \inf_{\substack{\Lambda\subset \nabla \\ \#\Lambda = n}} \, \inf \limits_{\mu_{\jb,\kb}} \Big\| f - \sum\limits_{(\jb,\kb)\in \Lambda} \mu_{\jb,\kb} \boldsymbol{s}_{2m;\jb,\kb} \Big\|_{q}\\
&\lesssim \sup \limits_{(f\phi) \in S_{p,\theta}^rB(\R)} \, \inf_{\substack{\Lambda\subset \nabla \\ \#\Lambda = n}} \, \inf_{\substack{\mu_{\jb,\kb} \\ \mu_{\jb,\kb}=0 \, \text{if} \, (\jb,\kb)\not\in \Lambda}} \Big\| \sum \limits_{\jb \in \N_{-1}^d}\sum\limits_{\kb \in \Z} (\lambda_{2m;\jb,\kb}(f\phi) -\mu_{\jb,\kb}) \boldsymbol{s}_{2m;\jb,\kb}\Big\|_{q}.
\end{align*}
Let us estimate the following norm
$$
\Big\| \sum \limits_{\jb \in \N_{-1}^d}\sum\limits_{\kb \in \Z} c_{\jb,\kb} \boldsymbol{s}_{2m;\jb,\kb}\Big\|_{q}.
$$

Since basis functions $\boldsymbol{s}_{2m;\jb,\kb}$ have smoothness $2m$, by using Lemma 4.3 from \cite{TV2019} we get for fixed $\jb$, $\x \in \R$
and $1/(2m)<\tau\leq 1$
$$
\sum \limits_{\kb \in \Z} |c_{\jb,\kb}|\, |\boldsymbol{s}_{2m;\jb,\kb}(\x)| \lesssim \Big(M\big(\sum\limits_{\kb \in \Z} c_{\jb,\kb} \chi_{I_{\jb,\kb}} \big)^{\tau}\Big)^{1/\tau}(\x),
$$
where $M$ is the Hardy-Littlewood maximal operator.
Using (\ref{m-ineq}) with $\theta=1$ and Lemma \ref{max-ineq} we obtain for $1/(2m)<\tau< \min\{q,1\}$
	\begin{equation}\label{add-est}
\Big\| \sum \limits_{\jb \in \N_{-1}^d}\sum\limits_{\kb \in \Z} c_{\jb,\kb} \boldsymbol{s}_{2m;\jb,\kb}\Big\|_{q} \lesssim \Big\| \sum\limits_{\jb \in \N_{-1}^d}\Big|\sum\limits_{\kb \in \Z} c_{\jb,\kb} \chi_{I_{\jb,\kb}}(\x) \Big| \, \Big\|_{q}.
	\end{equation}
In the last inequality $0<q<\infty$.

 Using (\ref{add-est}) and Proposition \ref{charact2} (i) for $1/p<r<2m$ we can write
 \begin{align*}
\sigma_n(S_{p,\theta}^rB,\mathcal{B}_{2m}^d)_q
 & \lesssim \sup \limits_{(f\phi) \in S_{p,\theta}^rB(\R)} \, \inf_{\substack{\Lambda\subset \nabla \\ \#\Lambda = n}} \, \inf_{\substack{\mu_{\jb,\kb} \\ \mu_{\jb,\kb}=0 \, \text{if} \, (\jb,\kb)\not\in \Lambda}} \Big\| \sum\limits_{\jb \in \N_{-1}^d}\Big|\sum\limits_{\kb \in \Z} (\lambda_{2m;\jb,\kb}(f\phi) -\mu_{\jb,\kb}) \chi_{I_{\jb,\kb}}(\x) \Big| \, \Big\|_{q}\\
 &\lesssim \sup \limits_{a \in s_{p,\theta}^rb(\Omega)} \, \inf_{\substack{\Lambda\subset \nabla \\ \#\Lambda = n}} \, \inf_{\substack{\mu_{\jb,\kb} \\ \mu_{\jb,\kb}=0 \, \text{if} \, (\jb,\kb)\not\in \Lambda}}\Big\| \sum\limits_{\jb \in \N_{-1}^d}\Big|\sum\limits_{\kb \in \Z} (a_{\jb,\kb} -\mu_{\jb,\kb}) \chi_{I_{\jb,\kb}}(\x) \Big| \, \Big\|_{q}\\
 &=\sigma_n(s_{p,\theta}^r b(\Omega),\mathcal{D})_{s_{q,1}^0 f}.
 \end{align*}

 In case when $q=\infty$ we consider similar technique as in  Theorem \ref{best-m-term-1}. Taking into account that $u=\min\{q,1\}$ we will get in the end the quantity $\sigma_n(s_{p,\theta}^r b(\Omega),\mathcal{D})_{s_{\infty,1}^0 b}$.

 Finally by using Corollary 5.8 from \cite{HS2012} we obtain that for $r>\max\left\{0, 1/\min\{p,\theta\} -1/\max\{1,q\} \right\}$
	\begin{equation}\label{sic-lem}
\sigma_n(s_{p,\theta}^r b(\Omega),\mathcal{D})_{s_{q,1}^0 y}\asymp n^{-r} (\log n )^{(d-1)(r-1/\theta+1)},
	\end{equation}
	where $y=\{b,f\}$.
That implies for $\max\{1/p,1/\theta-1/\max\{q,1\}\}<r<2m$
$$
\sigma_n(S_{p,\theta}^rB,\mathcal{B}_{2m}^d)_q \lesssim n^{-r} (\log n )^{(d-1)(r-1/\theta+1)}.
$$

To prove part (ii) we use the same technique with Proposition \ref{charact2} (ii) instead of Proposition \ref{charact2} (i).
\end{proof}

\begin{rem}\label{rem-best-2}
	We refer again to \cite{Glen2018} (see Theorem 6.23), where the corresponding estimates with respect to Faber-Schauder basis can be found for $\max\{1/p,1/\theta-1/\max\{q,1\}\}<r<2$ and $\max\{1/p,1/\theta\}<r<2$ for Besov and Triebel-Lizorkin spaces respectively.
\end{rem}

\begin{rem}\label{rem-best-4.5}
We compare results for Besov spaces from Theorems \ref{best-m-term-1} and \ref{best-m-term-2} with results that where obtained for approximation by linear sampling methods in the paper \cite{DD2011}. For the case $p<q$ the rate of decay of the ``main term'' for this quantity $r_n$ is $n^{-r+1/p-1/q}$, what is a worse error decay than for the nonlinear method we consider here, where it is $n^{-r}$. See also Paragraph 4.3.
\end{rem}

\subsection{Discussion and special cases.}
In this subsection we consider the Sobolev spaces of mixed smoothness $S_{p}^{r}W$. Particular interest has the next theorem, where  we obtain the upper estimate for best $n$-term approximation of spaces $S_{p}^{r}W$ for the limiting smoothness $r=2m$. Note that in the proof we use ideas that were offered in \cite{Glen2018} for the similar problem with respect to Faber-Schauder basis for function spaces $S_{p}^{2}W$.
\begin{satz} \label{best-m-term-3} Let $1<p<\infty$ and $0<q\leq \infty$. Then 	
	\begin{equation}\label{m-term-3}
	\sigma_n(S_{p}^{2m}W,\mathcal{B}_{2m}^d)_q\lesssim n^{-2m} (\log^{d-1}n)^{2m+1}.
	\end{equation}
\end{satz}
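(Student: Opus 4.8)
The plan is to run the scheme of Theorem \ref{best-m-term-2}, the only genuinely new feature being that $r=2m$ is precisely the saturation smoothness of the basis, so the coefficient bound of Proposition \ref{charact2} is no longer at our disposal and must be replaced. Recalling that $S_p^{2m}W=S^{2m}_{p,2}F$ for $1<p<\infty$, I would first multiply by the cut-off $\phi$ to reduce to a compactly supported function, using $\|f\phi\|_{S_p^{2m}W}\lesssim\|f\|_{S_p^{2m}W}$ (a pointwise multiplier estimate in the spirit of \cite{NUU2017}, applicable since $1<p<\infty$), and expand $f\phi$ via (\ref{exp-mul}). Since the splines $\boldsymbol{s}_{2m;\jb,\kb}$ have smoothness $2m$, the maximal-function estimate (Lemma 4.3 of \cite{TV2019}) together with (\ref{m-ineq}), exactly as in (\ref{add-est}), reduces the $L_q(K)$-error of any finite coefficient selection to the characteristic-function sum, yielding
$$
\sigma_n(S_p^{2m}W,\mathcal B_{2m}^d)_q\lesssim\sup_{(f\phi)\in S_p^{2m}W(\R)}\ \inf_{\substack{\Lambda\subset\nabla\\ \#\Lambda=n}}\ \inf_{\substack{\mu_{\jb,\kb}\\ \mu_{\jb,\kb}=0,\,(\jb,\kb)\notin\Lambda}}\Big\|\sum_{\jb\in\N_{-1}^d}\Big|\sum_{\kb\in\Z}(\lambda_{2m;\jb,\kb}(f\phi)-\mu_{\jb,\kb})\chi_{I_{\jb,\kb}}\Big|\Big\|_q,
$$
which is a best $n$-term problem for the sequence $\lambda^{(m)}(f\phi)$ measured in the target space $s^0_{q,1}f$.

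The crucial step is to control $\lambda^{(m)}(f\phi)$ at the endpoint $r=2m$. The geometric series produced by the proof of Proposition \ref{charact2} diverges exactly when $r=2m$ (the weight $2^{(2m-r)l_i}$ for $l_i<0$ no longer decays), so I would instead open up the differences. Writing each one-dimensional operator $\Delta^{2m}_{2^{-j-1}}$ as an iterated integral of the $2m$-th derivative, the mixed difference obeys, for $\jb\in\N_{-1}^d$ with $e=e(\jb)$ and $D^{2m\boldsymbol{1}_e}$ the mixed derivative of order $2m$ in the variables indexed by $e$,
$$
\big|\Delta^{2m,e}_{2^{-\jb-1}}f(\x_{\jb;\kb,\lb})\big|\lesssim\prod_{i\in e}2^{-(2m-1)j_i}\int_{B_{\jb,\kb}}\big|D^{2m\boldsymbol{1}_e}f(\boldsymbol{y})\big|\,d\boldsymbol{y},
$$
where $B_{\jb,\kb}$ has sidelengths $\asymp 2^{-j_i}$ for $i\in e$ and $O(1)$ in the remaining directions, with $|B_{\jb,\kb}|\asymp 2^{-|\jb|_1}$. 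Applying Hölder in the directions of $e$ and using that the boxes $B_{\jb,\kb}$ overlap only boundedly in $\kb$, a short computation gives the endpoint bound
$$
\|\lambda^{(m)}(f\phi)\|_{s^{2m}_{p,\infty}b}\lesssim\sup_{e\subset[d]}\|D^{2m\boldsymbol{1}_e}(f\phi)\|_p\lesssim\|f\phi\|_{S_p^{2m}W}\lesssim\|f\|_{S_p^{2m}W},
$$
the point being that the saturation forces the crude $\theta=\infty$ norm, for which the supremum over levels survives precisely because no summation in $\jb$ is attempted.

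Combining the two displays reduces the problem to the sequence-space quantity $\sigma_n(s^{2m}_{p,\infty}b(\Omega),\mathcal D)_{s^0_{q,1}f}$, to which I would apply the estimate (\ref{sic-lem}) (Corollary 5.8 of \cite{HS2012}) with $r=2m$ and $\theta=\infty$; its hypothesis $r>\max\{0,1/p-1/\max\{1,q\}\}$ holds since $p>1$. With $1/\theta=0$ the logarithmic exponent becomes $(d-1)(2m+1)$, which gives exactly $n^{-2m}(\log n)^{(d-1)(2m+1)}=n^{-2m}(\log^{d-1}n)^{2m+1}$, proving (\ref{m-term-3}). The main obstacle is precisely the endpoint coefficient estimate: one must replace the divergent level-summation of Proposition \ref{charact2} by the derivative representation above and accept the loss incurred by passing to $\theta=\infty$, while treating with care the mixed levels where some $j_i=-1$ (there no difference is taken, but the associated differentiation order is $0$ and the bounded factor $2^{2m j_i}$ is harmless).
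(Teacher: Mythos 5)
Your overall architecture coincides with the paper's: multiply by the cut-off $\phi$, use (\ref{add-est}) to pass from $L_q(K)$ to a best $n$-term problem in $s^0_{q,1}f$ (resp.\ $s^0_{\infty,1}b$ when $q=\infty$), prove the endpoint coefficient bound $\|\lambda^{(m)}(f\phi)\|_{s^{2m}_{p,\infty}b}\lesssim\|f\phi\|_{S^{2m}_pW}$ --- this is exactly the paper's inequality (\ref{w-b}) --- and then invoke (\ref{sic-lem}) with $\theta=\infty$ to obtain the exponent $(d-1)(2m+1)$. For the levels $\jb\in\N_0^d$ your argument is also the paper's: the identity $\Delta^{2m}_h g(x)=h^{2m-1}\int g^{(2m)}(t)N_{2m}(h^{-1}(t-x))\,dt$ plus H\"older and the bounded overlap of the boxes, cf.\ (\ref{delta-in}).

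However, there is a genuine gap at the levels with some $j_i=-1$, and it is precisely the point where the paper has to work hardest. Your displayed inequality
$$
\big|\Delta^{2m,e}_{2^{-\jb-1}}f(\x_{\jb;\kb,\lb})\big|\lesssim\prod_{i\in e}2^{-(2m-1)j_i}\int_{B_{\jb,\kb}}\big|D^{2m\boldsymbol{1}_e}f(\boldsymbol{y})\big|\,d\boldsymbol{y}
$$
is false whenever $e=e(\jb)\neq[d]$: the left-hand side depends on the restriction of $f$ to the slice $\{x_i=k_i:\ i\notin e\}$, while the right-hand side is an integral over a full-dimensional box, so no uniform constant can exist (take $d=2$, $e=\{1\}$, $f=g\otimes h$ with $h$ a narrow bump satisfying $h(k_2)=1$ and $\int|h|$ arbitrarily small). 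Consequently your remark that in the $j_i=-1$ directions ``the differentiation order is $0$ and the factor $2^{2mj_i}$ is harmless'' misses the actual difficulty: in those directions the coefficient samples $f$ on the integer grid, and an $\ell_p$ sum of point values cannot be controlled by $L_p$ norms of mixed derivatives alone; a trace (sampling) inequality is indispensable. This is what the paper supplies via Lemma \ref{sic-ineq}, $\big(\sum_{k\in\zz}|f(k)|^p\big)^{1/p}\lesssim\|f\|_{B^{1/p}_{p,\widetilde{p}}}$, combined with the embeddings $W^{2m}_p\subset B^{2m}_{p,\max\{p,2\}}\subset B^{1/p}_{p,1}$, applied iteratively in each direction with $j_i=-1$. (Alternatively you could repair your route by spending one derivative in each such direction through the localized Sobolev embedding $|g(k)|^p\lesssim\int_{k-1/2}^{k+1/2}(|g|^p+|g'|^p)\,dy$; the resulting mixed derivatives are still dominated by $\|f\phi\|_{S^{2m}_pW}$.) Note that your final estimate via $\sup_{e\subset[d]}\|D^{2m\boldsymbol{1}_e}(f\phi)\|_p$ is indeed an equivalent norm on $S^{2m}_pW$ for $1<p<\infty$, so it is only the derivation, not the claimed inequality, that needs repair --- but as written the endpoint coefficient bound, which is the heart of the proof, is not established.
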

\begin{proof}
	Since the compactly supported $f\phi \in S_{p}^{2m}W(\Omega)$ can be represented by the series (\ref{exp-mul}), we have
	\begin{align*}
	\sigma_n(S_{p}^{2m}W,\mathcal{B}_{2m}^d)_q&=\sup \limits_{f \in S_{p}^rW(\R)} \, \inf_{\substack{\Lambda\subset \nabla \\ \#\Lambda = n}} \, \inf \limits_{\mu_{\jb,\kb}} \Big\| f - \sum\limits_{(\jb,\kb)\in \Lambda} \mu_{\jb,\kb} \boldsymbol{s}_{2m;\jb,\kb} \Big\|_{q}\\
	&= \sup \limits_{ f\phi \in S_{p}^{2m}W(\R)} \, \inf_{\substack{\Lambda\subset \nabla \\ \#\Lambda = n}} \, \inf_{\substack{\mu_{\jb,\kb} \\ \mu_{\jb,\kb}=0 \, \text{if} \, (\jb,\kb)\not\in \Lambda}} \Big\| \sum \limits_{\jb \in \N_{-1}^d}\sum\limits_{\kb \in \Z} (\lambda_{2m;\jb,\kb}(f\phi) -\mu_{\jb,\kb}) \boldsymbol{s}_{2m;\jb,\kb}\Big\|_{q}.
	\end{align*}	
	By using again (\ref{add-est}) we can write
	$$
	\sigma_n(S_{p}^{2m}W,\mathcal{B}_{2m}^d)_q \lesssim \sup \limits_{f\phi \in S_{p}^rW(\R)} \, \inf_{\substack{\Lambda\subset \nabla \\ \#\Lambda = n}} \, \inf_{\substack{\mu_{\jb,\kb} \\ \mu_{\jb,\kb}=0 \, \text{if} \, (\jb,\kb)\not\in \Lambda}}   \Big\| \sum\limits_{\jb \in \N_{-1}^d}\Big|\sum\limits_{\kb \in \Z} (a_{\jb,\kb} -\mu_{\jb,\kb}) \chi_{I_{\jb,\kb}}(\x) \Big| \, \Big\|_{q}.
	$$
	
	Further we prove the following inequality for $f \in S_p^{2m} W(\R)$ and $1<p<\infty$
	\begin{equation} \label{w-b}
	\sup\limits_{\jb \in \N_{-1}^d} 2^{2m|\jb|_1} \Big\|\sum\limits_{\kb \in \Z} \lambda_{2m;\jb,\kb}(f) \chi_{I_{\jb,\kb}}  \Big \|_p \lesssim \|f\|_{S_{p}^{2m}W}.
	\end{equation}
	In \cite[P. 56]{Glen2018} it is shown that it is enough to prove this inequality for functions $\varphi \in D(\R)$ since this space is dense in $S_p^{2m}W$.
	For the simplicity we consider the case $d=2$. The supremum $\sup\limits_{\jb \in \N_{-1}^2}$ can be decomposed
	$$
	\sup\limits_{\jb \in \N_{-1}^2}\leq \sup_{\substack{j_i \in \N_0 \\ i=1,2}} +\sup_{\substack{j_1=-1 \\ j_2 \in \N_0}}+\sup_{\substack{j_1 \in \N_0 \\ j_2=-1}}+\sup_{\substack{j_1=-1 \\ j_2=-1}}.
	$$
	
	We will further use the following lemma.
	\begin{lem}\label{sic-ineq} \cite[Proposition 2]{SC2000}
		Let $0<p<\infty$ and $\widetilde{p}=\min\{1,p\}$. Then there exists a constant $C$ such that
		$$
		\Big(\sum_{k \in \zz} |f(k)|^p \Big)^{1/p}\leq C \|f\|_{B_{p,\widetilde{p}}^{1/p}},
		$$
		holds for all $f \in B_{p,\widetilde{p}}^{1/p}$.
	\end{lem}
	
	We have
	\begin{align*}
	\sup_{\substack{j_1=-1\\ j_2=-1}} 2^{2m|\jb|_1} \Big\|\sum\limits_{\kb \in \zz^2} \lambda_{2m;\jb,\kb}(\varphi) \chi_{I_{\jb,\kb}}  \Big \|_p & = C(m) \Big\|\sum\limits_{k_1 \in \zz} \sum\limits_{k_2 \in \zz} \varphi(k_1,k_2) \chi_{-1,k_1} \chi_{-1,k_2}  \Big\|_p\\
	& \lesssim \Big( \sum\limits_{k_1 \in \zz} \sum\limits_{k_2 \in \zz} |\varphi(k_1,k_2)|^p \Big)^{1/p}.
	\end{align*}
	According to Lemma \ref{sic-ineq} we get
	$$
	\sup_{\substack{j_1=-1\\ j_2=-1}} 2^{2m|\jb|_1} \Big\|\sum\limits_{\kb \in \zz^2} \lambda_{2m;\jb,\kb}(\varphi) \chi_{I_{\jb,\kb}}  \Big \|_p \lesssim  \Big( \sum\limits_{k_1 \in \zz} \|\varphi(k_1,\cdot)\|^p_{B_{p,1}^{1/p}} \Big)^{1/p}.
	$$
	Using the embedding for $p>1$
	$$
	W_p^{2m}\subset B^{2m}_{p,\max\{p,2\}} \subset B_{p,1}^{1/p},
	$$
	we proceed
	$$
	\sup_{\substack{j_1=-1\\ j_2=-1}} 2^{2m|\jb|_1} \Big\|\sum\limits_{\kb \in \zz^2} \lambda_{2m;\jb,\kb}(\varphi) \chi_{I_{\jb,\kb}}  \Big \|_p \lesssim  \Big( \sum\limits_{k_1 \in \zz} \|\varphi(k_1,\cdot)\|^p_{W_p^{2m}} \Big)^{1/p}.
	$$
	
	Repeating the same steps with respect to the first variable we obtain
	\begin{align*}
	\sup_{\substack{j_1=-1\\ j_2=-1}} 2^{2m|\jb|_1} \Big\|\sum\limits_{\kb \in \zz^2} \lambda_{2m;\jb,\kb}(\varphi) \chi_{I_{\jb,\kb}}  \Big \|_p \lesssim & \Big\| \|\varphi\|_{W_p^{2m}} \Big\|_{B_{p,1}^{1/p}} \\
	& \leq  \Big\| \|\varphi\|_{W_p^{2m}} \Big\|_{W_p^{2m}}= \|\varphi\|_{S_p^{2m}W}.
	\end{align*}
	
	Let now $j_1=-1$ and $j_2 \in \N_0$. In this case
	$$
	\lambda_{2m;\jb,\kb}(\varphi) = \sum\limits_{s=0}^{2m-2} (-1)^s N_{2m}(s+1) \Delta^{2m}_{2^{-j_2-1},2} \varphi\Big(k_1, \frac{2k_2+s}{2^{j_2+1}}\Big).
	$$
	We have
	\begin{align}
	\sup_{\substack{j_1=-1\\ j_2 \in \N_0}} 2^{2m|\jb|_1} \Big\|\sum\limits_{\kb \in \zz^2} \lambda_{2m;\jb,\kb}(\varphi) \chi_{I_{\jb,\kb}}  \Big \|^p_p=&\sup\limits_{j_2 \in N_0} C(m) 2^{2mp j_2} \notag \\
	&\times  \int\limits_{\re} \int\limits_{\re} \Big| \sum\limits_{k_1 \in \zz} \sum\limits_{k_2 \in \zz} \lambda_{2m;\jb,\kb}(\varphi)  \chi_{-1,k_1}(x_1) \chi_{j_2,k_2}(x_2)\Big|^p dx_1 dx_2 \notag \\
	 \lesssim &\sup\limits_{j_2 \in N_0}  2^{2mp j_2} \sum\limits_{k_1 \in \zz} \int\limits_{\re} \Big| \sum\limits_{k_2 \in \zz} \lambda_{2m;\jb,\kb}(\varphi)   \chi_{j_2,k_2}(x_2)\Big|^p dx_2 \notag\\
	 \leq &\sup\limits_{j_2 \in N_0}  2^{2mp j_2} 2^{-j_2} \sum\limits_{k_1 \in \zz} \sum\limits_{k_2 \in \zz} |\lambda_{2m;\jb,\kb}(\varphi)|^p. \label{est-j}
	\end{align}
	According to inequality (7.10) \cite{DL93}
	$$
	\Delta_h^r\varphi(x)=h^{r-1} \int\limits_{\re} \varphi^{(r)} N_r(h^{-1}(t-x)) dt,
	$$
	where $N_r$ is $r$th B-spline supported on $[0,r]$.
	Therefore,
	$$
	\Delta^{2m}_{2^{-j_2-1}} g\Big(\frac{2k_2+s}{2^{j_2+1}}\Big)=2^{-2mj_2}2^{-2m}2^{j_2+1} \int\limits_{2^{-j_2-1}(2k_2+s)}^{2^{-j_2-1}(2m+2k_2+s)} g^{(2m)}(t) N_{2m}(2^{j_2+1}t-2k_2-s) dt.
	$$
	By using H\"{o}lder inequalities we get
	\begin{align}
	\Big| \Delta^{2m}_{2^{-j_2-1}} g\Big(\frac{2k_2+s}{2^{j_2+1}}\Big)\Big|^p &\lesssim 2^{-2mj_2 p} 2^{j_2} \int\limits_{2^{-j_2-1}(2k_2+s)}^{2^{-j_2-1}(2m+2k_2+s)} |g^{(2m)}(t)|^p dt\notag\\
	&=2^{-2mj_2 p} 2^{j_2} \sum \limits_{i=s}^{s+2m-1} \int\limits_{2^{-j_2-1}(2k_2+i)}^{2^{-j_2-1}(2k_2+1+i)}|g^{(2m)}(t)|^p dt. \label{delta-in}
	\end{align}
	Now we can continue estimation of (\ref{est-j})
	\begin{align*}
	\sup_{\substack{j_1=-1\\ j_2 \in \N_0}} 2^{2m|\jb|_1}& \Big\|\sum\limits_{\kb \in \zz^2} \lambda_{2m;\jb,\kb}(\varphi) \chi_{I_{\jb,\kb}}  \Big \|^p_p\\& \lesssim  \sup\limits_{j_2 \in N_0}  2^{2mp j_2} 2^{-j_2} \sum\limits_{k_1 \in \zz} \sum\limits_{k_2 \in \zz} \Big|\sum\limits_{s=0}^{2m-2} (-1)^s N_{2m}(s+1) \Delta^{2m}_{2^{-j_2-1},2} \varphi\Big(k_1, \frac{2k_2+s}{2^{j_2+1}}\Big)\Big|^p\\
	& \leq \sup\limits_{j_2 \in N_0}  2^{2mp j_2} 2^{-j_2} \sum\limits_{s=0}^{2m-2} | N_{2m}(s+1)| \sum\limits_{k_1 \in \zz} \sum\limits_{k_2 \in \zz} \Big|\Delta^{2m}_{2^{-j_2-1},2} \varphi\Big(k_1, \frac{2k_2+s}{2^{j_2+1}}\Big)\Big|^p\\
	& \lesssim \sup\limits_{j_2 \in N_0} \sum\limits_{s=0}^{2m-2} | N_{2m}(s+1)| \sum \limits_{i=s}^{s+2m-1} \sum\limits_{k_1 \in \zz} \sum\limits_{k_2 \in \zz}\int\limits_{2^{-j_2-1}(2k_2+i)}^{2^{-j_2-1}(2k_2+1+i)}|\varphi^{(0,2m)}(k_1,t)|^p dt \\
	& \leq \sup\limits_{j_2 \in N_0} \sum\limits_{s=0}^{2m-2} | N_{2m}(s+1)| \sum \limits_{i=s}^{s+2m-1} \sum\limits_{k_1 \in \zz} \int\limits_{\re} |\varphi^{(0,2m)}(k_1,t)|^p dt \\
	& \lesssim \sum\limits_{k_1 \in \zz} \int\limits_{\re} |\varphi^{(0,2m)}(k_1,t)|^p dt.
	\end{align*}
	
	Using Lemma \ref{sic-ineq} we proceed
	$$
	\Big\|\sum\limits_{k_1\in\zz}|\varphi^{(0,2m)}(k_1,t)|^p \Big\|_p^p \lesssim \Big\| \|\varphi^{(0,2m)}(\cdot,t)\|^p_{B_{p,1}^{1/p}} \Big\|_p^p\leq \Big\| \|\varphi^{(0,2m)}(\cdot,t)\|^p_{W_p^{2m}} \Big\|_p^p=\|\varphi\|_{S_p^{2m}W}.
	$$
	
	The case $j_1\in N_0$, $j_2=-1$ works analogously and for $\jb \in \N_0^2$ we use the inequality (\ref{delta-in}) in both directions.
	
	Note that the left hand side of (\ref{w-b}) is the norm of the sequences space $s_{p,\infty}^{2m}b$. Therefore,
	\begin{align*}
\sigma_n(S_{p}^{2m}W,\mathcal{B}_{2m}^d)_q
	&\lesssim \sup \limits_{a \in s_{p,\infty}^{2m}b(\Omega)} \, \inf_{\substack{\Lambda\subset \nabla \\ \#\Lambda = n}} \, \inf_{\substack{\mu_{\jb,\kb} \\ \mu_{\jb,\kb}=0 \, \text{if} \, (\jb,\kb)\not\in \Lambda}} \Big\| \sum\limits_{\jb \in \N_{-1}^d}\Big|\sum\limits_{\kb \in \Z} (a_{\jb,\kb} -\mu_{\jb,\kb}) \chi_{I_{\jb,\kb}}(\x) \Big| \, \Big\|_{q}\\
	&=\sigma_n(s_{p,\infty}^{2m} b(\Omega),\mathcal{D})_{s_{q,1}^0 f}.
	\end{align*}
	
	In the case when $q=\infty$ we consider similar technique as in  Theorem \ref{best-m-term-1}. Taking into account that $u=\min\{q,1\}$ we will get in the end the quantity $\sigma_n(s_{p,\infty}^{2m} b(\Omega),\mathcal{D})_{s_{\infty,1}^0 b}$.
		Using (\ref{sic-lem}) we get for $p>1$ the inequality (\ref{m-term-3}).
\end{proof}

Now we consider $L_\infty$ metric. As a corollary from Theorem \ref{best-m-term-2} we get
\begin{cor}\label{cor-1}
	For $1<p<\infty$ and $\max\{1/p,1/2\}<r<2m$ the following inequality holds
$$
\sigma_n(S_p^rW,\mathcal{B}_{2m}^d)_\infty \lesssim n^{-r} (\log^{d-1}n)^{r+\frac{1}{2}}.
$$
\end{cor}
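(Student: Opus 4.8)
The plan is to reduce the statement to the $F$-case of Theorem \ref{best-m-term-2} by identifying the mixed Sobolev space with a Lizorkin--Triebel space of the correct fine index. Recall that for $1<p<\infty$ the Littlewood--Paley characterization of spaces with dominating mixed smoothness gives the norm equivalence $\|f\|_{S_p^rW}\asymp \|f\|_{S_{p,2}^rF}$, i.e. $S_p^rW=S_{p,2}^rF$ with constants depending only on $p,r,d$. Since this equivalence holds uniformly on the class, it passes through the supremum defining the best $n$-term quantity in (\ref{best-n}), so that
$$
\sigma_n(S_p^rW,\mathcal{B}_{2m}^d)_\infty \asymp \sigma_n(S_{p,2}^rF,\mathcal{B}_{2m}^d)_\infty.
$$

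Next I would simply invoke Theorem \ref{best-m-term-2} (ii) with the special choice $\theta=2$ and $q=\infty$. These parameters are admissible, since $p>1>1/2m$ and $\theta=2>1/2m$, and the smoothness condition $\max\{1/p,1/\theta\}<r<2m$ specializes to precisely the hypothesis $\max\{1/p,1/2\}<r<2m$ of the corollary. The theorem therefore yields
$$
\sigma_n(S_{p,2}^rF,\mathcal{B}_{2m}^d)_\infty \lesssim n^{-r}(\log n)^{(d-1)(r-1/2+1)}.
$$
It then remains only to rewrite the logarithmic factor: since $(d-1)(r-1/2+1)=(d-1)(r+1/2)$ and $(\log n)^{(d-1)(r+1/2)}=(\log^{d-1}n)^{r+1/2}$, the right-hand side equals $n^{-r}(\log^{d-1}n)^{r+1/2}$, which is the claimed bound.

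The only point that requires care is the first step, namely the identification $S_p^rW=S_{p,2}^rF$ on the relevant compactly supported class together with its compatibility with the best $n$-term functional; this is standard Littlewood--Paley theory for function spaces of dominating mixed smoothness (cf.\ the definitions collected in the Appendix) and introduces no restriction beyond $1<p<\infty$. Once it is in place the corollary is an immediate specialization of Theorem \ref{best-m-term-2}, and no estimates beyond those already established are needed.
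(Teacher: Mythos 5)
Your proposal is correct and is essentially the paper's own argument: the corollary is stated there as an immediate consequence of Theorem \ref{best-m-term-2}, which implicitly means exactly your reduction via the Littlewood--Paley identification $S_p^rW=S_{p,2}^rF$ and the specialization $\theta=2$, $q=\infty$ in part (ii), giving the exponent $(d-1)(r+1/2)$. Nothing in your write-up deviates from or adds gaps to that route.
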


As a particular case of Theorem \ref{best-m-term-3} we get the following results for Sobolev spaces of limiting smoothness $r=2m$. Note, that there is jump in the exponent of the logarithm. We do not know whether this is sharp.
\begin{cor}\label{cor-2}
Let $1<p<\infty$. Then 	
$$
\sigma_n(S_{p}^{2m}W,\mathcal{B}_{2m}^d)_\infty\lesssim n^{-2m} (\log^{d-1}n)^{2m+1}.
$$
\end{cor}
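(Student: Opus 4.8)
The plan is immediate: Corollary \ref{cor-2} is precisely the endpoint case $q=\infty$ of Theorem \ref{best-m-term-3}, so the whole task reduces to invoking that theorem with the correct choice of target metric. First I would note that Theorem \ref{best-m-term-3} is established for the entire range $0<q\leq\infty$ and that its conclusion
$$
\sigma_n(S_{p}^{2m}W,\mathcal{B}_{2m}^d)_q\lesssim n^{-2m}(\log^{d-1}n)^{2m+1}
$$
carries no dependence of the right-hand side on $q$. In particular the hypothesis of Corollary \ref{cor-2}, namely $1<p<\infty$, is already contained in those of Theorem \ref{best-m-term-3} once we fix $q=\infty$.

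Consequently I would simply specialize $q=\infty$ in the statement of Theorem \ref{best-m-term-3} to obtain
$$
\sigma_n(S_{p}^{2m}W,\mathcal{B}_{2m}^d)_\infty\lesssim n^{-2m}(\log^{d-1}n)^{2m+1},
$$
which is exactly the asserted bound. For completeness I would point to the place in the proof of Theorem \ref{best-m-term-3} where the value $q=\infty$ is treated separately: there one replaces $u=\min\{q,1\}$ by $u=1$ and arrives at the discrete quantity $\sigma_n(s_{p,\infty}^{2m}b(\Omega),\mathcal{D})_{s_{\infty,1}^0 b}$, to which the Hansen--Sickel estimate \eqref{sic-lem} applies with $y=b$, producing the same power of the logarithm.

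Since the corollary is a pure specialization, there is no genuine obstacle to overcome; the only point that must be verified is that the endpoint $q=\infty$ is actually covered by Theorem \ref{best-m-term-3}, and this holds because that theorem explicitly admits $q=\infty$ and its proof disposes of this value through the substitution $u=1$ together with the endpoint sequence-space estimate from \cite{HS2012}.
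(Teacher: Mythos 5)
Your proposal is correct and matches the paper exactly: Corollary \ref{cor-2} is stated there as a particular case of Theorem \ref{best-m-term-3} with $q=\infty$, and your observation about the endpoint being handled via $u=\min\{q,1\}=1$ and the quantity $\sigma_n(s_{p,\infty}^{2m}b(\Omega),\mathcal{D})_{s_{\infty,1}^0 b}$ with estimate \eqref{sic-lem} is precisely how the paper's proof of that theorem disposes of $q=\infty$.
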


\begin{rem}\label{low-bound}
Note that in \cite{Glen2018} the author also obtains lower estimates for best $n$-term approximation with respect to Faber-Schauder basis in some specific cases (see Theorem 6.20 in \cite{Glen2018}). We do not obtain the analog of this results with respect to the Faber spline basis. The main difficulties in the proof come from the fact that our basis functions are not compactly supported, although well localized.
\end{rem}

\subsection{Level-wise greedy algorithms} To obtain the upper estimates in Theorems \ref{best-m-term-1}, \ref{best-m-term-2} and \ref{best-m-term-3} we use estimates for the best $n$-term approximation of sequence spaces from Hansen and Sickel \cite{HS2012}. Together with the question of obtaining estimates for the quantity (\ref{best-n}), the question whether there is a constructive method is interesting. Is there a method that defines $\Lambda \subset \Omega$, $\# \Lambda=n$ and coefficients $c_\alpha$, $\alpha \in \Lambda$, such that the aggregate  $\sum_{\alpha\in \Lambda} c_\alpha u_\alpha$ realizes this estimate? For the case of ``large smoothness'' the corresponding constructive method was known already from the paper \cite{HS2012}. It is a level-wise greedy algorithm, see also \cite[6.3, Algo.\ 1]{Glen2018}. An analogous  algorithm will also work for the higher order Faber spline basis.  The idea of this level-wise greedy method is rather simple. We always consider a linear hyperbolic cross/sparse grid projection in an initial step (see for instance \cite{DD2011}) and adaptively throw away all those coefficients whose contribution is to small. In each level the theory predicts how many coefficients we can keep in order not to destroy the rate of convergence. Of course, we keep the largest coefficients in each level. In the ``large smoothness setting'' this number only depends on the level (or hyperbolic layer). We refer to \cite{HS2012} and \cite{Glen2018} for further details. In case of ``small smoothness'' a constructive algorithm has been given recently in \cite[6.3, Algo.\ 2]{Glen2018}. It is a level-wise greedy algorithm but with a higher amount of ``nonlinearity''. There, the number of coefficients to keep in each layer does not only depend on the respective layer, it also depends on the contribution of the entire layer to the norm. So the whole budget $n$ of coefficients to keep is proportionally distributed among the different initial layers according to their contribution to the norm. If this contribution is large we keep many coefficients of this level, if the contribution is small the level will be neglected.

In any case, the whole sense of this greedy operation is to reduce the amount of data needed to represent the function. This can be seen as a compression step. When data has to be transmitted or stored this will reduce the amount of storage space significantly.

A further interesting question is whether the so called ``pure greedy algorithm'' (see, for example, \cite{Tem2011}) will also give the same order of decay. This question is not investigated yet even for the Faber-Schauder basis.

\appendix
\section{Definitions and auxiliary statements}

\subsection{Definition of Besov and Triebel-Lizorkin spaces with mixed smoothness}
Let $S(\R)$ be the Schwartz space of infinitely times differentiable fast decreasing functions. By $S'(\R)$ we denote the topological dual of $S(\R)$ that is the space of tempered distributions.

In the sequel we will define Besov-Triebel-Lizorkin spaces of mixed smoothness on $\R$ via local mean kernels. Classically, they are defined via a smooth dyadic decomposition on the Fourier image, which represents a special case as we will point out below. For comments about the history of this characterization, we refer to \cite[Rem\ 4.6]{UU2015}. Let $\Psi_0, \Psi_1 \in S(\R)$ such that for some $\varepsilon>0$: 1) $|\mathcal{F}\Psi_0(\xi)|>0$ for $|\xi|<\varepsilon$; 2) $|\mathcal{F}\Psi_1(\xi)|>0$ for $\varepsilon/2<|\xi|<2\varepsilon$; 3) $D^{\alpha} \mathcal{F}\Psi_1(0)=0$ for $0\leq \alpha <L$. Then for $j\geq 1$ put
$$
\Psi_j(x):=2^{j-1}\Psi_1(2^{j-1}x).
$$
In other words, $\Psi_j$, $j\geq 1$, satisfies the $L$-th order moment condition hold, i.e. for all $0\leq \alpha<L$
$$
\int\limits_{\re} x^\alpha \Psi_j(x)dx=0.
$$
For $\jb \in \N_0^d$ we define $\Psi_{\jb}(\x)=\prod\limits_{i=1}^d \Psi_{j_i}(x_i)$ for $\x \in \R$.

\begin{defi}\label{local-mean} \cite{Vy06,T08_2}
	Let $0< p,\theta \leq \infty$ ($p<\infty$ for $F$-case), $\{\Psi_{\jb}\}_{\jb \in \N_0^d}$ as defined above with $L>r$. Then
	$$
	\|f\|_{S_{p,\theta}^rB(\R)}\asymp
	\begin{cases}
	\Big(\sum\limits_{\jb \in \N_0^d} 2^{\theta r |\jb|_1} \|\Psi_{\jb}*f\|_p^\theta   \Big)^{1/\theta}, & 0< \theta <\infty, \\
	\sup \limits_{\jb \in \N_0^d}2^{r |\jb|_1} \|\Psi_{\jb}*f\|_p, & \theta=\infty,
	\end{cases}
	$$
	and
	$$
	\|f\|_{S_{p,\theta}^rF(\R)}\asymp
	\begin{cases}
	\Big\|\Big(\sum\limits_{\jb \in \N_0^d} 2^{\theta r |\jb|} |\Psi_{\jb}*f|^\theta   \Big)^{1/\theta}\Big\|_p, & 0< \theta <\infty, \\
	\Big\|\sup \limits_{\jb \in \N_0^d}2^{r |\jb|_1} |\Psi_{\jb}*f|\Big\|_p, & \theta=\infty,
	\end{cases}
	$$	
\end{defi}
Clearly, this definition is independent of the choice of the kernel. Different kernels produce different equivalent quasi-norms. Of particular interest is the classical decomposition of unity, constructed as follows. Let $\varphi \in S(\re)$ but compactly supported such that $\varphi(x) = 1$ if $|x|\leq 1$ and $\varphi(x) = 0$ if $|x|>2$. Then we put $\Psi_0 = \cf^{-1}\varphi$ and $\Psi_1:=2\Psi_0(2\cdot)-\Psi_0$. After tensorization we define
$$
	\delta_{\jb}[f]:=\Psi_{\jb}\ast f\,.
$$
The crucial feature of this definition is the fact that now every $f \in S'(\R)$ can be decomposed as
\begin{equation} \label{frep}
f=\sum \limits_{\jb \in \N_0^d} \delta_{\jb}[f]
\end{equation}
with convergence in $S'(\R)$.

Further we define discrete function spaces $s_{p,\theta}^rb$ and $s_{p,\theta}^rf$.
For $j \in \N_{-1}$ and $k \in \zz$ we define the intervals
$$
I_{j,k}:=
\begin{cases}
[2^{-j}k,2^{-j}(k+1)], & j \geq 0, \\
[k-1/2,k+1/2], & j=-1,
\end{cases}
$$
then for $\jb\in \N_{-1}^d$ and $\kb\in\Z$ we put $I_{\jb,\kb}=\prod\limits_{i=1}^d I_{j_i,k_i}$.
Then the corresponding characteristic functions
$$
\chi_{\jb,\kb}(\x):=
\begin{cases}
1, & \x \in I_{\jb,\kb},\\
0, & \text{otherwise}.
\end{cases}
$$

\begin{defi}\label{besov-seq}
	Let $r \in \re$ and $0< p,\theta \leq \infty$.	 By $s_{p,\theta}^rb$ and $s_{p,\theta}^rf$ ($p<\infty$ for $f$-case) we define the spaces of sequences of coefficients $\lambda=(\lambda_{\jb,\kb})_{\jb \in \N_{-1}^d, \kb \in \Z}$ with the finite norms
	$$
	\|\lambda\|_{s_{p,\theta}^rb}:=
	\begin{cases}
	\Big(\sum\limits_{\jb \in \N_{-1}^d} 2^{\theta r |\jb|_1} \Big\|\sum\limits_{\kb \in \Z} \lambda_{\jb,\kb} \chi_{\jb,\kb} \Big\|_p^\theta \Big)^{1/\theta},& 0< \theta <\infty, \\
	\sup \limits_{\jb \in \N_{-1}^d}2^{r |\jb|_1} \Big\|\sum\limits_{\kb \in \zz} \lambda_{\jb,\kb} \chi_{\jb,\kb} \Big\|_p, & \theta=\infty.
	\end{cases}
	$$
	and
	$$
	\|\lambda\|_{s_{p,\theta}^rf}:=
	\begin{cases}
	\Big\| \Big(\sum\limits_{\jb \in \N_{-1}^d} 2^{\theta r |\jb|_1} \Big|\sum\limits_{\kb \in \Z} \lambda_{\jb,\kb} \chi_{\jb,\kb} \Big|^\theta \Big)^{1/\theta}\Big\|_p,& 0< \theta <\infty, \\
	\Big\| \sup \limits_{\jb \in \N_{-1}^d}2^{r |\jb|_1} \Big| \sum\limits_{\kb \in \Z} \lambda_{\jb,\kb} \chi_{\jb,\kb} \Big| \Big\|_p, & \theta=\infty.
	\end{cases}
	$$
	respectively.
\end{defi}

\subsection{Maximal inequalities}
\begin{defi}\label{peetre}
	Let $\boldsymbol{b}>0$ and $a>0$. Then for $f \in L_1(\R)$ with $\mathcal{F}f$ compactly supported  we define the Peetre maximal operator
	$$
	P_{\boldsymbol{b},a}f(x):=\sup\limits_{\boldsymbol{y} \in \R}\dfrac{|f(\boldsymbol{y})|}{(1+b_1|x_1-y_1|)^{a}\cdot \ldots \cdot (1+b_d|x_d-y_d|)^{a}}.
	$$
\end{defi}

\begin{defi}\label{hardy}
	For a locally integrable function $f: \R\rightarrow \mathbb{C}$ we define the Hardy-Littlewood maximal operator defined by
	$$
	(Mf)(\x)=\sup\limits_{\x \in Q}\frac{1}{Q}\int\limits_{Q}|f(\boldsymbol{y})| d\boldsymbol{y}, \ \ \x \in \re,
	$$
	where the supremum is taken over all segments that contain $\x$.
\end{defi}

\begin{lem}\label{max-ineq}
	For $1<p<\infty$ and $1<q\leq \infty$ there exists a constant $c>0$ such that
	$$
	\Big\|\Big(\sum\limits_{\lb \in I}|Mf_{\lb}|^q \Big)^{1/q} \Big\|_p\leq c \Big\|\Big(\sum\limits_{\lb \in I}|f_{\lb}|^q \Big)^{1/q} \Big\|_p
	$$
	holds for all sequences $\{f_{\lb}\}_{\lb \in I}$ of locally Lebesgue integrable functions on $\R$.
\end{lem}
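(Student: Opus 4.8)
The inequality is the classical Fefferman--Stein vector-valued maximal inequality, and the plan is to reduce it to the scalar mapping properties of the operator $M$ from Definition \ref{hardy} together with the scalar weighted inequality of Fefferman and Stein. First I would dispose of the two extreme cases. For $q=\infty$ the monotonicity of $M$ gives the pointwise bound $\sup_{\lb\in I}|Mf_{\lb}(\x)|\le M\big(\sup_{\lb\in I}|f_{\lb}|\big)(\x)$, so the claim follows at once from the boundedness of $M$ on $L_p(\R)$, $1<p<\infty$. For $q=p$ Tonelli's theorem and the same scalar bound yield
$$
\Big\|\Big(\sum_{\lb\in I}|Mf_{\lb}|^q\Big)^{1/q}\Big\|_p^p=\sum_{\lb\in I}\|Mf_{\lb}\|_p^p\ls \sum_{\lb\in I}\|f_{\lb}\|_p^p=\Big\|\Big(\sum_{\lb\in I}|f_{\lb}|^q\Big)^{1/q}\Big\|_p^p.
$$

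For the range $q<p<\infty$ the main tool will be duality in the exponent $p/q>1$ combined with the scalar weighted inequality
$$
\int_{\R}(Mf)^q\,w\,d\x\ls \int_{\R}|f|^q\,(Mw)\,d\x,\qquad w\ge 0,
$$
which holds for every $q>1$. Writing $F:=\sum_{\lb\in I}|Mf_{\lb}|^q$, I would choose a nonnegative $w$ with $\|w\|_{(p/q)'}=1$ realizing the $L_{p/q}$ norm of $F$, apply the weighted inequality termwise in $\lb$, interchange sum and integral, and then use H\"older together with the boundedness of $M$ on $L_{(p/q)'}(\R)$ (note $(p/q)'>1$). This gives $\|F\|_{p/q}\ls\big\|\sum_{\lb\in I}|f_{\lb}|^q\big\|_{p/q}$, which is exactly the assertion after taking $q$-th roots.

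The remaining range $1<p<q<\infty$ I would obtain by interpolation, viewing $\{f_{\lb}\}\mapsto\{Mf_{\lb}\}$ as a sublinear map between the vector-valued spaces $L_p(\ell_q)$. Having already established boundedness on the diagonal $L_s(\ell_s)$ for every $s>1$ and on $L_t(\ell_\infty)$ for every $t>1$, the pair $(1/p,1/q)$ with $1/p>1/q$ can be written as a convex combination $(1/p,1/q)=(1-\theta)(1/s,1/s)+\theta(1/t,0)$ with $s,t>1$ and $\theta\in(0,1)$; indeed the second coordinate forces $1-\theta=s/q$ and the first gives $\theta/t=1/p-1/q>0$, and a short check shows that choosing $s$ close to $1$ keeps $t>1$. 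Complex interpolation of the scale $[L_s(\ell_s),L_t(\ell_\infty)]_\theta=L_p(\ell_q)$ then produces the desired estimate, the sublinearity of $M$ being handled in the standard way by linearizing the maximal operator.

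I expect the scalar Fefferman--Stein weighted inequality to be the genuine analytic core of the argument; the two endpoint cases are elementary and the interpolation step is formal once the scale of $\ell_q$-valued $L_p$ spaces and the linearization of $M$ are in place. Since the lemma is used here only as an auxiliary harmonic-analysis fact, one may alternatively simply invoke it from the literature.
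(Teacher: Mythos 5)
The paper contains no proof of this lemma at all: it is the classical Fefferman--Stein vector-valued maximal inequality, placed in Appendix A.2 among auxiliary harmonic-analysis facts and simply invoked (without even a citation), which is precisely the fallback you offer in your last sentence. So the comparison here is between ``quote it'' (the paper) and ``prove it'' (you), and your blind proof is correct and essentially the standard one. The cases $q=\infty$ (pointwise bound $\sup_{\lb}Mf_{\lb}\le M\big(\sup_{\lb}|f_{\lb}|\big)$ plus scalar $L_p$-boundedness) and $q=p$ are handled correctly, and your duality argument for $q<p$ via the scalar weighted inequality $\int_{\R}(Mf)^q w\,d\x\ls\int_{\R}|f|^q(Mw)\,d\x$ together with the boundedness of $M$ on $L_{(p/q)'}(\R)$ is exactly the classical reduction, with the exponent bookkeeping right. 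For $1<p<q<\infty$ your convexity check is also correct ($1-\theta=s/q$, $\theta/t=1/p-1/q$, and $s$ close to $1$ forces $t>1$ because $1-1/p>0$). Two points would need care in a written-up version: (i) the identity $[L_s(\ell_s),L_t(\ell_\infty)]_\theta=L_p(\ell_q)$ involves an $\ell_\infty$-valued endpoint, and the generic vector-valued interpolation theorems (e.g.\ Bergh--L\"ofstr\"om, Thm.\ 5.1.2) are usually stated under finiteness assumptions, so one should either cite Calder\'on's product formula or, cleaner, apply the Benedek--Panzone mixed-norm Riesz--Thorin theorem (which allows all exponents in $[1,\infty]$) directly to the linearized operators; (ii) the linearization of $M$ must be done coordinate-wise, with endpoint constants uniform over all measurable selections, before passing to the supremum. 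Note also that the more common textbook route for $1<p<q$ (Stein, Grafakos) avoids the $\ell_\infty$ endpoint entirely: one proves an $L_1(\ell_q)\to L_{1,\infty}(\ell_q)$ weak-type bound by a Calder\'on--Zygmund decomposition of $\|\{f_{\lb}(\x)\}\|_{\ell_q}$ and then uses Marcinkiewicz interpolation with the diagonal; either route is legitimate, and for the purposes of this paper a citation alone would also have sufficed.
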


\begin{lem}\label{peetre-ineq} \cite[Lemma 3.3.1]{TDiff06}
	Let $a,b>0$, $m\in \N$, $h \in \re$ and $f\in L_1(\re)$ with $\supp \mathcal{F}f\subset [-b,b]$. Then there exists a constant $C>0$ independent of $f,b$ and $h$ such that
	$$
	|\Delta_h^m f(x)|\leq C \min\{1,|bh|^m\} \max\{1,|bh|^a\} P_{b,a}f(x)
	$$
	holds.
\end{lem}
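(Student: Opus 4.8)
The plan is to prove the estimate separately in the two regimes $b|h|\le 1$ and $b|h|>1$, which is exactly where the factors $\min\{1,|bh|^m\}$ and $\max\{1,|bh|^a\}$ switch their behaviour. Two auxiliary facts drive the argument. The first is a pointwise Bernstein inequality in Peetre form: for band-limited $f$ with $\supp\cf f\subset[-b,b]$ one has $|f^{(m)}(y)|\lesssim b^m P_{b,a}f(y)$ for all $y\in\re$. The second is the slowly varying property of the Peetre maximal function, namely $P_{b,a}f(y)\le (1+b|x-y|)^a\,P_{b,a}f(x)$, which follows at once from the elementary inequality $1+b|x-z|\le(1+b|x-y|)(1+b|y-z|)$ after inserting it into the definition of $P_{b,a}f(y)$; this is precisely the device already used in \eqref{pin}.

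For the small-step regime $b|h|\le1$ I would start from the integral representation of the difference by the $m$-th derivative,
\[
\Delta_h^m f(x)=h^m\int_{[0,1]^m}f^{(m)}\bigl(x+h(u_1+\dots+u_m)\bigr)\,du,
\]
so that $|\Delta_h^m f(x)|\le |h|^m\sup_{|y-x|\le m|h|}|f^{(m)}(y)|$. Applying the Bernstein inequality to $f^{(m)}$ and then the slowly varying property gives, for $y$ with $|x-y|\le m|h|$,
\[
|f^{(m)}(y)|\lesssim b^m P_{b,a}f(y)\le b^m(1+bm|h|)^a P_{b,a}f(x)\lesssim b^m P_{b,a}f(x),
\]
where in the last step I use $b|h|\le1$ to bound $(1+bm|h|)^a$ by a constant depending only on $m$ and $a$. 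Combining these yields $|\Delta_h^m f(x)|\lesssim |bh|^m P_{b,a}f(x)$, which is the claimed bound since $\min\{1,|bh|^m\}=|bh|^m$ and $\max\{1,|bh|^a\}=1$ here.

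For the large-step regime $b|h|>1$ I would instead expand the difference by the binomial formula, $\Delta_h^m f(x)=\sum_{k=0}^m\binom{m}{k}(-1)^{m-k}f(x+kh)$, and estimate each term directly through the slowly varying property: $|f(x+kh)|\le(1+bk|h|)^a P_{b,a}f(x)\le (1+bm|h|)^a P_{b,a}f(x)$. Since $b|h|>1$ forces $1+bm|h|\le 2m\,b|h|$, this gives $|f(x+kh)|\lesssim |bh|^a P_{b,a}f(x)$, and summing the $m+1$ terms yields $|\Delta_h^m f(x)|\lesssim |bh|^a P_{b,a}f(x)$, matching the bound because now $\min\{1,|bh|^m\}=1$ and $\max\{1,|bh|^a\}=|bh|^a$.

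The main obstacle is the Bernstein--Peetre inequality $|f^{(m)}(y)|\lesssim b^m P_{b,a}f(y)$, which is the only non-elementary ingredient. I would establish it by a Fourier multiplier argument: fixing a smooth $\Phi$ with $\Phi\equiv1$ on $[-1,1]$ and $\supp\Phi\subset[-2,2]$, the support condition on $\cf f$ lets me write $\widehat{f^{(m)}}(\xi)=(i\xi)^m\Phi(\xi/b)\cf f(\xi)=b^m\eta(\xi/b)\cf f(\xi)$ with $\eta(\zeta)=(i\zeta)^m\Phi(\zeta)\in C_0^\infty$. Hence $f^{(m)}=b^m(f*\psi_b)$, where $\psi=\cfi\eta$ is Schwartz and $\psi_b(\cdot)=b\psi(b\cdot)$; estimating the convolution by $|f(z)|\le(1+b|y-z|)^a P_{b,a}f(y)$ and using that $\int(1+|u|)^a|\psi(u)|\,du<\infty$ produces the required factor $b^m$ together with a constant independent of $f,b,h$. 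Care is needed only to track the scaling in $b$ so that the final constant is genuinely $b$-independent.
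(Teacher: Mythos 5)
Your proof is correct, and it follows essentially the same route as the source this lemma is quoted from: the paper itself gives no proof (it cites \cite[Lemma 3.3.1]{TDiff06}), and the standard argument there is exactly your case distinction $|bh|\le 1$ versus $|bh|>1$, with the integral representation of $\Delta_h^m$ by $f^{(m)}$ plus a Bernstein-type inequality in the small-step regime, and the binomial expansion plus the defining property of $P_{b,a}f$ in the large-step regime. Your Fourier-multiplier derivation of $|f^{(m)}(y)|\lesssim b^m P_{b,a}f(y)$, with the dilation tracking that keeps the constant $b$-independent, correctly supplies the only nontrivial ingredient.
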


\begin{lem}\label{peetre-ineq1}\cite[1.6.4]{SchTrieb87}
	Let $0< p \leq \infty$,  $\boldsymbol{b}>0$ and $a>1/p$. For a bandlimited function $f \in L_1(\R)$ with $\supp \mathcal{F}f\subset [-\boldsymbol{b},\boldsymbol{b}]$	the following inequality holds
	$$
	\|P_{\boldsymbol{b},a}f\|_p	\leq C \|f\|_p,
	$$	
	where $C$ is some constant independent on $f$ and $\boldsymbol{b}$.
\end{lem}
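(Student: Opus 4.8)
The plan is to deduce the inequality from a pointwise domination of the Peetre maximal function $P_{\boldsymbol b,a}f$ (Definition~\ref{peetre}) by the Hardy--Littlewood maximal operator $M$ (Definition~\ref{hardy}), followed by the $L_p$-boundedness of $M$. First I would normalise the frequency scale by a dilation: setting $g(\x):=f(b_1^{-1}x_1,\dots,b_d^{-1}x_d)$ one has $\supp\cf g\subset[-\boldsymbol 1,\boldsymbol 1]$, while $\|P_{\boldsymbol b,a}f\|_p$, $\|f\|_p$ and $\|P_{\boldsymbol 1,a}g\|_p$, $\|g\|_p$ all carry the same power $\prod_i b_i^{1/p}$ of the scaling factor, so it suffices to treat $\boldsymbol b=\boldsymbol 1$. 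Since both the weight $\prod_i(1+|x_i-y_i|)^a$ and the spectral box have product structure, the core estimate can be carried out one variable at a time and iterated; below I phrase it directly for the $d$-dimensional product.

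The heart of the matter is a reproducing identity combined with the classical ``power trick.'' Fix a tensor-product function $\Omega\in S(\R)$ with $\cf\Omega\equiv 1$ on $[-\boldsymbol 1,\boldsymbol 1]$, so that $f=f*\Omega$ whenever $\supp\cf f\subset[-\boldsymbol 1,\boldsymbol 1]$, and $|\Omega(\x)|\ls_N\prod_i(1+|x_i|)^{-N}$ for every $N$. Fix $0<t\le 1$. In the convolution I would split $|f(\boldsymbol z)|=|f(\boldsymbol z)|^{1-t}|f(\boldsymbol z)|^t$, estimate the first factor by $(P_{\boldsymbol 1,a}f(\x))^{1-t}\prod_i(1+|x_i-z_i|)^{a(1-t)}$ straight from the definition of $P_{\boldsymbol 1,a}$, and use the submultiplicativity $1+|x_i-y_i|\le(1+|x_i-z_i|)(1+|z_i-y_i|)$ together with the rapid decay of $\Omega$ (taking $N$ large). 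This yields the pointwise bound
$$
P_{\boldsymbol 1,a}f(\x)\ls (P_{\boldsymbol 1,a}f(\x))^{1-t}\int_{\R}\frac{|f(\boldsymbol z)|^t}{\prod_{i}(1+|x_i-z_i|)^{at}}\,d\boldsymbol z.
$$
Provided $at>1$ in each variable, a dyadic-annulus decomposition shows the integral is $\ls M(|f|^t)(\x)$; dividing by the (finite) quantity $(P_{\boldsymbol 1,a}f(\x))^{1-t}$ then gives $P_{\boldsymbol 1,a}f(\x)\ls [M(|f|^t)(\x)]^{1/t}$.

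With this domination in hand I would choose $t$ with $1/a<t\le 1$; then $p/t>1$, and Lemma~\ref{max-ineq} (boundedness of $M$ on $L_{p/t}$) yields $\|P_{\boldsymbol 1,a}f\|_p\ls\big\||f|^t\big\|_{p/t}^{1/t}=\|f\|_p$. The hard part will be matching this with the \emph{sharp} threshold $a>1/p$: the power trick as above needs $at>1$ with $t\le 1$, hence $a>\max\{1,1/p\}$, which already covers the quasi-Banach range $p\le 1$ that is decisive for the nonlinear theory, but leaves the gap $1/p<a\le 1$ when $p>1$. To close it I would pass to the sampling route, bounding $P_{\boldsymbol 1,a}f(\x)$ by $\sup_{\boldsymbol k}|f(\boldsymbol k)|\prod_i(1+|x_i-k_i|)^{-a}$ (local sup of a bandlimited function is controlled by nearby samples), replacing the $\ell_\infty$-sup by an $\ell_p$-sum, integrating in $\x$ using $\int_{\re}(1+|x-k|)^{-ap}\,dx<\infty$ iff $ap>1$, and invoking the Plancherel--Pólya inequality $\sum_{\boldsymbol k}|f(\boldsymbol k)|^p\asymp\|f\|_p^p$. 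A second technical obstacle, handled by first proving the estimate for bandlimited Schwartz functions and then extending by density, is the a priori finiteness of $P_{\boldsymbol 1,a}f(\x)$ needed to justify the division above.
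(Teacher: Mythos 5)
You cannot be checked against the paper's own argument here, because the paper has none: Lemma \ref{peetre-ineq1} is imported without proof from Schmeisser--Triebel \cite[1.6.4]{SchTrieb87}, so the only meaningful comparison is with the classical argument behind that citation. Your proof is correct in outline, and it takes a genuinely different route. The classical proof obtains the pointwise bound $P_{\boldsymbol{b},a}f(\x)\lesssim\bigl[M(|f|^{t})(\x)\bigr]^{1/t}$ with $t=1/a$ for \emph{all} $t>0$ in one stroke: per variable one uses the mean-value inequality $|f(y)|^{t}\lesssim \rho^{-1}\int_{|z-y|\le\rho}|f(z)|^{t}\,dz+\rho^{t}\sup_{|z-y|\le\rho}|f'(z)|^{t}$ (valid for every $t>0$, not only $t\le1$), a Bernstein-type estimate $P_{\boldsymbol{b},a}(\partial_i f)\lesssim b_i\,P_{\boldsymbol{b},a}f$, and absorption of the derivative term by taking $\rho$ a small multiple of $1/b_i$; this removes the restriction $t\le 1$ that forces your case distinction, and then $a>1/p$ suffices uniformly. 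Your power trick, as you correctly diagnose, only reaches $a>\max\{1,1/p\}$, and your repair of the remaining range $p>1$, $1/p<a\le1$ by discretization plus Plancherel--P\'olya is sound and non-circular, since for $p>1$ the Plancherel--P\'olya inequality follows from H\"older applied to the reproducing convolution; this variant is derivative-free and arguably more elementary, and fits the sampling theme of the paper. Three points should be made explicit to close it fully: (1) the discretization $P_{\boldsymbol{1},a}f(\x)\lesssim\sup_{\kb}|f(\delta\kb)|\prod_i(1+|x_i-\delta k_i|)^{-a}$ requires strict oversampling --- for spectrum in $[-\boldsymbol{1},\boldsymbol{1}]$ the integer lattice is at or below the Nyquist rate in some Fourier conventions (with the $2\pi$-convention, $\sin(\pi x)g(x)$ with $g$ bandlimited to $[-1/4,1/4]$ vanishes on $\zz$ without vanishing identically), so write the lattice as $\delta\zz^d$ with $\delta$ small; (2) in the product-weight setting the dyadic-annulus bound of $\int|f(\boldsymbol{z})|^{t}\prod_i(1+|x_i-z_i|)^{-at}\,d\boldsymbol{z}$ needs the iterated (strong) maximal operator, since the radial majorant of $\prod_i(1+|u_i|)^{-at}$ is not integrable on $\R$ when $at\le d$ --- your remark about arguing one variable at a time is exactly the right fix and should be carried through; (3) the case $p=\infty$ is trivial ($P_{\boldsymbol{b},a}f\le\|f\|_\infty$ pointwise) but is not covered by either of your two branches as written. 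With these additions your argument is a complete, self-contained proof of the cited lemma.
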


\begin{lem}\label{peetre-ineq2}\cite[1.6.4]{SchTrieb87}
	Let $0< p,\theta \leq \infty$, $p\neq \infty$, $\boldsymbol{b}^{\lb}>0$ for $\lb \in I$ and $a>\max\{1/p,1/\theta\}$. There exists a constant such that for all systems of functions $\{f_{\lb}\}_{\lb \in I}$ with $\supp \mathcal{F}f_{\lb}\subset [-\boldsymbol{b}^{\lb},\boldsymbol{b}^{\lb}]$ the following inequality holds
	$$
	\Big\|\Big( \sum\limits_{\lb \in I}|P_{\boldsymbol{b}^{\lb},a}f_{\lb}|^\theta \Big)^{1/\theta} \Big\|_p\leq C \Big\|\Big( \sum\limits_{\lb \in I}|f_{\lb}|^\theta \Big)^{1/\theta} \Big\|_p.
	$$	
\end{lem}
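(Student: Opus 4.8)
The plan is to deduce the vector-valued Peetre inequality from the vector-valued Hardy--Littlewood maximal inequality (Lemma \ref{max-ineq}) by means of a pointwise estimate that replaces each band-limited function inside the Peetre operator with a power of the maximal function of $f_{\lb}$.

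The key step, and the one I expect to be the main obstacle, is the following band-limited pointwise bound: for every $\lb\in I$ and every $t>0$ with $at>1$ one has
$$
P_{\boldsymbol{b}^{\lb},a}f_{\lb}(\x)\lesssim \big(M(|f_{\lb}|^t)(\x)\big)^{1/t},\qquad \x\in\R,
$$
with a constant independent of $\lb$, of $\boldsymbol{b}^{\lb}$ and of $f_{\lb}$, where $M$ is the (strong, tensorised) maximal operator from Definition \ref{hardy}. To prove it I would fix a tensor-product Schwartz function $\Phi$ whose Fourier transform is identically $1$ on the unit box, so that after rescaling $f_{\lb}=f_{\lb}\ast\Phi_{\boldsymbol{b}^{\lb}}$ where $\widehat{\Phi_{\boldsymbol{b}^{\lb}}}\equiv 1$ on $[-\boldsymbol{b}^{\lb},\boldsymbol{b}^{\lb}]$. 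Writing out the convolution, decomposing each coordinate axis into dyadic annuli around $\x$, and using the rapid decay of $\Phi$ in each variable reduces the quotient $|f_{\lb}(\boldsymbol{y})|/\prod_{i=1}^d(1+b_i^{\lb}|x_i-y_i|)^a$ to a weighted sum of dyadic averages of $|f_{\lb}|$; the weights form a geometric series that converges precisely because $at>1$, and H\"older turns the averages of $|f_{\lb}|$ into averages of $|f_{\lb}|^t$, yielding the claimed maximal bound. This is the multivariate analogue of the one-dimensional estimate behind Lemma \ref{peetre-ineq1}, the difficulty being the tensor structure of both the Peetre weight and the spectral support, which forces one to run the dyadic argument coordinate by coordinate against the strong maximal operator.

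With the pointwise bound in hand the remainder is formal. Since $a>\max\{1/p,1/\theta\}=1/\min\{p,\theta\}$, the interval $(1/a,\min\{p,\theta\})$ is non-empty, so I would fix $t$ in it; then $at>1$, while $p/t>1$ and $\theta/t>1$ (here $p\neq\infty$ is used). Putting $g_{\lb}:=|f_{\lb}|^t$ and raising the pointwise estimate to the power $\theta$ gives
$$
\Big(\sum_{\lb\in I}|P_{\boldsymbol{b}^{\lb},a}f_{\lb}|^\theta\Big)^{1/\theta}\lesssim \Big[\Big(\sum_{\lb\in I}(Mg_{\lb})^{\theta/t}\Big)^{t/\theta}\Big]^{1/t}.
$$
Taking $\|\cdot\|_p$, applying the rescaling identity $\|h^{1/t}\|_p=\|h\|_{p/t}^{1/t}$ exactly as in \eqref{m-ineq}, and then invoking Lemma \ref{max-ineq} with the admissible exponents $p/t>1$ and $\theta/t>1$, I obtain
\begin{align*}
\Big\|\Big(\sum_{\lb\in I}|P_{\boldsymbol{b}^{\lb},a}f_{\lb}|^\theta\Big)^{1/\theta}\Big\|_p
&\lesssim \Big\|\Big(\sum_{\lb\in I}(Mg_{\lb})^{\theta/t}\Big)^{t/\theta}\Big\|_{p/t}^{1/t}\\
&\lesssim \Big\|\Big(\sum_{\lb\in I}g_{\lb}^{\theta/t}\Big)^{t/\theta}\Big\|_{p/t}^{1/t}
=\Big\|\Big(\sum_{\lb\in I}|f_{\lb}|^\theta\Big)^{1/\theta}\Big\|_p,
\end{align*}
which is the assertion. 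The case $\theta=\infty$ is identical, with $\sup_{\lb\in I}$ in place of the $\ell_\theta$-sum and the scalar boundedness of $M$ on $L_{p/t}$ in place of the vector-valued inequality.
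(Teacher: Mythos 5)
The paper itself gives no proof of this lemma (it is quoted from \cite[1.6.4]{SchTrieb87}), and your overall scheme --- a pointwise domination of the Peetre maximal function by $\big(M(|f_{\lb}|^t)\big)^{1/t}$, followed by the rescaling identity \eqref{m-ineq} and the vector-valued maximal inequality of Lemma \ref{max-ineq} --- is exactly the classical route of that reference; the second half of your argument, from the pointwise bound onward, is correct as written. The gap is inside your proof of the key pointwise estimate, and it sits precisely in the parameter range this paper needs. Your dyadic-annuli argument ends with ``H\"older turns the averages of $|f_{\lb}|$ into averages of $|f_{\lb}|^t$''. That step is valid only for $t\geq 1$: for $t<1$ the power-mean (Jensen) inequality gives the \emph{reverse} bound $\big(\tfrac{1}{|Q|}\int_Q|f|^t\big)^{1/t}\leq \tfrac{1}{|Q|}\int_Q|f|$, so averages of $|f|$ cannot be dominated by $\big(M(|f|^t)\big)^{1/t}$. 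This failure is not an artifact of your method: even for band-limited $f$ (say $N$ unit bumps $\Phi(\cdot-jR)$ spread over an interval of length $L=NR$) one has $\tfrac{1}{L}\int|f|\sim 1/R$ while $\big(\tfrac{1}{L}\int|f|^t\big)^{1/t}\sim (1/R)^{1/t}\ll 1/R$, so the intermediate inequality you rely on is simply false, and no amount of smoothing by $\Phi_{\boldsymbol{b}^{\lb}}$ rescues it. Since your exponent must satisfy $1/a<t<\min\{p,\theta\}$, the case $t\geq 1$ can only be arranged when $\min\{p,\theta\}>1$; the lemma, however, is stated for all $0<p,\theta\leq\infty$ and is used in this paper in the quasi-Banach range (Proposition \ref{charact2}(ii) invokes it with $p,\theta$ possibly below $1$), where $t<1$ is forced. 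As it stands, your proposal proves only the Banach case $\min\{p,\theta\}>1$.

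The standard repair --- and what the cited reference actually does --- is Peetre's absorption trick, which keeps the Peetre maximal function itself in the estimate rather than trying to eliminate it in one shot. Inside the convolution $f_{\lb}=f_{\lb}\ast\Phi_{\boldsymbol{b}^{\lb}}$ write $|f_{\lb}(\boldsymbol{z})|=|f_{\lb}(\boldsymbol{z})|^{t}\,|f_{\lb}(\boldsymbol{z})|^{1-t}$, bound the second factor by $\big(P_{\boldsymbol{b}^{\lb},a}f_{\lb}(\x)\big)^{1-t}\prod_{i=1}^d(1+b_i^{\lb}|x_i-z_i|)^{a(1-t)}$, and then run your coordinate-wise dyadic estimate on the remaining integral of $|f_{\lb}|^t$; using $at\geq 1$ this yields
\begin{equation*}
P_{\boldsymbol{b}^{\lb},a}f_{\lb}(\x)\;\lesssim\;\big(P_{\boldsymbol{b}^{\lb},a}f_{\lb}(\x)\big)^{1-t}\,M\big(|f_{\lb}|^t\big)(\x),
\end{equation*}
after which the factor $\big(P_{\boldsymbol{b}^{\lb},a}f_{\lb}(\x)\big)^{1-t}$ must be absorbed into the left-hand side. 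This absorption requires the a priori finiteness $P_{\boldsymbol{b}^{\lb},a}f_{\lb}(\x)<\infty$, which needs its own justification (one may assume each $f_{\lb}\in L_p(\R)$, since otherwise the right-hand side of the lemma is infinite, and then Nikolskii's inequality for band-limited functions gives $f_{\lb}\in L_\infty$, hence $P_{\boldsymbol{b}^{\lb},a}f_{\lb}\leq\|f_{\lb}\|_\infty<\infty$). Neither the absorption step nor the finiteness argument can be dispensed with; once they are added, the rest of your proof goes through verbatim.
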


\subsection{Mixed differences}

For a function $f$ defined on $\R$ we define the first order difference with step $h$ in direction $i \in [d]$
$$
\Delta_h^{1,i}f(x):=f(x+h \boldsymbol{e}_i)-f(x),
$$
where $\boldsymbol{e}_i$ is unit basis vector with $i$-th coordinate equal 1. Then $m$-th order difference is defined iteratively by
$$
\Delta_h^{m,i}f(x):=\Delta_h^{1,i}\Delta_h^{m-1,i}f(x).
$$
For each subset $e\subset [d]$ and a step-vector $\boldsymbol{h} \in \R$ we defined $m$-th order mixed difference operator by
$$
\Delta_{\boldsymbol{h}}^{m,e}f(\x):=\left(\prod\limits_{i\in e} \Delta_{h_i}^{m,i} \right)f(\x).
$$

\section{Examples of the construction of the basis functions}
In this section we present examples of the construction of higher order Faber splines. Originally this idea goes to \cite{DU2019}. Here we present examples of the construction of basis functions $s_{2m;j,k}$ for cases $m=2$ and $m=3$.

\subsection{The case $m=2$} The approach in \cite{DU2019} leads to the following coefficients
$$
a_n^{(2)}=\begin{cases}
(-6-4\sqrt{3})(-2-\sqrt{3})^{n-1}+(6+7\sqrt{3}/2)(7+4\sqrt{3})^{n-1} & \text{if} \ \ n\leq 1, \\
(6-4\sqrt{3})(-2+\sqrt{3})^{n-1}+(-6+7\sqrt{3}/2)(7-4\sqrt{3})^{n-1} & \text{if} \ \ n>1,
\end{cases}
$$
or we can compute them also numerically
\begin{align*}
a_0^{(2)}&=4.33,  &a_{\pm 1}^{(2)}&=-0.866,  &a_{\pm 2}^{(2)}&=0.253,  &a_{\pm 3}^{(2)}&=-6.6 \cdot 10^{-2},  &a_{\pm 4}^{(2)}&=1.8\cdot 10^{-2},\\
a_{\pm 5}^{(2)}&=-4.7\cdot 10^{-3},  &a_{\pm 6}^{(2)}&=1.3\cdot 10^{-3},  &a_{\pm 7}^{(2)}&=-3.4\cdot 10^{-4},  &a_{\pm 8}^{(2)}&=9.2\cdot 10^{-5},  &a_{\pm 9}^{(2)}&=-2.4\cdot 10^{-5}.
\end{align*}
Then according to formulas (\ref{b-basis}) we get
$$
s_{4;j,k}(x)= \sum \limits_{n \in \zz} a_n^{(2)}  v_4(2^jx-k-n),
$$
where
$$
v_4(t)=\frac{1}{36}
\begin{cases}
t^3, & 0\leq t \leq 1/2, \\
1-6t+12t^2-7t^3, & 1/2<t \leq 1, \\
-22+63t-57t^2+16t^3, & 1<t\leq 3/2, \\
86-153t+87t^2-16t^3, & 3/2<t\leq 2, \\
-98+123t-51t^2+7t^3, & 2<t\leq 5/2, \\
27-27t+9t^2-t^3, & 5/2<t\leq 3,\\
0, & \text{otherwise}.
\end{cases}
$$

\subsection{The case $m=3$} In this case the polynomial (6.2) in \cite{DU2019} has the following representation
$$
1-518z-11072z^2+41734z^3+170110z^4+41734z^5-11072z^6-518z^7+z^8=0.
$$
There are the following algebraic roots
\begin{align*}
z_0&= 136+13\sqrt{105}-4\sqrt{2265+221\sqrt{105}},  & z_1=\frac{1}{2}\left(-13-\sqrt{105}+\sqrt{2(135+13\sqrt{105})}\right),\\
z_2&=\dfrac{1}{2}\left(-13+\sqrt{105}+\sqrt{2(135-13\sqrt{105})}\right), & z_3=136-13\sqrt{105}-4\sqrt{2265-221\sqrt{105}}, \\
z_4&= 136+13\sqrt{105}+4\sqrt{2265+221\sqrt{105}},  & z_5=\frac{1}{2}\left(-13-\sqrt{105}-\sqrt{2(135+13\sqrt{105})}\right),\\
z_6&=\dfrac{1}{2}\left(-13+\sqrt{105}-\sqrt{2(135-13\sqrt{105})}\right), &  z_7=136-13\sqrt{105}+4\sqrt{2265-221\sqrt{105}}.
\end{align*}
Plugging this into Formula (6.4) in \cite{DU2019} leads to the following numerical values for the coefficients.
\begin{align*}
a_0^{(3)}&=12.251,  &a_{\pm 1}^{(3)}&=-3.765,  &a_{\pm 2}^{(3)}&=1.921,  &a_{\pm 3}^{(3)}&=-0.772 ,  &a_{\pm 4}^{(3)}&=0.343,\\
a_{\pm 5}^{(3)}&=-0.145,  &a_{\pm 6}^{(3)}&=6.3\cdot 10^{-2},  &a_{\pm 7}^{(3)}&=-2.7\cdot 10^{-2},  &a_{\pm 8}^{(3)}&=1.1\cdot 10^{-2},  &a_{\pm 9}^{(3)}&=-5.02\cdot 10^{-3},\\
a_{\pm 10}^{(3)}&=2.1\cdot 10^{-3},  &a_{\pm 11}^{(3)}&=-9.3\cdot 10^{-4},  &a_{\pm 12}^{(3)}&=4.01\cdot 10^{-4},  &a_{\pm 13}^{(3)}&=-1.7\cdot 10^{-4},  &a_{\pm 14}^{(3)}&=7.04\cdot 10^{-5},
\end{align*}
and basis functions $s_{6;j,k}$ are defined as follows
$$
s_{6;j,k}(x)= \sum \limits_{n \in \zz} a_n^{(3)}  v_6(2^jx-k-n),
$$
where
$$
v_6(t)=\frac{1}{7200}
\begin{cases}
t^5, & 0\leq t \leq 1/2, \\
1-10t+40t^2-80t^3+80t^4-31t^5, & 1/2<t \leq 1, \\
-236+1175t-2330t^2+2290t^3-1105t^4+206t^5, & 1<t\leq 3/2, \\
6082-19885t+25750t^2-16430t^3+5135t^4-626t^5, & 3/2<t\leq 2, \\
3(-15914+38225t-36270t^2+16950t^3+3895t^4+352t^5), & 2<t\leq 5/2, \\
3(52836-99275t+73730t^2-27050t^3+4905t^4-352t^5), & 5/2<t\leq 3, \\
-250218+383385t-232950t^2+70230t^3-10515t^4+626t^5, &3<t \leq 7/2, \\
186764-240875t+123770t^2-31690t^3+4045t^4-206t^5, & 7/2<t\leq 4, \\
-55924+62485t-27910t^2+6230t^3-695t^4+31t^5, & 4<t\leq 9/2, \\
3125-3125t+1250t^2-250t^3+25t^4-t^5, & 9/2<t\leq 5,\\
0, & \text{otherwise}.
\end{cases}
$$

\bibliographystyle{siam}

\end{document}